\newcommand{\bc}{\begin{center}}
\newcommand{\ec}{\end{center}}
\newcommand{\be}{\begin{enumerate}}
\newcommand{\ee}{\end{enumerate}}
\newcommand{\beq}{\begin{equation}}
\newcommand{\eeq}{\end{equation}}
\newcommand{\bi}{\begin{itemize}}
\newcommand{\ei}{\end{itemize}}
\newcommand{\bd}{\begin{description}}
\newcommand{\ed}{\end{description}}
\newcommand{\ba}{\begin{array}}
\newcommand{\bea}{\begin{eqnarray*}}
\newcommand{\eea}{\end{eqnarray*}}
\newcommand{\ea}{\end{array}}
\newcommand{\bt}{\begin{tabular}}
\newcommand{\et}{\end{tabular}}
\newcommand{\bmi}{\begin{minipage}}
\newcommand{\emi}{\end{minipage}}
\newtheorem{thm}{Theorem}[section]
\newtheorem{defn}[thm]{Definition}
\newtheorem{lem}[thm]{Lemma}
\newtheorem{pro}[thm]{Proposition}
\newtheorem{cor}[thm]{Corollary}
\newtheorem{algo}[thm]{Algorithm}
\newtheorem{exa}[thm]{Example}
\newtheorem{rem}[thm]{Remark}
\begin{document}

\bc {\bf\large On the generalized distributive set of a finite nearfield}\\[3mm]
{\sc Prudence Djagba  }

\it\small
Department of Mathematical Sciences,
Stellenbosch University,
South Africa\\
\rm e-mail: prudence@aims.ac.za
\ec
 
\normalsize

\quotation{\small {\bf Abstract:} For any nearfield $(R,+, \circ)$, denote by $D(R)$ the set of all distributive elements of $R$. Let $R$ be a finite Dickson nearfield  that arises from Dickson pair $(q,n)$.  For a given pair $(\alpha, \beta) \in R^2$ we  study the generalized distributive set  $ D(\alpha, \beta)= \big \{  \lambda \in R \thickspace \vert \thickspace (\alpha + \beta) \circ \lambda  = \alpha \circ \lambda + \beta \circ \lambda \big  \}$ where \say{$\circ $} is the multiplication of the Dickson nearfield. We find that $  D(\alpha, \beta)$ is not in general a  subfield of the finite field $\mathbb{F}_{q^n}$. In contrast to the situation for $D(R)$, we also find that $D(\alpha, \beta)$ is not in general a subnearfield of $R$. We obtain sufficient conditions on $\alpha, \beta$ for $ D(\alpha, \beta)$ to be a subfield of $\mathbb{F}_{q^n}$ and  derive an algorithm  that tests if $D(\alpha, \beta)$ is a subfield of $\mathbb{F}_{q^n}$ or not. We also  study the notions of $R$-dimension, $R$-basis, seed sets and seed number of $R$-subgroups of the Beidleman near-vector spaces $R^m$ where $m$ is a positive integer. Finally we determine the maximal $R$-dimension of $gen(v_1,v_2)$ for $v_1,v_2 \in R^m$,  where $gen(v_1,v_2)$ is the smallest $R$-subgroup containing the vectors $v_1$ and $v_2$.
  }

\small
{\it Keywords: Dickson nearfields, Beidleman near-vector spaces, $R$-subgroups, generalized distributive set.} \\
\normalsize

\textup{2010} \textit{MSC}: \textup{16Y30;12K05}
\section{Introduction and preliminaries}

 In $1905$  Dickson  wanted to know what structure arises if one axiom in the list of axioms for skewfields was weakened. He found that there  exist \say{nearfields}, which fulfill all
axioms for skewfields except one distributive law. Dickson achieved this by starting with a field and
changing the multiplication into a new operation \cite{dickson1905finite}.  Thirty years later  Zassenhauss classified all the finite nearfields in \cite{zassenhauss1935}. He found that all finite nearfields are either finite Dickson nearfields or one of the seven exceptional types. Since then Dancs \cite{susans1971,susans1972}, Karzel and Ellers \cite{ellerskarzel1964}, Zemmer \cite{zemmer1964} have solved some important problems in this area.

In $1996$ the first notion of near-vector spaces was introduced by Beidleman  \cite{beidleman1966near} in his PhD thesis. This notion generalises and extends the concept of a vector space to  obtain a certain structure which is no longer linear. He used nearring modules in the construction. Later Andr\'e \cite{andre1974lineare} introduced another  notion of near-vector spaces  and used automorphisms in the construction.  Andr\'e's version has been studied in many  papers, and theses, for example \cite{howell2007contributions} while Beidleman near-vector spaces had not since his thesis until recently  the authors of  \cite{djagbahowell18} added to the body of existing work. 

 In \cite{djagbahowell18}, the authors characterized the $R$-subgroups and subspaces of  finite dimensional Beidleman near-vector spaces. In this paper we continue  the work of that paper. We introduce the  notion of $R$-dimension, $R$-basis, seed set and seed number of an $R$-subgroup and find some properties. 

This paper is organized as follows. In subsection $1.1$ we have used the results by Hull and Dobell (\cite{hull1962random}) to explain the finite Dickson construction. In section $2$ we recall some results from \cite{djagbahowell18}  characterizing the $R$-subgroups of $R^m$. We attempt to give sufficient background for the paper  because of the nearfield experts that are new to finite dimensional Beidleman near-vector spaces. In section $3$ we  determine the distributive structure of finite Dickson nearfields. In section $4$ we give an application of the distributive elements to the concept of $R$-subgroups of $R^m$ where $R$ is a finite nearfield and  $m$ is a positive integer.  We evaluate the possible values of $R$-dimension for a given value of seed number and determine  the  seed number of $R^m$ where $m$ is a positive integer such that $ m \leq \vert R \vert+1$.

\subsection{Some remarks on finite nearfields}

Let $S$ be any group with identity  $0$. We will use $S^*$ to denote $S \setminus \{ 0\}.$

\begin{defn}(\cite{meldrum1985near}) Let  $(R,+,\cdot)$ be a triple such that $(R,+)$ is a group,
 $(R,\cdot)$ is a semigroup, and  $a \cdot (b+c)= a \cdot b+a \cdot c$ for all $a,b,c \in R.$ Then $(R,+,\cdot)$ is a (left) nearring. If in addition $ \big ( R^*, \cdot \big )$ is a group then $(R,+, \cdot)$ is called a nearfield. 
\end{defn}

So a nearfield is an algebraic structure similar to a skewfield (sometimes called a division ring) except that it has only one of the two distributive laws.  It is well known that the additive group of a (left) nearfield is abelian, see for instance \cite{pilz2011near}. Throughout this paper we will make use of (left) nearfields. To construct finite Dickson nearfields, we need two concepts: Dickson pair and coupling map.

\begin{defn} (\cite{pilz2011near})
A pair of positive integers  $(q,n)$ is called a Dickson pair if the following conditions are satisfied:
\begin{enumerate}
\item[(i)] $q$ is some power $p^l$ of some prime $p$,
\item[(ii)] each prime divisor of $n$ divides $q-1$,
\item[(iii)] $q \equiv 3$ $ \text{mod } 4$ implies $4$ does not divide $n$.
\end{enumerate}
\end{defn}

\begin{exa} The following are Dickson pairs: $(7,9),(3,2),(4,3),(5,4)$ and $(5,8).$
\end{exa}
Let $(q,n)$ is a Dickson pair  and $k \in \{1, \ldots, n \}.$ We will denote the positive integer  $\frac{q^k-1}{q-1}$ by $[k]_q$. 

 \begin{defn}(\cite{pilz2011near})
Let $R$ be a nearfield and $\textit{Aut} (R,+,\cdot ) $ the set of all automorphisms of $R$. A map $\phi: \thickspace R^* \to  \textit{Aut} (R,+,\cdot ) $  defined by $ n \mapsto \phi_n$
is called a coupling map if for all $n,m \in R^*, \thickspace \phi _n \circ \phi_m= \phi _{ \phi _n (m) \cdot n}.$
\end{defn}

Furthermore, to generate a sequence of numbers which at least appear to be drawn at random from a certain probability distribution (uniform or normal, poisson, or some other), we begin with a positive integer $m$, called the modulus,  a positive integer $x_0,$ called starting value such that $0 \leq x_0 <m$, an integer $a,$ called the multiplier such that $0 <a <m$ and another integer $c$, called the increment such that $0 \leq c < m$. We then define a sequence $\{x_i \}$ of positive integers, each less than $m,$ by means of the congruence relation
\begin{align}
x_i\equiv (a x_{i-1} +c) \mod m.
\label{con}
\end{align}
 For the case $c \neq 0$, an important number theoretic property has been discovered  by Hull and Dobell in $1962$ and is stated as the following:
\begin{thm}(\cite{hull1962random})
The sequence defined by the congruence relation (\ref{con}) has full period $m,$ provided that
\begin{itemize}
\item[(i)] $c$ is relatively prime to $m,$
\item[(ii)] $a \equiv 1  \mod p $ if $p$ is a prime  factor of $m,$
\item[(iii)]  $a \equiv 1  \mod 4 $ if $4$ is a factor of $m.$ 
\end{itemize}
\label{hull}
\end{thm}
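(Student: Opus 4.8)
The statement to be proved is the Hull--Dobell theorem, giving conditions (i)--(iii) sufficient for the linear congruential sequence (\ref{con}) to attain its maximal possible period $m$. The plan is to unfold the recurrence into closed form and then analyse the period one prime at a time. First I would prove by induction on $n$ that
\[
x_n \equiv a^n x_0 + c\,S_n \pmod m, \qquad S_n := \sum_{j=0}^{n-1} a^j,
\]
so that the period starting from $x_0$ is the least $n>0$ with $(a^n-1)x_0 + c\,S_n \equiv 0 \pmod m$. Condition (ii) forces $a \equiv 1 \pmod p$ for every prime $p\mid m$, and since $1 \not\equiv 0 \pmod p$ this gives $p \nmid a$ for all such $p$, hence $\gcd(a,m)=1$; consequently the affine map $f(x)=ax+c$ is a permutation of $\Z/m\Z$. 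Because a permutation decomposes $\Z/m\Z$ into disjoint cycles, it suffices to show that the single orbit through $x_0=0$ has length $m$: that orbit then exhausts all $m$ residues, forcing $f$ to be one $m$-cycle and every starting value to have period $m$. Taking $x_0=0$ reduces the question to finding the least $n>0$ with $c\,S_n \equiv 0 \pmod m$.

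Next I would factor $m = \prod_i p_i^{e_i}$ and invoke the Chinese Remainder Theorem: $c\,S_n \equiv 0 \pmod m$ holds iff $c\,S_n \equiv 0 \pmod{p_i^{e_i}}$ for every $i$, so the period equals $\operatorname{lcm}_i \pi_i$, where $\pi_i$ is the least $n$ with $c\,S_n \equiv 0 \pmod{p_i^{e_i}}$. Condition (i), $\gcd(c,m)=1$, makes $c$ a unit modulo each $p_i^{e_i}$, so it may be cancelled and $\pi_i$ becomes the least $n$ with $p_i^{e_i} \mid S_n$. It then remains to show $\pi_i = p_i^{e_i}$; since the $p_i^{e_i}$ are pairwise coprime, their least common multiple is exactly $\prod_i p_i^{e_i}=m$, giving full period.

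The heart of the argument is the prime-power computation, which I would carry out with the Lifting-the-Exponent lemma. Writing $a = 1+t$ with $p \mid t$ (condition (ii)), the binomial expansion gives $S_n = \frac{(1+t)^n-1}{t} = n + \binom{n}{2}t + \binom{n}{3}t^2 + \cdots \equiv n \pmod p$ (when $a=1$, i.e.\ $t=0$, this reads $S_n=n$ directly). For odd $p$, the lemma yields $v_p(a^n-1)=v_p(a-1)+v_p(n)$, whence $v_p(S_n)=v_p(a^n-1)-v_p(a-1)=v_p(n)$; thus $p^e \mid S_n$ iff $p^e \mid n$, and the least such $n$ is $p^e$. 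For $p=2$ one must split cases: if $e=1$ the requirement is only that $S_n$ be even, and $S_n \equiv n \pmod 2$ already forces the least $n$ to be $2$; if $e\ge 2$, then $4\mid m$ and condition (iii) supplies $a \equiv 1 \pmod 4$, which is precisely the hypothesis under which the even form $v_2(a^n-1)=v_2(a-1)+v_2(n)$ holds, so again $v_2(S_n)=v_2(n)$ and the least $n$ is $2^e$. Here $v_p$ denotes the $p$-adic valuation.

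I expect the delicate point to be exactly this $p=2$ analysis: without condition (iii) the valuation formula acquires the correction term of the even-prime case, and the period modulo $2^e$ collapses below $2^e$; the three hypotheses are engineered so that $v_p(S_n)=v_p(n)$ holds uniformly across all prime powers, and this, together with the separate (if vacuous) treatment of $e=1$ and the reduction $S_n \equiv n \pmod p$, is where the proof would need the most care. Assembling the pieces --- the closed form, the reduction to $x_0=0$ via the permutation property, the CRT splitting, the cancellation of $c$, and the valuation identity $\pi_i=p_i^{e_i}$ --- yields period $\operatorname{lcm}_i p_i^{e_i}=m$, so the generator attains its full period $m$, as claimed.
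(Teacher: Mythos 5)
The paper does not prove this theorem: it is stated with a citation to Hull and Dobell's 1962 article and used as a black box to derive Lemma \ref{lehull}, so there is no internal proof to compare yours against. Judged on its own, your argument is correct and is essentially the standard proof of the Hull--Dobell criterion. The closed form $x_n \equiv a^n x_0 + c S_n \pmod m$, the observation that (ii) makes $a$ a unit modulo $m$ so that $x \mapsto ax+c$ permutes $\Z/m\Z$ and the problem reduces to the length of the orbit of $0$, the splitting into prime powers by the Chinese Remainder Theorem, the cancellation of $c$ via (i), and the valuation identity $v_p(S_n)=v_p(n)$ --- with the $p=2$, $e\ge 2$ case being exactly where (iii) enters --- are all sound. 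One presentational point deserves care: the step ``the period equals $\operatorname{lcm}_i \pi_i$'' is legitimate only because you later establish that the set of $n$ with $p_i^{e_i}\mid S_n$ is precisely the set of multiples of $\pi_i=p_i^{e_i}$ (not merely that $\pi_i$ is its least element); it would be cleaner to state that dependence before invoking the lcm. Hull and Dobell's original argument reaches the same divisibility facts by elementary manipulation of $\sum_{j=0}^{n-1}a^j$ rather than by citing the lifting-the-exponent lemma, but the mathematical content is the same, so your route is a compact modern packaging of their proof rather than a genuinely different one.
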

We now deduce the following well known result as a special case  of Hull and Dobell's theorem.
\begin{lem}(\cite{pilz2011near,wahling1987theorie})
Let $(q,n)$ be a Dickson pair. Then $ \big \lbrace  [k]_q: 1 \leq k \leq n  \big  \rbrace $ forms a finite complete set of different residues  modulo $n.$
\label{lehull}
\end{lem}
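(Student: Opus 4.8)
The plan is to recognize the sequence $\bigl([k]_q\bigr)_k$ as a linear congruential sequence of precisely the form appearing in Theorem~\ref{hull}, and then to verify that the three hypotheses of Hull and Dobell are exactly the three defining conditions of a Dickson pair (once one also exploits that $q$ is forced to be odd when $4 \mid n$).

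First I would record the key algebraic identity. Since $[k]_q = 1 + q + q^2 + \cdots + q^{k-1}$, a one-line computation gives the recurrence $[k]_q = q\,[k-1]_q + 1$, with the convention $[0]_q = 0$. This is exactly the congruence relation (\ref{con}) of Theorem~\ref{hull} with modulus $m = n$, multiplier $a = q$, increment $c = 1$ (note $c = 1 \neq 0$, so the theorem applies), and starting value $x_0 = [0]_q = 0$. Consequently the residues modulo $n$ of $[0]_q, [1]_q, [2]_q, \ldots$ are exactly the terms of this linear congruential sequence, and the assertion that $\{[k]_q : 1 \le k \le n\}$ is a complete set of residues modulo $n$ is equivalent to saying the sequence has full period $n$.

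Next I would check the three conditions of Theorem~\ref{hull} for $(a,c,m) = (q,1,n)$. Condition (i) is immediate, since $c = 1$ is coprime to every $n$. Condition (ii) asks that $q \equiv 1 \pmod{p}$ for every prime $p \mid n$; this is literally part (ii) of the Dickson pair definition, namely that every prime divisor of $n$ divides $q - 1$. Condition (iii) asks that $q \equiv 1 \pmod 4$ whenever $4 \mid n$, and this is the only step requiring an argument rather than a direct match.

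Finally, for condition (iii) I would argue as follows. Suppose $4 \mid n$. Then $2 \mid n$, so by Dickson condition (ii) we get $2 \mid q - 1$, i.e.\ $q$ is odd, whence $q \equiv 1$ or $q \equiv 3 \pmod 4$. But Dickson condition (iii) states that $q \equiv 3 \pmod 4$ forces $4 \nmid n$; taking the contrapositive, $4 \mid n$ rules out $q \equiv 3 \pmod 4$, leaving $q \equiv 1 \pmod 4$. With all three hypotheses verified, Theorem~\ref{hull} yields full period $n$, and reindexing via $[n]_q \equiv [0]_q = 0 \pmod n$ shows that $\{[k]_q : 1 \le k \le n\}$ runs over all residues modulo $n$. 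The only genuinely delicate point is this last verification of condition (iii), where one must combine the parity forced by (ii) with the contrapositive of (iii) to upgrade "$q \not\equiv 3$" to "$q \equiv 1$"; everything else is a direct translation between the two sets of hypotheses.
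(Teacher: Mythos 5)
Your proof is correct and follows essentially the same route as the paper: both recognize $[k]_q$ as the linear congruential sequence of Theorem \ref{hull} with $a=q$, $m=n$, $c=1$ (the paper starts at $x_0=1$, you at $x_0=[0]_q=0$, an immaterial shift). In fact you are more careful than the paper, which applies Hull--Dobell without checking its hypotheses; your verification of condition (iii) — combining the oddness of $q$ forced by Dickson condition (ii) with the contrapositive of Dickson condition (iii) to conclude $q \equiv 1 \pmod 4$ when $4 \mid n$ — fills a step the paper leaves implicit.
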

\begin{proof}
We take $x_0=1, \thickspace a=q, \thickspace m=n$ and $c=1$
into the relation (\ref{con})  and apply  Theorem \ref{hull}. Thus we have the following: $x_0=1, x_1=q+1, x_2=q^2+q+1, \ldots, x_{n-1}= q^{n-1}+ q^{n-2}+ \ldots+1$. Hence the sequence $\{ x_i \}_{i=0, \ldots, n-1}$ has full period $n$ and are all different  residues modulo $n.$ Thus $\{ x_0, \ldots, x_{n-1} \}$  is $ \big \lbrace  [k]_q: 1 \leq k \leq n  \big  \rbrace $. 
 \end{proof}
Every Dickson pair $(q,n)$ gives rise to a finite Dickson nearfield. This is obtained by replacing the usual multiplication \say{$ \cdot$} in the finite field $\mathbb{F}_{q^n}$  of order $q^n$ with a new multiplication \say{$\circ$}. We shall denote the set of Dickson nearfields  arising from the Dickson pair $(q,n)$  by $DN(q,n)$ and  the  Dickson nearfield arising from the Dickson pair $(q,n)$ with generator $g$ by $DN_g(q,n)$. Furthermore in \cite{pilz2011near}   the new multiplication is constructed as follows: 

Let $g$ be such that $ \mathbb{F}_{q^n}^*= \langle g \rangle$ and $H = \langle g^n \rangle $. The quotient group is given by
 \begin{align*}
  \mathbb{F}_{q^n}^* / H & = \big \lbrace  g^{[1]_q}H, g^{[2]_q}H,\ldots, g^{[n]_q}H \big \rbrace \\
  & =\big \lbrace  H, gH,\ldots, g^{n-1}H \big \rbrace \thickspace \mbox{by Lemma \ref{lehull}} .
 \end{align*}
 The  coupling map $\phi$ is defined as
 \begin{align*}
\begin{array}{lcl}
\mathbb{F}_{q^n}^*& \to & \textit{Aut}(\mathbb{F}_{q^n},+,\cdot) \\
\alpha  & \mapsto & \phi_{\alpha}= \varphi^k(\alpha)  
 \end{array} 
  \end{align*} where  $\varphi$ is the Frobenius automorphism of $\mathbb{F}_{q^n}$ and $ k$ is a positive integer $( k \in \{1,\ldots,n \})$  such that $ \alpha \in g^{[k]_q}H  $. Let $\alpha, \beta  \in \mathbb{F}_{q^n},$ the  we have 
  \begin{align*}
\alpha \circ  \beta &= \left\{
\begin{array}{lcl}
\alpha \cdot \phi_{\alpha}(\beta) & \text{if} &  \alpha\ \neq 0  \\ 
0 & \text{if} & \alpha=0
\end{array}\right.  \\
&=
\begin{cases}
\alpha \cdot  \varphi^k (\beta) \thickspace  \thickspace \text{if $ \alpha \in g^{[k]_q}H $} \\
 0     \quad \quad \quad \quad  \text{if}  \thickspace     \alpha=0 
\end{cases} \\
  &= 
\begin{cases}
\alpha	 \cdot \beta^{q^k}   \thickspace \thickspace  \text{if $ \alpha \in g^{[k]_q} H  $} \\
0  \quad \quad \quad  \text{if}  \thickspace    \alpha=0  
\end{cases} 
\end{align*}
for $k \in \{ 1,\ldots,n \}$. Thus $DN_g(q,n):=\big ( \mathbb{F}_{q^n}, +, \circ \big )$ is the finite Dickson nearfield constructed by taking $H = \langle g^n \rangle$. By taking all Dickson pairs, all finite Dickson nearfields arise in this way.  Furthermore we deduce the following.
\begin{lem}
Let $(q,n)$ be a Dickson pair with $q=p^l$ for some prime $p$ and positive integers $l,n$. Let $g$ be a generator of $\mathbb{F}_{q^n}^*$ and  $R$  the finite nearfield constructed with $H = \big < g^n \big >.$ If  $n$ divides $q-1$ then $g^{[i]_q}H=g^iH$ for all $i=1,\ldots,n$.
\label{lenn}
\end{lem}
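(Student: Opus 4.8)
The plan is to translate the coset equality $g^{[i]_q}H = g^i H$ into a congruence modulo $n$, and then to verify that congruence directly from the hypothesis $n \mid q-1$. The point is that membership in the same coset of $H$ is governed entirely by exponents taken modulo the index of $H$, and the definition of $[i]_q$ collapses nicely modulo $q-1$.

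First I would record that $H = \langle g^n \rangle$ has index $n$ in $\mathbb{F}_{q^n}^*$. Indeed, the construction preceding the lemma exhibits $\mathbb{F}_{q^n}^*/H = \{H, gH, \ldots, g^{n-1}H\}$, a set of $n$ distinct cosets. Since $\mathbb{F}_{q^n}^*$ is cyclic with generator $g$, the quotient $\mathbb{F}_{q^n}^*/H$ is cyclic of order $n$, generated by the image $gH$, so $gH$ has order exactly $n$. Hence, for any integers $a,b$, one has $g^a H = g^b H$ if and only if $a \equiv b \pmod{n}$.

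Second I would compute $[i]_q$ modulo $n$. By definition $[i]_q = \frac{q^i - 1}{q-1} = 1 + q + q^2 + \cdots + q^{i-1}$. The hypothesis $n \mid q-1$ says $q \equiv 1 \pmod{n}$, whence $q^j \equiv 1 \pmod{n}$ for every $j \geq 0$, and therefore $[i]_q \equiv \sum_{j=0}^{i-1} 1 = i \pmod{n}$. Combining the two observations, $[i]_q \equiv i \pmod{n}$ gives $g^{[i]_q}H = g^i H$ for each $i = 1, \ldots, n$, which is the claim.

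The only step that needs care is the first one, namely pinning down that the order of $gH$ in the quotient is exactly $n$ (equivalently, that $[\mathbb{F}_{q^n}^* : H] = n$), since it is this fact that makes coset equality coincide with congruence modulo $n$. This is already encoded in the coset decomposition supplied by the construction, and in Lemma \ref{lehull}, so it presents no real obstacle; the reduction of $[i]_q$ modulo $n$ is then immediate. I expect the whole argument to be short, with all of its substance concentrated in the elementary congruence $q \equiv 1 \pmod{n}$.
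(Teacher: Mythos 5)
Your proof is correct and follows essentially the same route as the paper: both arguments reduce the claim to the congruence $[i]_q \equiv i \pmod{n}$, derived from $q \equiv 1 \pmod{n}$ (the paper by induction on $i$, you by summing the geometric series directly, a negligible difference). Your explicit justification that $g^aH = g^bH$ if and only if $a \equiv b \pmod{n}$ is left implicit in the paper but is a welcome addition.
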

\begin{proof}
Suppose that $n$ divides $q-1$. We need to show that $[i]_q \equiv i \mod n$ and we  proceed by induction. For $i=1.$ We have $[1]_q \equiv 1 \mod n$. Suppose that $[i]_q \equiv i \mod n$. Since  $n$ divides $q-1$ then  $q^i \equiv 1 \mod n.$ So $[i+1]_q \equiv i+1 \mod n.$
\end{proof}
Note that the conclusion of Lemma \ref{lenn} is not always true for any Dickson pair, for example, take $(q,n)=(7,9)$. We have $[2]_7=8$ but $8 $ is not congruent to $2$ modulo $9.$ The use of Theorem \ref{hull} taken from  \cite{hull1962random} allows us to give an alternate proof to that found in \cite{wahling1987theorie} of the following lemma.
 
 \begin{lem}(\cite{wahling1987theorie})
 Let $(q,n)$ be a Dickson pair with $q=p^l$ for some prime $p$ and positive integers $l,n$. Let $g$ be a generator of $\mathbb{F}_{q^n}^*$ and  $R$  the finite nearfield constructed with $H = \big < g^n \big >.$ Then $n$ divides $[n]_q$ and $g^{[n]_q}H=H$.
 \end{lem}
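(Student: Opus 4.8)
The plan is to dispatch the second assertion as an immediate consequence of the first, and then to prove $n \mid [n]_q$ using Lemma \ref{lehull}. For the reduction: since $H = \langle g^n \rangle$, the equality $g^{[n]_q}H = H$ holds precisely when $g^{[n]_q} \in H$. If we know that $n \mid [n]_q$, say $[n]_q = n t$ for a positive integer $t$, then $g^{[n]_q} = (g^n)^t \in \langle g^n \rangle = H$, so $g^{[n]_q}H = H$ follows at once. Thus the whole lemma rests on the divisibility $n \mid [n]_q$.

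For the divisibility I would argue as follows. By Lemma \ref{lehull} the values $[1]_q, [2]_q, \ldots, [n]_q$ form a complete set of distinct residues modulo $n$; in particular exactly one index $k_0 \in \{1, \ldots, n\}$ satisfies $[k_0]_q \equiv 0 \pmod n$, and it suffices to show $k_0 = n$. The key tool is the recurrence
\begin{align*}
[k+1]_q = 1 + q\,[k]_q,
\end{align*}
which is exactly the congruence relation $x_i \equiv q x_{i-1} + 1$ underlying the proof of Lemma \ref{lehull} (and is verified directly from $[k]_q = \frac{q^k - 1}{q-1}$). Suppose, for contradiction, that $k_0 \le n - 1$. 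Then $[k_0 + 1]_q = 1 + q\,[k_0]_q \equiv 1 \pmod n$, while also $[1]_q = 1$. Since $k_0 + 1$ and $1$ are two distinct elements of $\{1, \ldots, n\}$ yielding the same residue modulo $n$, this contradicts the distinctness guaranteed by Lemma \ref{lehull}. Hence $k_0 = n$, so $[n]_q \equiv 0 \pmod n$, that is, $n \mid [n]_q$.

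The only delicate point is the bookkeeping of indices in the injectivity step: one must check that $k_0 + 1$ still lies in the admissible range $\{1, \ldots, n\}$ (which holds precisely because we assumed $k_0 \le n - 1$), so that the collision $[k_0+1]_q \equiv [1]_q$ genuinely violates Lemma \ref{lehull}. Everything else is a routine use of the recurrence. An alternative route, which I would keep in reserve, avoids singling out $k_0$: the full period of the sequence gives $[n+1]_q \equiv [1]_q \pmod n$, hence $q\,[n]_q \equiv 0 \pmod n$, and since $q = p^l$ while every prime divisor of $n$ divides $q-1$ (so $p \nmid n$ and $\gcd(q,n) = 1$), one cancels $q$ to obtain $n \mid [n]_q$. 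Both arguments then feed the reduction of the first paragraph to complete the proof.
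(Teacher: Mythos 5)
Your proof is correct and rests on the same foundation as the paper's, namely the Hull--Dobell full-period result packaged as Lemma \ref{lehull}. The only difference is in how the divisibility $n \mid [n]_q$ is extracted: the paper uses periodicity directly, writing $x_n \equiv x_0 = 1 \pmod n$ so that the recurrence gives $q\,[n]_q \equiv 0 \pmod n$, and then cancels $q$ using $\gcd(q,n)=1$ (which follows from the Dickson pair condition that every prime divisor of $n$ divides $q-1$); this is precisely the ``alternative route'' you keep in reserve. Your primary argument instead uses the distinctness of the residues $[1]_q,\ldots,[n]_q$ modulo $n$ to rule out $[k_0]_q \equiv 0$ for $k_0 \le n-1$ via the collision $[k_0+1]_q \equiv [1]_q$. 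That variant is clean and has the minor advantage of not needing to invoke $\gcd(q,n)=1$ a second time, since the coprimality is already absorbed into Lemma \ref{lehull}; the paper's version is shorter but requires the cancellation step (and, as a side note, your justification of that step is the accurate one --- the relevant hypothesis is that the prime divisors of $n$ divide $q-1$, not $q$). The reduction of $g^{[n]_q}H = H$ to the divisibility is identical in both.
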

\begin{proof}
Suppose that $(q,n)$ is a  Dickson pair.
Let us consider $x_0=1, a=q, m=n$ and $c=1.$ By Theorem \ref{hull}  the period of the sequence $\{x_i \}$ is exactly $n$. Thus $1=x_0$ and is equivalent to $x_n$. But we have $x_{n-1}=[n]_q$ which satisfies the recurrence 
\begin{align*}
 x_n \equiv q x_{n-1}+1 \mod n  \Leftrightarrow  1 \equiv q x_{n-1}+1 \mod n  \Leftrightarrow q x_{n-1} \equiv 0 \mod n.
\end{align*}
Since every prime divisor of $n$ divides $q$ then $\textsc{gcd} (q,n)=1$. Thus 
\begin{align*}
q x_{n-1} \equiv 0 \mod n \Leftrightarrow x_{n-1} \equiv 0 \mod n.
\end{align*}
Since  $n$ divides $[n]_q$ then $g^{[n]_q}H=H$.
\end{proof}
Thirty years after Dickson’s work,  Zassenhauss fundamentally determined all finite nearfields.
\begin{thm}\cite{zassenhauss1935}
A finite nearfield is either a finite Dickson nearfield or it is one of
the $7$ exceptional nearfields of order $5^2 , 7^2 , 11^2 , 23^2 , 29^2, 59^ 2$ .
\end{thm}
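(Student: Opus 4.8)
The plan is to recast the nearfield axioms as a statement about a group of linear transformations and then lean on the classification of the finite groups that can act in the required way; the Dickson nearfields will correspond to the ``generic'' such groups and the seven exceptions to a short finite list of special groups. Concretely, I would first pin down the additive structure. Since $(R,+)$ is abelian and left multiplication by a fixed element is additive (this is the one distributive law that holds), a short computation shows the additive order of $1$ is a prime $p$ --- otherwise two nonzero elements would multiply to $0$, contradicting that $(R^*,\circ)$ is a group --- and that $x\circ(\underbrace{1+\dots+1}_{p})=p\cdot x=0$ for every $x$. Hence $(R,+)$ is an elementary abelian $p$-group, $|R|=p^{m}$, and $R$ becomes an $m$-dimensional vector space over $\mathbb{F}_p$.

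Next I would realise $R^*$ as a sharply transitive linear group. For $a\in R^*$ the map $L_a\colon x\mapsto a\circ x$ lies in $\mathrm{GL}_m(\mathbb{F}_p)$, and $a\mapsto L_a$ is an injective homomorphism (if $L_a=L_b$, evaluating at $1$ gives $a=b$). Given nonzero $x,y$, the element $a=y\circ x^{-1}$ satisfies $L_a(x)=y$, so the image acts transitively on the $p^m-1$ nonzero vectors; since $|R^*|=p^m-1$, this action is in fact regular, equivalently \emph{fixed-point-free}: no $L_a$ with $a\neq 1$ fixes a nonzero vector. Conversely, any regular linear group of this kind yields a nearfield multiplication, so classifying finite nearfields is exactly classifying fixed-point-free (Frobenius-complement) linear groups.

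The heart of the proof --- and the main obstacle --- is this group-theoretic classification. Here I would invoke the Sylow-structure theorem for fixed-point-free groups: every Sylow subgroup must be cyclic or generalized quaternion. A detailed analysis then shows that, apart from finitely many exceptions, such a group is a metacyclic $Z$-group, while the exceptional groups are built from $\mathrm{SL}_2(3)$ and $\mathrm{SL}_2(5)$ together with cyclic factors and small extensions. Carrying out this classification, and checking case by case which of these abstract groups genuinely admit a regular action on the nonzero vectors of some $\mathbb{F}_p^{m}$, is the substantial technical part of the argument.

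Finally I would translate the two cases back into nearfields. In the metacyclic case the cyclic normal subgroup is the multiplicative group of a subfield, the quotient acts through a power of the Frobenius map, and unwinding this recovers precisely the coupling-map construction attached to a Dickson pair $(q,n)$ described earlier; these are the Dickson nearfields. The exceptional complements survive only for the parameters giving orders $5^2,7^2,11^2$ (with two nonisomorphic nearfields at order $11^2$), $23^2,29^2$ and $59^2$; each determines a nearfield multiplication on the corresponding field, and verifying that these seven structures are genuine nearfields, pairwise nonisomorphic, and non-Dickson completes the classification.
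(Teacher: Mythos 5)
The paper does not prove this statement at all: it is quoted verbatim from Zassenhaus's 1935 paper and used as a black box, so there is no in-paper argument to compare yours against. Judged on its own terms, your proposal is a faithful roadmap of the standard (essentially Zassenhaus's own) proof: the reduction of the nearfield axioms to a sharply transitive, fixed-point-free subgroup of $\mathrm{GL}_m(\mathbb{F}_p)$ is correct and cleanly argued (the additive group is elementary abelian because left multiplications are additive and $R^*$ is a group, $a\mapsto L_a$ is an injective homomorphism, and regularity on nonzero vectors follows from a counting argument), and the final translation back --- metacyclic complements giving the coupling-map/Dickson construction, the exceptional complements built from $\mathrm{SL}_2(3)$ and $\mathrm{SL}_2(5)$ surviving only at orders $5^2,7^2,11^2$ (twice), $23^2,29^2,59^2$ --- is the correct shape of the conclusion.

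The genuine gap is that the entire mathematical content of the theorem sits inside the step you label as ``the heart of the proof'' and then skip: the classification of finite fixed-point-free linear groups. Saying that every Sylow subgroup is cyclic or generalized quaternion is a correct first observation, but getting from there to ``metacyclic $Z$-group or one of finitely many exceptions involving $\mathrm{SL}_2(3)$ and $\mathrm{SL}_2(5)$,'' and then determining exactly which of those abstract groups admit a fixed-point-free representation over which $\mathbb{F}_p$ (this is where the specific primes $5,7,11,23,29,59$ and the exponent $2$ come from, and where one must also verify that exactly two nonisomorphic nearfields occur at order $11^2$), is a long case analysis that you invoke rather than perform. As written, the proposal assumes the theorem it sets out to prove; it is an accurate summary of the literature proof, not a proof. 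If the intent is only to explain why the cited result is true, the outline is sound, but the central classification would need to be carried out (or explicitly cited as an external input, as the paper itself does) for the argument to be complete.
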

Note that there exist two exceptional nearfields of order $11^2$ (see \cite{zassenhauss1935} for more details).
\subsection{Beidleman near-vector spaces}

 The concept of a ring module can also be  extended to a more general concept called  a nearring module where the set of scalars is taken to be a nearring.
\begin{defn}
An additive group $(M,+)$ is called a (right) nearring module over a (left) nearring $R$ if there exists a mapping,
\begin{align*}
\eta: \thickspace & M \times R \to M \\
& (m,r) \to mr
\end{align*} such that $m(r_1+r_2)=mr_1+mr_2$ and $m(r_1r_2)= (mr_1)r_2$ for all $r_1,r_2 \in R$ and $m \in M.$

We write $M_R$ to denote that $M$ is a  (right) nearring module over a (left) nearring  $R$.
\end{defn}
\begin{defn}(\cite{beidleman1966near})
A subset $H$ of a nearring module $M_R$ is called an $R$-subgroup if  $H$ is a subgroup of $(M,+)$ and  $HR= \lbrace hr \vert h \in H, r \in R \rbrace \subseteq H. $
\end{defn}
\begin{defn} (\cite{beidleman1966near})
A nearring module $M_R$ is said to be irreducible if $M_R$ contains no proper $R$-subgroups. In other words, the only $R$-subgroups of $M_R$ are $M_R$ and $\lbrace 0 \rbrace.$
\end{defn}
\begin{defn} (\cite{beidleman1966near}) Let $M_R$ be a nearring module.
$ N  $ is a submodule  of $M_R$ if 
\begin{enumerate}
\item[(i)] $ (N,+)$ is a normal subgroup of $(M,+),$
\item[(ii)] $(m+n)r-mr \in N$ for all $m \in M, n \in N$ and $r \in R.$
\end{enumerate}
\end{defn}

\begin{pro}(\cite{beidleman1966near})
Let $N$ be a submodule of $M_R.$ Then $N$ is an $R$-subgroup of $M_R.$
\label{pro}
\end{pro}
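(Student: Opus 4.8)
The plan is to verify the two defining conditions of an $R$-subgroup directly from the two conditions defining a submodule. Recall that $H \subseteq M$ is an $R$-subgroup precisely when (a) $(H,+)$ is a subgroup of $(M,+)$ and (b) $HR \subseteq H$. Condition (a) comes essentially for free: part (i) of the definition of a submodule asserts that $(N,+)$ is a \emph{normal} subgroup of $(M,+)$, which is in particular a subgroup, so (a) holds at once and normality is simply stronger than what we need here.

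The real content is the absorption property (b), namely $nr \in N$ for every $n \in N$ and $r \in R$. Here I would exploit condition (ii), $(m+n)r - mr \in N$, by specializing the free variable $m$ to the additive identity $0$ of $M$. This yields $(0+n)r - 0\cdot r \in N$, that is $nr - 0\cdot r \in N$. To finish I would dispose of the stray term $0 \cdot r$: using the module axiom $m(r_1 + r_2) = mr_1 + mr_2$ one first obtains $m \cdot 0 = 0$ for all $m$, and in the present (zero-symmetric) setting one also has $0 \cdot r = 0$ — for the Dickson nearfields of this paper this is immediate, since the multiplication is defined so that $0 \circ \beta = 0$. Hence $nr - 0 = nr \in N$, and since $(N,+)$ is a subgroup this gives $NR \subseteq N$. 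Combined with the subgroup property, $N$ is an $R$-subgroup.

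The step I expect to require the most care is the treatment of the term $0 \cdot r$. The substitution $m = 0$ in (ii) is the natural idea, but it delivers $nr$ cleanly only once one knows the left-zero identity $0 \cdot r = 0$, which is not forced by the two module axioms alone and must be justified from zero-symmetry (automatic for the nearfields considered here, by construction of $\circ$). Everything else is a routine unwinding of the definitions, and no normality beyond the bare subgroup property is ever used.
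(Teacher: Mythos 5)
The paper states this proposition without proof, merely citing Beidleman's thesis, so there is no in-paper argument to compare against; your proof is the standard one and is correct. You have also put your finger on the only delicate point: substituting $m=0$ into condition (ii) gives $nr - 0\cdot r \in N$, and to conclude $nr\in N$ one needs $0_M\cdot r = 0_M$ for the zero of the \emph{module} (not merely $0_R\circ\beta=0$ in the nearfield, which is a different statement unless $M=R_R$). That identity is genuinely not a formal consequence of the two module axioms listed in the paper, but it does hold for every module actually used here ($R_R$, and $R^m$ with componentwise action by a zero-symmetric nearfield), and it is part of the standing conventions in Beidleman's setting, so your argument goes through.
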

 Note that the converse of this proposition is not true in general. In his   thesis Beidleman gives a counter example. 
\begin{thm}(\cite{beidleman1966near})
Let $R$ be a nearring that contains the  identity element $1 \neq 0.$ $R$ is a nearfield if and only if $R$ contains no proper $R$-subgroups.
\label{irre}
\end{thm}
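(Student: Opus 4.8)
The plan is to read the statement as asserting that $R$, regarded as the module $R_R$ over itself, is irreducible exactly when it is a nearfield; here the relevant $R$-subgroups are the additive subgroups $H$ of $(R,+)$ satisfying $HR \subseteq H$. The engine of both directions will be the observation that for each $a \in R$ the set $aR = \{ a \cdot r \mid r \in R \}$ is itself an $R$-subgroup. Indeed, $aR$ is closed under addition because the single (left) distributive law gives $a\cdot r_1 + a \cdot r_2 = a \cdot (r_1+r_2)$, it is closed under negation because $a \cdot 0 = 0$ forces $a\cdot(-r) = -(a\cdot r)$, and it absorbs right multiplication because associativity of $\cdot$ gives $(a\cdot r)\cdot s = a\cdot(r\cdot s) \in aR$. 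I would establish this small fact first, as it is where the nearring axioms do the real work.

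For the forward implication I would assume $R$ is a nearfield and take any $R$-subgroup $H \neq \{0\}$. Choosing $0 \neq h \in H$, the inverse $h^{-1}$ exists in $R^*$, so $1 = h\cdot h^{-1} \in HR \subseteq H$; since $1$ is the identity, $R = 1 \cdot R \subseteq HR \subseteq H$, whence $H = R$. Thus $R$ has no proper $R$-subgroups.

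For the converse I would assume $R_R$ has no proper $R$-subgroups and verify the one missing nearfield axiom, namely that $(R^*, \cdot)$ is a group. Given $a \neq 0$, the $R$-subgroup $aR$ contains $a = a\cdot 1 \neq 0$, so by irreducibility $aR = R$; in particular $1 \in aR$, producing $b$ with $a\cdot b = 1$, a right inverse for $a$. The remaining work is to upgrade this to a genuine group structure: since $a \cdot b = 1 \neq 0$ we have $b \neq 0$, so $b$ likewise has a right inverse $c$ with $b\cdot c = 1$, and then associativity yields $a = a\cdot 1 = a\cdot(b\cdot c) = (a\cdot b)\cdot c = c$, so that $b\cdot a = b\cdot c = 1$ and $b$ is a two-sided inverse. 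Closure of $R^*$ follows by excluding zero divisors: if $a \cdot b = 0$ with $a \neq 0$, multiplying on the left by $a^{-1}$ and using $a^{-1}\cdot 0 = 0$ gives $b = 0$.

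The main obstacle I anticipate is not any single hard computation but getting the bootstrap in the converse exactly right: irreducibility only hands us right inverses through the surjectivity $aR = R$, and turning these into two-sided inverses together with the no-zero-divisor property is the step that actually pins down the group $(R^*,\cdot)$. I would also be careful that every use of ``$1$'' is legitimately two-sided and that the verification of $aR$ as an $R$-subgroup never secretly invokes the missing (right) distributive law.
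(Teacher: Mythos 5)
Your proof is correct: the key observation that $aR$ is an $R$-subgroup uses only the left distributive law and associativity, the forward direction via $1=h\cdot h^{-1}\in H$ is sound, and the bootstrap from right inverses to two-sided inverses and the exclusion of zero divisors correctly completes the verification that $(R^*,\cdot)$ is a group. The paper itself states this theorem as a cited result from Beidleman's thesis and gives no proof, so there is nothing to compare against; your argument is the standard one and fills that gap correctly.
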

\begin{rem}
Let $R$ be a nearfield. By Theorem \ref{irre}, $R_R$  is an irreducible $R$-module. Thus $R$ contains no proper $R$-subgroups, which means that $R$ contains only $\{ 0 \}$ and $R$ as submodules of $R_R.$
\end{rem}
In the following, let $\lbrace M_i  \rbrace_{ i \in I} $  be a collection of submodules  of the nearring module $M_R$.
\begin{defn}
Suppose that $M_R= \sum _{i \in I} M_i.$ Let $m \in M_R.$ Then $m$ is called sum of elements of the submodules $M_i$ if there exists $m_i \in M_i$ for all $i \in I$ such that $m= \sum _{i \in I}m_i.$
\end{defn}
\begin{defn}(\cite{beidleman1966near})
  $M_R$ is said to be a direct sum of the submodules $\lbrace M_i  \rbrace_{ i \in I} $ if  the additive group $(M,+)$ is a direct sum of the normal subgroups $ (M_i,+),$   $ i \in I $. In this case we write $M_R= \bigoplus _{i \in I} M_i.$
\end{defn}

\begin{pro}(\cite{beidleman1966near}) Suppose that
$M_R= \sum _ {i \in I} M_i$. Then every element of $M_R$ has a unique representation as a  sum of elements of  the submodules $M_i$ if and only if $ M_k \cap \sum _{i \in I, i \ne k} M_i= \lbrace 0 \rbrace$ for all $k.$
\end{pro}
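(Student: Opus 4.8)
The plan is to establish both implications directly in the additive group $(M,+)$, using only that $M_R = \sum_{i \in I} M_i$ and that each $M_i$ is a normal subgroup of $(M,+)$. I would adopt the standard convention that in any sum $\sum_{i \in I} m_i$ with $m_i \in M_i$ only finitely many terms are nonzero, and abbreviate $S_k = \sum_{i \in I,\, i \neq k} M_i$, so that the asserted condition reads $M_k \cap S_k = \lbrace 0 \rbrace$ for every $k$.

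For the direction assuming unique representations, I would fix $k$ and take $x \in M_k \cap S_k$. Regarded as an element of $M_k$, the element $x$ has the representation whose $k$-th term equals $x$ and all of whose other terms vanish; regarded as an element of $S_k$, it has a representation $x = \sum_{i \neq k} n_i$ with $n_i \in M_i$, whose $k$-th term vanishes. Uniqueness forces these two representations to coincide termwise, and comparing the $k$-th terms gives $x = 0$; hence $M_k \cap S_k = \lbrace 0 \rbrace$ for all $k$.

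For the converse I would first record the commuting lemma that legitimises termwise comparison when $(M,+)$ is not abelian. If $a \in M_i$ and $b \in M_j$ with $i \neq j$, then $a + b - a - b$ lies in $M_j$ since $a + b - a \in M_j$ by normality of $M_j$, and it lies in $M_i$ since $b - a - b \in M_i$ by normality of $M_i$; as $M_i \subseteq S_j$, this gives $a + b - a - b \in M_j \cap S_j = \lbrace 0 \rbrace$, so $a + b = b + a$. Thus elements of distinct submodules commute and the finitely many nonzero terms of any sum $\sum_i m_i$ may be reordered freely.

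Granting this, I would suppose $\sum_i m_i = \sum_i m_i'$ with $m_i, m_i' \in M_i$, set $n_i = m_i - m_i' \in M_i$, and rearrange to get $\sum_i n_i = 0$; then for each fixed $k$ I would transpose the terms with index $\neq k$ to obtain $n_k = -\sum_{i \neq k} n_i \in M_k \cap S_k = \lbrace 0 \rbrace$, so that $n_k = 0$ and $m_k = m_k'$ for every $k$. The one genuine obstacle is the possible non-commutativity of $(M,+)$ for a general nearring module, which is exactly what the commuting lemma disposes of; in the nearfield setting of this paper $(M,+)$ is abelian, so that step is automatic and the argument reduces to the familiar one for modules over a field.
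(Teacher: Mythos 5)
Your proof is correct. The paper states this proposition without proof (it is quoted from Beidleman's thesis), so there is no in-paper argument to compare against; your argument is the standard one, and you correctly isolate and dispose of the only delicate point --- possible non-commutativity of $(M,+)$ --- by deriving the commutativity of elements from distinct submodules from normality together with the trivial-intersection hypothesis, which is consistent with the commutation statement the paper records separately for direct sums.
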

\begin{pro}(\cite{beidleman1966near})
 $M_R = \bigoplus _{i \in I} M_i$ implies that $M_R= \sum_{i \in I } M_i$ and  $m_i+ m_j=m_j+m_i$ for all $m_i \in M_i, m_j \in M_j$ such that $i \neq j$. 
\label{rp}
\end{pro}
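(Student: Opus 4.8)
The plan is to read off the first assertion directly from the definition of a direct sum and to reserve the real work for the commutativity claim. By definition, $M_R=\bigoplus_{i\in I}M_i$ means that the additive group $(M,+)$ is the (internal) direct sum of the normal subgroups $(M_i,+)$. In particular every element of $M$ is a sum of elements drawn from the $M_i$, so $M_R=\sum_{i\in I}M_i$ is immediate and requires no computation. Note that we may \emph{not} assume $(M,+)$ is abelian here---that is exactly what the second half of the statement partially remedies---so the commutativity of elements coming from distinct summands genuinely has to be proved.

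For the second assertion, fix $i\neq j$ and take $m_i\in M_i$ and $m_j\in M_j$. The strategy is the classical commutator argument: I will show that the commutator $m_i+m_j-m_i-m_j$ lies in $M_i\cap M_j$ and then argue that this intersection is trivial. Writing the commutator as $m_i+(m_j-m_i-m_j)$ and using that $m_j+m_i-m_j\in M_i$ by normality of $M_i$ (so that its negative $m_j-m_i-m_j$ also lies in $M_i$), I get that the commutator belongs to $M_i$. Regrouping the same expression as $(m_i+m_j-m_i)-m_j$ and using normality of $M_j$ to place $m_i+m_j-m_i\in M_j$, I get that it also belongs to $M_j$.

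It then remains to see that $M_i\cap M_j=\{0\}$. This follows from the direct-sum hypothesis via the preceding proposition on unique representation: the direct sum condition yields $M_i\cap\sum_{k\neq i}M_k=\{0\}$, and since $M_j\subseteq\sum_{k\neq i}M_k$ we obtain $M_i\cap M_j=\{0\}$. Hence the commutator $m_i+m_j-m_i-m_j$ equals $0$, i.e. $m_i+m_j=m_j+m_i$, as required.

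I expect the only delicate point to be bookkeeping in the non-abelian additive notation: correctly identifying $m_j-m_i-m_j$ as the negative of the conjugate $m_j+m_i-m_j$ (rather than as a reordering that would silently assume commutativity), and being explicit that normality is invoked twice, once for each summand. Everything else is a direct appeal to the definitions and to the unique-representation proposition.
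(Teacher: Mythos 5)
Your argument is correct. Note that the paper does not actually supply a proof of this proposition---it is quoted from Beidleman's thesis---so there is nothing internal to compare against; what you give is the standard commutator argument for internal direct sums of normal subgroups, and every step checks out: the two normality computations placing $m_i+m_j-m_i-m_j$ in $M_i$ and in $M_j$ are carried out without any hidden appeal to commutativity, and the triviality of $M_i\cap M_j$ follows from $M_j\subseteq\sum_{k\neq i}M_k$ together with the condition $M_i\cap\sum_{k\neq i}M_k=\{0\}$ built into the notion of internal direct sum (equivalently, via the unique-representation proposition stated just before). Your care in reading $m_j-m_i-m_j$ as the negative of the conjugate $m_j+m_i-m_j$, rather than as a silent reordering, is exactly the point where a sloppier write-up would beg the question.
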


\begin{lem}(\cite{beidleman1966near}) Let $M_R = \bigoplus _{i \in I} M_i$ where   $M_i$ is a submodule of $M_R.$ If $m =\sum_{i \in I} m_i$ where $m_i \in M_i$ and $r \in R$ then
\begin{align*}
mr= \big ( \sum_{i \in I} m_i \big ) r= \sum_{i \in I}( m_ir).
\end{align*}
\label{lemm}
\end{lem}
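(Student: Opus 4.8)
The plan is to prove this one component at a time, exploiting the uniqueness of the direct-sum representation. The essential difficulty to keep in mind is that $R$ is only a \emph{left} nearring, so the left distributive law $(a+b)r = ar + br$ is \emph{not} available in $M_R$; indeed, this lemma is precisely a controlled instance of that failed law, and it holds only because of the submodule and direct-sum hypotheses. Hence I cannot simply distribute $r$ across $\sum_{i\in I} m_i$. Instead I would fix an index $j \in I$ and show that the $M_j$-component of $mr$, in the unique representation of $mr \in M_R = \bigoplus_{i\in I}M_i$, is exactly $m_j r$; summing these components over $j$ then yields $mr = \sum_{i\in I} m_i r$.

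For the component computation, fix $j$ and write $m = m_j + \sum_{i\neq j} m_i$, where reordering the summands so that $m_j$ comes first is justified by Proposition \ref{rp}, since summands lying in distinct submodules commute. Let $m_{i_1}, \dots, m_{i_t}$ enumerate the nonzero summands with $i_s \neq j$, and set $b_0 = m_j$ and $b_s = b_{s-1} + m_{i_s}$, so that $b_t = m$. For each $s$, applying condition (ii) of the definition of a submodule to $N = M_{i_s}$ with the element $b_{s-1} \in M$ and $m_{i_s}\in M_{i_s}$ gives $b_s r - b_{s-1} r = (b_{s-1} + m_{i_s})r - b_{s-1}r \in M_{i_s} \subseteq \sum_{i\neq j} M_i$. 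Telescoping these increments — summing them in the appropriate order, since $(M,+)$ need not be abelian — yields $mr - m_j r = b_t r - b_0 r \in \sum_{i\neq j} M_i$.

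It remains to read off the component. By Proposition \ref{pro} each $M_j$ is an $R$-subgroup, so $m_j r \in M_j$; moreover $mr = (mr - m_j r) + m_j r$, which exhibits $mr$ as a sum of an element of $\sum_{i\neq j} M_i$ and an element of $M_j$. Since $M_R = M_j \oplus \big(\sum_{i\neq j} M_i\big)$, the uniqueness of such a representation forces the $M_j$-component of $mr$ to be $m_j r$. As $j$ was arbitrary, every component of $mr$ equals $m_j r$, whence $mr = \sum_{i\in I} m_i r$, as required. The only genuinely delicate points are the unavailability of left distributivity (circumvented entirely through condition (ii)) and the bookkeeping forced by additive non-commutativity (handled by the cross-commutativity of distinct submodules from Proposition \ref{rp}); once these are respected, the direct-sum uniqueness does the rest.
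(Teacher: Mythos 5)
Your argument is correct. Note that the paper itself states Lemma \ref{lemm} without proof, merely citing Beidleman's thesis, so there is no in-paper argument to compare against; judged on its own, your proof is complete and uses exactly the right ingredients. The two points that genuinely need care --- that left distributivity is unavailable and must be replaced by condition (ii) of the submodule definition, and that the additive group of $M_R$ need not be abelian so that reordering must be justified via the cross-commutativity of distinct summands from Proposition \ref{rp} --- are both handled properly: the telescoping sum $mr - m_jr = \sum_{s}\bigl(b_sr - b_{s-1}r\bigr)$ lands in $\sum_{i\neq j}M_i$ because each increment lies in some $M_{i_s}$ and the sum of the normal subgroups $M_i$, $i\neq j$, is itself a subgroup, while $m_jr\in M_j$ follows from Proposition \ref{pro}. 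The final appeal to uniqueness of the decomposition $M_R = M_j \oplus \sum_{i\neq j}M_i$ (equivalently, $M_j\cap\sum_{i\neq j}M_i=\{0\}$) then pins down the $M_j$-component of $mr$ as $m_jr$, and summing over $j$ finishes the proof. This component-wise route is the natural one for this statement and is, in substance, the argument in Beidleman's thesis.
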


We are now ready to define a Beidleman near-vector space.
\begin{defn}(\cite{beidleman1966near}) Let $(M_R,+)$ be a group and $R$  a nearfield.
$M_R$ is called a Beidleman near-vector space if $M_R$ is a nearring module which is a direct sum of irreducible submodules.
\end{defn}
\section{Description of the $R$-subgroups of $R^m$}
The material presented in this section is  taken from \cite{djagbahowell18}, in which    finite dimensional Beidleman near-vector spaces were characterized as follows:
\begin{thm}(\cite{djagbahowell18})
Let $R$ be a (left) nearfield and $M_R$  a (right) nearring module. $M_R$ is a finite dimensional near-vector space if and only if $M_R \cong R^m$ for some   positive integer $m = \dim (M_R).$
\label{thm1}
\end{thm}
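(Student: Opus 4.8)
The plan is to prove the two implications separately, with the reverse implication being essentially a verification and the forward implication carrying all the content, namely that each irreducible summand is a copy of $R_R$. For the direction ($\Leftarrow$), I would equip $R^m$ with componentwise addition and the componentwise action $(a_1,\ldots,a_m)r=(a_1\circ r,\ldots,a_m\circ r)$. The two nearring-module axioms then reduce to the left distributive law of the nearfield and to associativity of $\circ$, so $R^m$ is a nearring module; the coordinate submodules $M_i$ satisfy $R^m=\bigoplus_{i=1}^m M_i$ with each $M_i\cong R_R$, and each $R_R$ is irreducible by Theorem \ref{irre}. Hence $R^m$ is a direct sum of irreducible submodules, i.e.\ a finite dimensional Beidleman near-vector space, and we read off $\dim(M_R)=m$.

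For the direction ($\Rightarrow$), by definition I may write $M_R=\bigoplus_{i=1}^m M_i$ with each $M_i$ irreducible, and the crux is the lemma that every irreducible $R$-module is isomorphic to $R_R$. Given $0\neq u\in M_i$, I would first check that the cyclic set $uR$ is an $R$-subgroup: it is closed under addition because $ur_1+ur_2=u(r_1+r_2)$ and under the action because $(ur)s=u(rs)$, and it contains $0$ since $u\cdot 0=0$. By irreducibility $uR$ is $\{0\}$ or $M_i$; ruling out the first (the action being unital, $u\cdot 1=u\neq 0$) forces $uR=M_i$. Then the map $\psi_i\colon R\to M_i$, $r\mapsto ur$, is a surjective module homomorphism, and its kernel $\{r\in R: ur=0\}$ is again an $R$-subgroup of $R$. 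Since $R_R$ is irreducible by Theorem \ref{irre} and $1\notin\ker\psi_i$, the kernel must be $\{0\}$, so $\psi_i$ is an isomorphism $R\cong M_i$.

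To finish I would assemble the $\psi_i$ into a global map $\Phi\colon M_R\to R^m$ sending the unique representation $\sum_i m_i$ to $\big(\psi_1^{-1}(m_1),\ldots,\psi_m^{-1}(m_m)\big)$. Uniqueness of the direct-sum representation makes $\Phi$ well defined and bijective; Proposition \ref{rp}, which guarantees that elements of distinct summands commute, makes $\Phi$ additive; and Lemma \ref{lemm}, giving $\big(\sum_i m_i\big)r=\sum_i(m_ir)$, together with the fact that each $\psi_i^{-1}$ intertwines the action, makes $\Phi$ respect scalar multiplication. Thus $M_R\cong R^m$, completing the equivalence.

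I expect the main obstacle to be the irreducible-summand lemma, precisely because a nearring module satisfies only one distributive-type axiom: the delicate points are verifying that $uR$ and $\ker\psi_i$ are genuine $R$-subgroups without a second distributive law, and securing that the action is unital (or at least nonvanishing) so that $uR\neq\{0\}$. A secondary subtlety is the well-definedness of $\dim(M_R)=m$, i.e.\ that the number of irreducible summands is independent of the chosen decomposition; in the finite setting this follows by comparing cardinalities via $|M_R|=|R|^m$, but in general it requires a Krull--Schmidt / Jordan--H\"older type uniqueness argument, which I would either invoke or establish as a separate lemma.
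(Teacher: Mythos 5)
The paper does not actually prove this statement: Theorem \ref{thm1} is imported verbatim from \cite{djagbahowell18} and stated without proof, so there is no in-paper argument to compare against. Your reconstruction is the standard one and, as far as I can tell, it is the same route taken in the cited source: componentwise verification for the easy direction, and for the converse the key lemma that every irreducible summand $M_i$ is isomorphic to $R_R$ via $r\mapsto ur$, using Theorem \ref{irre} twice (once to see $R_R$ is irreducible, once to kill the kernel) and then Propositions/Lemmas \ref{rp} and \ref{lemm} to glue the $\psi_i^{-1}$ into a module isomorphism onto $R^m$. All the individual verifications you sketch go through with the one-sided axioms: $uR$ is closed under $+$ and under the action, $u\cdot 0_R=0_M$ follows from $u(0+0)=u0+u0$, and $0_M\cdot s=(u\cdot 0_R)\cdot s=u(0_R\cdot s)=0_M$ since nearfields are zero-symmetric, so $\ker\psi_i$ really is an $R$-subgroup of $R_R$.

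The one point you correctly flag is also the only real gap relative to the definitions as given \emph{in this paper}: the nearring-module axioms here do not include unitality, so "$u\cdot 1=u\neq 0$" is not available, and without it an irreducible summand with trivial action (necessarily $\cong\mathbb{Z}_p$) would violate the theorem; the statement implicitly assumes unital modules, as in Beidleman's original setting, and you should either add that hypothesis or cite it. For the kernel step, note you do not need $1\notin\ker\psi_i$ at all: surjectivity of $\psi_i$ onto $M_i\neq\{0\}$ already rules out $\ker\psi_i=R$, which is cleaner and avoids a second appeal to unitality. Your remark on well-definedness of $m$ is apt; in the finite setting of this paper the cardinality count $|M_R|=|R|^m$ settles it, which is exactly how the $R$-dimension is shown to be well defined later in Section 4.
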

From now on we will choose $R$ to be  a finite nearfield.
\begin{defn}(\cite{djagbahowell18})
Let $v_1,v_2,\ldots,v_k $ be a finite number of vectors in $R^m$. The smallest $R$-subgroup of $R^m$ containing $ v_1,v_2,\ldots,v_k $ is denoted by $gen(v_1,\ldots,v_k).$
\end{defn}

Let $LC_0(v_1,v_2,\ldots,v_k):=\{ v_1,v_2,...,v_k\}$ and for $n\geq0$, let $LC_{n+1}$ be the set of all linear combinations of elements in $LC_n(v_1,v_2,\ldots,v_k)$, i.e.
\begin{equation*}
	LC_{n+1}(v_1,v_2,\ldots,v_k)=\left \{ \sum_{w \in LC_n} w \lambda_w  \thickspace | \thickspace \lambda_w \in R  \thickspace \forall w \in LC_n \right\}.
\end{equation*}
We shall  denote $LC_n(v_1,v_2,\ldots,v_k)$ by $LC_n$ for short when there is no ambiguity with regard to the initial set of vectors. The following theorem gives an explicit  description of $gen(v_1,\ldots,v_k)$.
\begin{thm}(\cite{djagbahowell18}) Let $v_1,v_2,\ldots,v_k \in R^n$.
	We have 
	\begin{equation*}
	gen(v_1,\ldots,v_k)=\bigcup_{i=0}^\infty LC_i.
	\end{equation*}
	\label{th1}
\end{thm}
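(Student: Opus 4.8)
The plan is to prove that $gen(v_1,\ldots,v_k)=\bigcup_{i=0}^\infty LC_i$ by establishing two inclusions. The easier inclusion is $\bigcup_{i=0}^\infty LC_i \subseteq gen(v_1,\ldots,v_k)$. First I would argue by induction on $i$ that each $LC_i$ sits inside $gen(v_1,\ldots,v_k)$. The base case $LC_0=\{v_1,\ldots,v_k\}$ is immediate since $gen(v_1,\ldots,v_k)$ contains these vectors by definition. For the inductive step, assuming $LC_n \subseteq gen(v_1,\ldots,v_k)$, any element of $LC_{n+1}$ has the form $\sum_{w \in LC_n} w\lambda_w$ with $\lambda_w \in R$; since $gen(v_1,\ldots,v_k)$ is an $R$-subgroup, it is closed under the scalar action (so each $w\lambda_w \in gen(v_1,\ldots,v_k)$) and under addition (so the finite sum lies in $gen(v_1,\ldots,v_k)$). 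This gives $LC_{n+1}\subseteq gen(v_1,\ldots,v_k)$ and hence the union is contained in it.

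For the reverse inclusion, the natural strategy is to show that $U:=\bigcup_{i=0}^\infty LC_i$ is itself an $R$-subgroup containing $v_1,\ldots,v_k$; since $gen(v_1,\ldots,v_k)$ is by definition the smallest such $R$-subgroup, it will follow that $gen(v_1,\ldots,v_k)\subseteq U$. Containment of the generating vectors is clear because $LC_0 \subseteq U$. The key point is to verify that $U$ is closed under addition and under the right $R$-action. Closure under the $R$-action is straightforward: if $u \in U$ then $u \in LC_n$ for some $n$, and for any $\lambda \in R$ the single-term sum $u\lambda$ is a linear combination of elements of $LC_n$, hence lies in $LC_{n+1}\subseteq U$. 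I would also need $u \in LC_{n+1}$ itself (taking the trivial combination $u\cdot 1$), which shows the chain $LC_n \subseteq LC_{n+1}$ is increasing; this monotonicity is what makes the union behave well.

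The main obstacle is closure of $U$ under addition, because two elements $u_1,u_2 \in U$ may a priori live at different levels $LC_{n_1}$ and $LC_{n_2}$, and a linear combination is only defined using elements drawn from a single level $LC_n$. The resolution hinges on the monotonicity $LC_n \subseteq LC_{n+1}$: given $u_1 \in LC_{n_1}$ and $u_2 \in LC_{n_2}$, set $N=\max(n_1,n_2)$, so that both $u_1,u_2 \in LC_N$ by the increasing property. Then $u_1+u_2 = u_1\cdot 1 + u_2 \cdot 1$ is a linear combination of elements of $LC_N$, hence lies in $LC_{N+1}\subseteq U$. I should be slightly careful here that the defining expression for $LC_{N+1}$ ranges over \emph{all} $w \in LC_N$ rather than a chosen subset; but one may take $\lambda_w = 0$ for all $w$ other than $u_1,u_2$, using that $R$ has a zero and that $w\cdot 0 = 0$, so the sum collapses to $u_1+u_2$.

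Finally I would note that additive inverses are handled the same way: in a nearfield the additive group is abelian, and $-u = u\lambda$ for a suitable $\lambda \in R$ (the scalar $-1$), so $-u \in LC_{n+1}\subseteq U$ whenever $u \in LC_n$; together with closure under addition and the presence of $0$, this confirms $U$ is a subgroup of $(R^m,+)$. Having checked that $U$ is an $R$-subgroup containing $v_1,\ldots,v_k$, the minimality of $gen(v_1,\ldots,v_k)$ yields the second inclusion, and combining the two inclusions completes the proof. The only delicate bookkeeping is the level-shifting argument for addition; everything else reduces to unwinding the definitions of $LC_n$ and of an $R$-subgroup.
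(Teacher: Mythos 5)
Your proof is correct. The paper itself does not reproduce a proof of this theorem (it is quoted from the reference \cite{djagbahowell18} without argument), but your double-inclusion strategy --- induction on $i$ for $LC_i\subseteq gen(v_1,\ldots,v_k)$, and verifying that $\bigcup_i LC_i$ is an $R$-subgroup via the monotonicity $LC_n\subseteq LC_{n+1}$ (using $u\cdot 1=u$, $w\cdot 0=0$, and $u\cdot(-1)=-u$) so that minimality gives the reverse containment --- is the standard argument for such a statement, and your handling of the level-shifting issue for sums of elements from different $LC_{n}$ is exactly the point that needs care.
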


In order to give a  description of $gen(v_1,\ldots,v_k)$ in terms of the basis elements, the authors of \cite{djagbahowell18} first derived results analogous to those  for  row-reduction in vector spaces.
\begin{lem}(\cite{djagbahowell18})
\begin{itemize}
\item For any permutation $\sigma$ of the indices $1,2,...,k$, we have
	\begin{equation*}
	gen(v_1,\ldots,v_k)=gen(v_{\sigma(1)} ,\ldots,v_{\sigma(k)}).
	\end{equation*}
\item 	If $ 0 \neq \lambda \in R $, then
	\begin{equation*}
	gen(v_1,\ldots,v_k)=gen(v_1  \lambda,\ldots,v_k).
	\end{equation*}
\item 	For any scalars $\lambda_2, \lambda_3,\ldots , \lambda_k \in R$, we have 
	\begin{equation*}
	gen(v_1,\ldots,v_k)=gen \big (v_1 + \sum_{i=2}^{k} v_i \lambda_i,v_2, \ldots,v_k \big ).
	\end{equation*}
\item If $w \in gen(v_1,\ldots,v_k)$ then
	\begin{equation*}
	gen(v_1,\ldots,v_k)=gen(w, v_1,v_2, \ldots,v_k).
	\end{equation*}	
\end{itemize}
\label{l1}
\end{lem}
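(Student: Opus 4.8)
The plan is to establish all four identities by two-sided inclusion, using in each case only the defining property of $gen(v_1,\ldots,v_k)$ as the smallest $R$-subgroup of $R^m$ containing the listed vectors. Concretely, this property has two halves that I will apply repeatedly: (a) $gen(v_1,\ldots,v_k)$ is itself an $R$-subgroup containing each $v_i$, and (b) if $H$ is any $R$-subgroup with $\{v_1,\ldots,v_k\}\subseteq H$, then $gen(v_1,\ldots,v_k)\subseteq H$. Recall that an $R$-subgroup is a subgroup of $(R^m,+)$ closed under right multiplication by $R$, hence closed under addition, additive inverses, and the right scalar action. These closure properties, together with the module axioms $m(r_1+r_2)=mr_1+mr_2$ and $m(r_1r_2)=(mr_1)r_2$, are the only facts I will use.

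The first and last statements are almost immediate. For the permutation statement, the underlying set $\{v_1,\ldots,v_k\}$ coincides with $\{v_{\sigma(1)},\ldots,v_{\sigma(k)}\}$, so both sides are by definition the smallest $R$-subgroup containing one and the same set. For the last statement, assume $w\in gen(v_1,\ldots,v_k)$. Then $gen(v_1,\ldots,v_k)$ is an $R$-subgroup containing $w,v_1,\ldots,v_k$, so (b) yields $gen(w,v_1,\ldots,v_k)\subseteq gen(v_1,\ldots,v_k)$; the reverse inclusion follows from $\{v_1,\ldots,v_k\}\subseteq\{w,v_1,\ldots,v_k\}$ by minimality.

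The two middle statements are the substantive ones, since they require recovering the original generator from the modified list. For the scaling statement, fix $0\neq\lambda\in R$. The forward inclusion holds because $v_1\lambda\in gen(v_1,\ldots,v_k)$ by closure under right multiplication, so $gen(v_1,\ldots,v_k)$ is an $R$-subgroup containing $v_1\lambda,v_2,\ldots,v_k$ and hence contains $gen(v_1\lambda,\ldots,v_k)$ by (b). For the reverse inclusion I use that $\lambda$ is invertible in the nearfield $R$ and that the action of $R$ on $R^m$ is unital; then $(v_1\lambda)\lambda^{-1}=v_1(\lambda\lambda^{-1})=v_1$ by the second module axiom, so $v_1\in gen(v_1\lambda,\ldots,v_k)$, which therefore contains every $v_i$ and so contains $gen(v_1,\ldots,v_k)$. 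For the third statement, set $w=v_1+\sum_{i=2}^k v_i\lambda_i$. The forward inclusion follows because $w$ lies in $gen(v_1,\ldots,v_k)$ by closure under right multiplication and addition. For the reverse, observe that $v_1=w-\sum_{i=2}^k v_i\lambda_i$ can be formed inside $gen(w,v_2,\ldots,v_k)$ using closure under right multiplication, additive inverses, and addition; this $R$-subgroup then contains all the $v_i$ and so contains $gen(v_1,\ldots,v_k)$.

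I do not anticipate a serious obstacle, but the one point demanding care is that $R$ satisfies only a single distributive law. The argument is arranged so that the missing law $(\alpha+\beta)\circ s=\alpha\circ s+\beta\circ s$ is never invoked: every scalar manipulation uses either the left module distributive law $m(r_1+r_2)=mr_1+mr_2$ or the associativity $m(r_1r_2)=(mr_1)r_2$, both of which hold in any nearring module. I also use that $(R^m,+)$ is abelian, inherited from the abelianness of the additive group of the nearfield, so that reordering the finite sum $\sum_{i=2}^k v_i\lambda_i$ and forming $w-\sum_{i=2}^k v_i\lambda_i$ cause no difficulty.
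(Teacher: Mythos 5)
Your argument is correct. There is, however, no in-paper proof to compare it against: the lemma is quoted from \cite{djagbahowell18} without proof, and the surrounding text only sets up the machinery of the sets $LC_n$ and Theorem \ref{th1}, which identifies $gen(v_1,\ldots,v_k)$ with $\bigcup_{n}LC_n$. That framework suggests the cited proof proceeds by chasing generators through the $LC$-filtration, whereas you work directly from the universal property of $gen$ as the smallest $R$-subgroup containing the given vectors; your route is cleaner and avoids the inductive bookkeeping, at the cost of not exhibiting explicit linear-combination witnesses. The two substantive points are handled correctly: invertibility of $\lambda$ in $(R^*,\circ)$ together with $m(r_1r_2)=(mr_1)r_2$ recovers $v_1$ from $v_1\lambda$, and reconstructing $v_1$ from $v_1+\sum_{i=2}^{k}v_i\lambda_i$ uses only closure of an $R$-subgroup under addition, negation and right scalar multiplication, never the missing distributive law $(\alpha+\beta)\circ\lambda=\alpha\circ\lambda+\beta\circ\lambda$. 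The one point you should make explicit rather than assume is unitality: the paper's definition of a nearring module does not postulate $m\cdot 1=m$, so for the scaling step you need the (immediate but separate) observation that in $M=R^m$ with the componentwise action, right multiplication by the identity of $(R^*,\circ)$ fixes every vector, including those with zero coordinates, since $0\circ\lambda=0$ in a nearfield.
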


\begin{defn}
Let $v \in R^m$. Then $v$ is called an $R$-linear combination of some finite set of vectors $v_1,\ldots,v_k \in R^n$ if $v \in gen(v_1,\ldots,v_k).$
\end{defn}
\begin{defn}
The $R$-row space of a matrix is the set of all  $R$-linear combinations of its row vectors.
\end{defn}
\newpage
\begin{rem}~
\begin{itemize}
\item Let $w \in R^m.$ Note that $w$ is a linear combination of some finite set of vectors $v_1,\ldots,v_k \in R^m$ if $w \in LC_1(v_1,\ldots,v_k).$ If any vector $w$ is a linear combination of the finite  set of vectors $v_1,\ldots,v_k \in R^m$ then $w$ is also  an $R$-linear combination of the finite set of vectors $v_1,\ldots,v_k \in R^m$. But the converse is not true.
\item 
A matrix $V$ consisting of the rows  $v_1,\ldots,v_k \in R^m$  will be denoted by $V=\big ( v_i^j \big )_ {\substack{ 1 \leq i \leq k \\ 1 \leq j \leq m}}$ where $v_i^j$ is always the $j$-th entry of $v_i$.
\end{itemize} 

\end{rem}

The following result gives the characterization of the $R$-subgroups of $R^m,$ where $R$ is a finite nearfield. We include the proof (taken from \cite{djagbahowell18}) since we will make use of the intermediate steps in the proof throughout the next section.
\begin{thm}(\cite{djagbahowell18}) Let  $v_1,\ldots,v_k$ be vectors in $R^m$. Then
\begin{align*}
gen(v_1,\ldots,v_k)= \bigoplus_{i=1}^{k'}u_iR,
\end{align*}where the $u_i$ (obtained from $v_i$ by an explicit
procedure) for $i \in \lbrace 1, \ldots,k' \rbrace $ are the rows of  a matrix  $U=\big(u_{i}^j  \big) \in R^{k' \times m}$ each of whose columns has at most one non-zero entry.
\label{th2}
\end{thm}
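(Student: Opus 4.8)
The plan is to run Gaussian elimination inside the module $R^m$, using the four $gen$-invariances of Lemma \ref{l1} as the admissible row operations, and then to read the direct sum off the support pattern of the reduced matrix. Two facts are used throughout. First, for a single vector $v$ the set $vR$ is already an $R$-subgroup: it is closed under addition because $vr+vs=v(r+s)$ (the single distributive law of $R$, applied coordinatewise) and under the action because $(vr)s=v(r\circ s)$ (the module axiom, i.e.\ associativity of $\circ$); hence $gen(v)=vR$. Second, the projection $\pi_j\colon R^m\to R$ onto coordinate $j$ is a module homomorphism, so for $M=gen(v_1,\dots,v_k)$ the image $\pi_j(M)$ is an $R$-subgroup of $R$, and by Theorem \ref{irre} it is either $\{0\}$ or $R$.

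I would induct on $m$. For $m=1$ the subgroup $M\subseteq R$ is $\{0\}$ or $R$ by irreducibility, which already has the required shape. For the step, inspect the last coordinate. If $\pi_m(M)=\{0\}$ all generators lie in $R^{m-1}\times\{0\}$ and the inductive hypothesis applies directly. Otherwise $\pi_m(M)=R$, so after a permutation and a right-scaling of the generators (the first two operations of Lemma \ref{l1}) I may assume $u_1:=v_1$ has $u_1^m=1$. Replacing each $v_i$ ($i\ge2$) by $v_i-u_1 v_i^m$ (the third operation of Lemma \ref{l1}) clears the last coordinate of every other generator. Writing $K:=M\cap(R^{m-1}\times\{0\})$, one checks $M=u_1R\oplus K$: any $w\in M$ is $u_1 w^m+(w-u_1w^m)$ with $w-u_1w^m\in K$, while $u_1R\cap K=\{0\}$ since $u_1r$ with zero last coordinate forces $r=1\circ r=0$. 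Applying the inductive hypothesis to $K\subseteq R^{m-1}$ gives $K=\bigoplus_{i\ge2}u_iR$ with the $u_i$ $(i\ge2)$ of pairwise disjoint support inside $\{1,\dots,m-1\}$.

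Now $u_2,\dots,u_{k'}$ have disjoint supports and $u_1$ is the only row meeting column $m$; what remains is to make the support of $u_1$ disjoint from the blocks $S_i=\mathrm{supp}(u_i)$, $i\ge2$. I would do this by back-substitution, replacing $u_1$ by $u_1-\sum_{i\ge2}u_i\nu_i$ (again the third operation of Lemma \ref{l1}) with the $\nu_i$ chosen to annihilate $u_1$ on each $S_i$. I expect this cleanup to be the main obstacle: a single right-scalar $\nu_i$ kills $u_1$ throughout $S_i$ only when $u_1|_{S_i}$ is a right-multiple of $u_i|_{S_i}$, and proving that one can always arrange this is precisely where the structure of $R$-subgroups over a nearfield must enter---over an ordinary field genuinely entangled blocks occur, so this step cannot be purely formal and is the substance of the explicit procedure named in the statement. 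Once all supports are disjoint, the rows $u_1,\dots,u_{k'}$ assemble into a matrix $U=(u_i^j)$ with at most one non-zero entry in each column.

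Finally I would verify the asserted direct sum, which is the clean part. Disjointness of supports makes $\sum_i u_iR$ closed under the action: the $j$-th coordinate of $(\sum_i u_i r_i)s$ sees only the unique $u_i$ non-zero in column $j$ and equals $(u_i^j\circ r_i)\circ s=u_i^j\circ(r_i\circ s)$ by associativity, so $(\sum_i u_i r_i)s=\sum_i u_i(r_i\circ s)$; closure under addition is $\sum_i u_i r_i+\sum_i u_i s_i=\sum_i u_i(r_i+s_i)$. Thus $\sum_i u_iR$ is an $R$-subgroup containing every generator and hence equals $M$. Each $u_iR$ is in fact a submodule: the additive group is abelian, so normality is automatic, and for $x=\sum_j u_j t_j\in M$ the element $(x+u_i\nu)r-xr$ vanishes off $S_i$ and equals $u_i((t_i+\nu)\circ r-t_i\circ r)$ on $S_i$ by the single distributive law, so it lies in $u_iR$. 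The sum is direct because disjoint supports force $u_kR\cap\sum_{i\ne k}u_iR=\{0\}$. This gives $M=\bigoplus_{i=1}^{k'}u_iR$ with $U$ having at most one non-zero entry per column, as claimed.
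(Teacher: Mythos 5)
Your strategy---induction on the number of coordinates, splitting $M=gen(v_1,\dots,v_k)$ as $u_1R\oplus K$ with $K=M\cap(R^{m-1}\times\{0\})$---is genuinely different from the paper's, and the parts you carry out are sound: the group-theoretic decomposition $M=u_1R\oplus K$, the fact that $K$ is an $R$-subgroup of $R^{m-1}$ (and, $R$ being finite, is generated by a finite set so the inductive hypothesis applies to it, albeit at the cost of the generators no longer being obtained from the $v_i$ in any explicit way), and the closing verification that rows with pairwise disjoint supports yield a direct sum of $R$-subgroups equal to $M$. But there is a genuine gap exactly where you flag one: the ``cleanup'' replacing $u_1$ by $u_1-\sum_{i\ge2}u_i\nu_i$. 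A single right scalar $\nu_i$ annihilates $u_1$ on all of $S_i=\mathrm{supp}(u_i)$ only if $u_1|_{S_i}$ is a right multiple of $u_i|_{S_i}$, and nothing you have established guarantees this; if it fails, subtracting any multiple of $u_i$ still leaves a row meeting $S_i$ in a proper nonempty subset, and the procedure stalls. One can show that such a $\nu_i$ must exist \emph{using the truth of the theorem for $M$} (any disjoint-support decomposition of $M$ has a unique summand $\tilde u_1$ supported at column $m$, and $\tilde u_1-u_1\in K$), but that is circular here; an independent argument would have to exploit closure of $u_1R\oplus K$ under the $R$-action, and that is precisely the nontrivial content of the theorem. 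As written, the proof is incomplete.

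For comparison, the paper's proof never tries to repair an entangled column by modifying existing rows alone. After ordinary row reduction, if column $j$ still has two non-zero entries $w_r^j,w_s^j$, it chooses a non-distributive triple $(\alpha,\beta,\lambda)$, sets $\alpha'=(w_r^j)^{-1}\alpha$, $\beta'=(w_s^j)^{-1}\beta$, and forms the \emph{new} element
\begin{align*}
\theta=(w_r\alpha'+w_s\beta')\lambda-w_r(\alpha'\lambda)-w_s(\beta'\lambda)\in LC_2\subseteq gen(v_1,\dots,v_k),
\end{align*}
which vanishes in every column before $j$ and has $\theta^j=(\alpha+\beta)\lambda-\alpha\lambda-\beta\lambda\ne0$. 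Normalised, $\theta$ is adjoined as a fresh pivot row and used to clear both offending entries of column $j$; the row count grows (this is why $k'$ may exceed $k$), and termination follows because each application reduces the number of non-zero entries in the current column by one. If you wish to salvage your induction, the missing lemma you must prove is: if $u_1R\oplus\bigoplus_{i\ge2}u_iR$ is closed under the $R$-action and the $u_i$ ($i\ge2$) have pairwise disjoint supports, then for each $i$ there is $\nu_i\in R$ with $u_1|_{S_i}=u_i|_{S_i}\nu_i$. Until that is supplied, the statement is not proved.
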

\begin{proof}
Given a set of vectors $v_1,\ldots,v_k \in R^m$, arrange them in a matrix $V$ whose $i$-th row is $v_i$. Say $V=\big ( v_i^j \big )_ {\substack{ 1 \leq i \leq k \\ 1 \leq j \leq m}}$. Then $gen(v_1,\ldots,v_k)$ is the $R$-row space of $V$, which is an $R$-subgroup of $R^m$. We can then do the usual Gaussian  elimination on the rows. According to  Lemma \ref{l1}, the $gen$ spanned by the rows will remain unchanged under each row operation (swapping rows, scaling rows, adding multiples of a row to another). When the algorithm terminates, we obtain a matrix $W \in R^{k \times m }$ in reduced row-echelon form (denoted by $RREF(V)$). Let the non-zero rows of $W$ be denoted by $w_1,w_2,\ldots,w_t$ where $t \leq k$.

Case 1: Suppose that every column has at most one non-zero entry. Then
\begin{equation*}
	gen(v_1,\ldots,v_k)=gen(w_1,\ldots,w_t)=w_1R+w_2R+\cdots+w_tR,
\end{equation*} 
where the sum is direct. In this case we are done. 

Case 2: Suppose that the $j$-th column is the first column that has at least $2$ non-zero entries. Let $p$ be the number of non-zero entries it has, say $w_r^j\neq 0 \neq w_s^j$ with $r<s$ are the first two non-zero entries, (we necessarily have $r,s\leq j$) where $w_r^j$ is the $j$-th entry  of row $w_r$ and $w_s^j$  the $j$-th entry  of row $w_s.$
Let 
\begin{align}
(\alpha, \beta, \gamma )\in R^3  \thickspace \thickspace \mbox{such that}  \thickspace \thickspace (\alpha +\beta) \lambda \neq \alpha  \lambda  + \beta  \lambda.
\label{f}
\end{align}
We apply what we will call the \say{\textsl{distributivity trick}}:

Let $\alpha'= (w_r^j)^{-1}\alpha$ and $\beta'= (w_s^j)^{-1}\beta$. Then form a new row
\begin{equation*}
	\theta=(w_r \alpha' + w_s  \beta') \lambda - w_r  (\alpha' \lambda)-w_s (\beta ' \lambda). 
\end{equation*}
Since $\theta \in LC_2(w_r,w_s)$ we have $\theta\in gen(w_1,\ldots,w_t)$.

 For $ 1 \leq l < j,$ either $w_r^l$ or $w_s^l$ is zero because the $j$-th column is the first column that has two non-zero entries, thus $\theta^l=0$. Note that by the choice of $\alpha,\beta,\lambda$, we have 
\begin{align*}
\theta^j &=(w_r^j \alpha ' + w_s^j \beta') \lambda - (w_r^j\alpha ') \lambda- (w_s^j\beta ') \lambda \\
&=\big ( w_r^j (w_r^j)^{-1} \alpha + w_s^j(w_s^j)^{-1}  \beta \big ) \lambda- \big (  w_r^j ( w_r^j )^{-1} \alpha \big ) \lambda - \big (  w_s^j ( w_s^j )^{-1} \beta \big ) \lambda \\
&=(\alpha +\beta) \lambda - \alpha \lambda - \beta \lambda \neq 0.
\end{align*}
It follows  that $\theta^j\neq 0$.
Hence $ \theta =(0, \ldots, 0,\theta^j,\theta^{j+1},  \ldots, \theta^n )$. We now multiply the row $\theta$ by $ (\theta^j)^{-1},$ obtaining the row $\phi=(0, \ldots, 0,1,\theta^{j+1}(\theta^j)^{-1},  \ldots, \theta^n (\theta^j)^{-1}) \in gen(w_1,\ldots,w_k)$ where  $\phi ^j=1$ is  the pivot that we have created.

 As a next step, we form a new matrix of size $(t+1) \times m $ by adding $\phi$ to the rows $w_1, \ldots,w_t$ (just after the row $w_s$). In  this augmented matrix we replace the rows $w_r, w_s$  with $ y_r=w_r-(w_r^j) \phi, y_s=w_r-(w_s^j) \phi$, respectively. This yields another new matrix  composed of  the rows $w_1,\ldots,w_{r-1},y_r, \ldots, y_s, \phi, w_{s+1},  \ldots,w_t $ which has $p-1$ non-zero entries in the $j$-th column. By Lemma \ref{l1}, the  \textit{gen} of the rows of the augmented matrix is the  \textit{gen} of the rows of $W$ (which in turn is  $gen(v_1,\ldots,v_k)$).
Hence,
\begin{align*}
gen(v_1, \ldots,v_k) & = gen(w_1,\ldots,w_r, \ldots, w_s, \ldots,w_t)\\
&= gen(w_1,\ldots,y_r, \ldots, y_s, \phi, \ldots,w_t).
\end{align*}

We  keep repeating the \say{\textsl{distributivity trick}} on the $j$-th column until we form another new matrix which has only one non-zero entry at the $j$-th column.

By continuing this process, we can eliminate all columns with more than one non-zero entry. Let the final matrix have rows $u_1,u_2,\ldots, u_{k'}$. Then
\begin{equation*}
	gen(v_1,\ldots,v_k)  =gen(w_1,\ldots,w_t)=gen(u_1,\ldots,u_{k'})=u_1R+u_2R+\ldots+u_{k'}R, 
\end{equation*} 
where the sum is direct.

\end{proof}

The procedure described in the proof  of Theorem \ref{th2} is called the expanded Gaussian elimination (eGe algorithm). The focus of the next section is motivated by the \say{\textsl{distributivity trick}} used in the proof  of Theorem \ref{th2}.

\section{On the generalized  distributive set of a finite Dickson nearfield}


Let $(R,+, \circ)$ be a nearfield. We  use $D(R)$ to denote the set of all distributive elements of $R$, i.e.,
\begin{equation*}
D(R) = \lbrace \lambda \in R : \thickspace (\alpha+\beta) \circ \lambda = \alpha  \circ  \lambda+\beta \circ \lambda \thickspace \mbox{for all}\thickspace \alpha, \beta  \in R \rbrace.
\end{equation*} 
and $C(R)$ to denote  the set of elements that commute with every element of $R$, also called the multiplicative center of $ (R, \circ),$  i.e.,
\begin{align*}
C(R)= \left\lbrace x \in R : x\circ y=y \circ x  \thickspace  \mbox{for all}\thickspace y \in R \right\rbrace. 
\end{align*}

\begin{thm}(\cite{zemmer1964}) Let $R$ be a nearfield.  Then $D(R)$ under the operations of $R$ is a skewfield and is a subnearfield of $R$.
\label{tp}
\end{thm}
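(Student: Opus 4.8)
The plan is to verify directly that $D(R)$ is closed under both operations, contains the relevant identities and inverses, and that \emph{both} distributive laws hold on $D(R)$; since a subset of an abelian group closed under subtraction is itself an abelian group, and a skewfield is in particular a nearfield, this simultaneously yields that $D(R)$ is a skewfield and a subnearfield. Throughout I would lean on the two structural facts already available: the left distributive law $a \circ (b+c) = a \circ b + a \circ c$ holds for \emph{all} $a,b,c \in R$, and $(R,+)$ is abelian.

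First I would establish that $(D(R),+)$ is a subgroup of $(R,+)$. That $0 \in D(R)$ follows from $x \circ 0 = 0$, itself a consequence of applying the left law to $x \circ (0+0)$. For closure under addition, given $\lambda, \mu \in D(R)$ and arbitrary $\alpha, \beta$, I would expand $(\alpha+\beta)\circ(\lambda+\mu)$ first by the left law and then by the defining property of $\lambda$ and $\mu$, and compare it with the expansion of $\alpha\circ(\lambda+\mu)+\beta\circ(\lambda+\mu)$; the two agree \emph{precisely} because $(R,+)$ is abelian, so that the four summands $\alpha\circ\lambda$, $\beta\circ\lambda$, $\alpha\circ\mu$, $\beta\circ\mu$ may be freely rearranged. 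For additive inverses, I would use $x\circ(-\lambda) = -(x\circ\lambda)$ (again from the left law applied to $x \circ (\lambda + (-\lambda))$) to transfer the distributivity of $\lambda$ onto $-\lambda$. This is the step where the abelian hypothesis is genuinely invoked, and it is worth flagging explicitly.

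Next I would show $D(R) \setminus \{0\}$ is a subgroup of $(R^*,\circ)$. Clearly $1 \in D(R)$, since $(\alpha+\beta)\circ 1 = \alpha+\beta = \alpha\circ 1 + \beta\circ 1$. For closure under $\circ$, given $\lambda,\mu \in D(R)$ I would compute $(\alpha+\beta)\circ(\lambda\circ\mu)$ using associativity of $\circ$, peel off $\lambda$ via its distributivity, then peel off $\mu$ via its distributivity, and reassemble with associativity to reach $\alpha\circ(\lambda\circ\mu)+\beta\circ(\lambda\circ\mu)$.

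The step I expect to be the crux is closure under multiplicative inverses. Given $0 \neq \lambda \in D(R)$, to test whether $\lambda^{-1} \in D(R)$ I would introduce the substitution $a = \alpha\circ\lambda^{-1}$ and $b = \beta\circ\lambda^{-1}$, so that $\alpha = a\circ\lambda$ and $\beta = b\circ\lambda$; then $\alpha+\beta = a\circ\lambda + b\circ\lambda = (a+b)\circ\lambda$ by the distributivity of $\lambda$, and multiplying on the right by $\lambda^{-1}$ and using associativity collapses $(\alpha+\beta)\circ\lambda^{-1}$ to $a+b = \alpha\circ\lambda^{-1}+\beta\circ\lambda^{-1}$. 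Pulling the distributivity of $\lambda$ back through $\lambda^{-1}$ in this way is the only genuinely non-routine idea. Finally, since the left law holds on all of $R$ and every element of $D(R)$ satisfies the right law against arbitrary elements of $R$ (in particular against elements of $D(R)$), both distributive laws hold on $D(R)$; combined with the abelian additive group and the multiplicative group structure just established, $D(R)$ is a skewfield, and being closed under the operations of $R$ it is a subnearfield of $R$.
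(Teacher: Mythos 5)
Your proof is correct. Note that the paper itself offers no proof of this statement --- it is quoted verbatim from Zemmer \cite{zemmer1964} --- so there is nothing internal to compare against; judged on its own, your direct verification is the standard argument and all the essential points are in place: the use of commutativity of $(R,+)$ to reorder the four summands in the closure-under-addition step, the identities $x\circ 0=0$ and $x\circ(-\lambda)=-(x\circ\lambda)$ derived from the left distributive law, and the substitution $\alpha=a\circ\lambda$, $\beta=b\circ\lambda$ to pull right-distributivity of $\lambda$ back to $\lambda^{-1}$, which is indeed the one non-routine step. The only cosmetic omission is the trivial edge case $a+b=0$ in the inverse argument (where one needs $0\circ 1=0$, which holds since nearfields are zero-symmetric); this does not affect correctness.
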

Note that it is known in \cite{aichinger2004multiplicative} that for any nearring $R$ the multiplicative center  of $R$ is not always a subnearring of $R$.  The authors in \cite{cannon2007centers} determined when  $C(R)$ is a subnearring of $R$.

\begin{defn}(\cite{cannon2007centers}) Let $R$ be a nearring  and $D(R)$ be the distributive elements of $R$. Then the generalized center of $R$ is the set
$$GC(R)=\{ x \in R:\thickspace  x \circ y=y \circ x  \thickspace  \mbox{for all}\thickspace  y\in D(R) \}$$
is called the set of elements of $R$ that commutes with elements of $D(R)$.
\end{defn}
\begin{lem}(\cite{cannon2007centers})   Let $R$ be a nearfield. Then $GC(R)$  under the operations of $R$ is a subnearfield of $R.$
\label{lem41}
\end{lem}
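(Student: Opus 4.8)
The plan is to verify directly that $GC(R)$ satisfies the axioms of a subnearfield: that $(GC(R),+)$ is a subgroup of $(R,+)$ and that $\big(GC(R)^*,\circ\big)$ is a subgroup of $\big(R^*,\circ\big)$, with the left distributive law and the associativity of $\circ$ inherited automatically from $R$ on the restricted operations. First I would record the trivial containments $0,1\in GC(R)$: since $0$ is absorbing and $1$ is the multiplicative identity, both commute with every element of $R$, in particular with every $y\in D(R)$.

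The heart of the argument --- and the one place where the specific definition of $GC(R)$ is used in an essential way --- is additive closure. The point is that although $R$ satisfies only the left distributive law, every $y\in D(R)$ is by definition a distributive element, so the right distributive law $(\alpha+\beta)\circ y=\alpha\circ y+\beta\circ y$ becomes available precisely when the right factor lies in $D(R)$. Thus for $x_1,x_2\in GC(R)$ and $y\in D(R)$ I would compute
\[
(x_1+x_2)\circ y = x_1\circ y + x_2\circ y = y\circ x_1 + y\circ x_2 = y\circ(x_1+x_2),
\]
where the first equality uses distributivity of $y$, the middle uses the defining property of $GC(R)$, and the last uses left distributivity in $R$; hence $x_1+x_2\in GC(R)$. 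A nearly identical computation, using $x\circ y+(-x)\circ y=\big(x+(-x)\big)\circ y=0$ to identify $(-x)\circ y$ with $-(x\circ y)$ and the analogous left-distributive identity $y\circ(-x)=-(y\circ x)$, shows $-x\in GC(R)$, completing the additive group axioms.

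For the multiplicative structure I would use associativity of $\circ$ throughout. Closure follows from the chain
\[
(x_1\circ x_2)\circ y = x_1\circ(x_2\circ y) = x_1\circ(y\circ x_2) = (x_1\circ y)\circ x_2 = (y\circ x_1)\circ x_2 = y\circ(x_1\circ x_2).
\]
For inverses, starting from $x\circ y=y\circ x$ with $x\neq 0$ and using that $\big(R^*,\circ\big)$ is a group, left-multiplication by $x^{-1}$ gives $y=x^{-1}\circ(y\circ x)$, and subsequent right-multiplication by $x^{-1}$ yields $y\circ x^{-1}=x^{-1}\circ y$, so $x^{-1}\in GC(R)$. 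Since the left distributive law and associativity hold throughout $R$, they hold a fortiori on $GC(R)$; together with the group facts just established this is exactly what is needed to conclude that $GC(R)$ is a subnearfield of $R$.

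I expect the only genuine subtlety to be the additive closure step, where one must avoid distributing on the wrong side. It is precisely because $GC(R)$ is defined through commutation with the distributive elements $D(R)$ --- rather than with all of $R$ --- that the right distributive law is applicable and the argument closes cleanly; everything else is routine group-theoretic bookkeeping inside the nearfield axioms.
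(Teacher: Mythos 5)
Your proof is correct. Note that the paper itself offers no argument for this lemma: it is imported verbatim from Cannon--Farag--Kabza \cite{cannon2007centers}, so there is no in-paper proof to compare against. Your direct verification is a sound, self-contained substitute, and you have correctly isolated the one non-routine point, namely that the right distributive law $(x_1+x_2)\circ y=x_1\circ y+x_2\circ y$ is legitimate precisely because the right factor $y$ ranges over $D(R)$; the remaining steps (left distributivity, associativity, the inverse computations in $(R^*,\circ)$) are exactly the routine bookkeeping you describe. One small point worth making explicit: your identification of $(-x)\circ y$ with $-(x\circ y)$ uses $0\circ y=0$, which does not follow from the left distributive law alone (that law only gives $a\circ 0=0$); it is, however, a standard fact for nearfields --- and is built into the Dickson construction used throughout this paper, where $\alpha\circ\beta$ is defined to be $0$ when $\alpha=0$ --- so the argument closes.
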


 Now consider $(R,+,\circ)$ as  a finite Dickson nearfield for the Dickson pair $(q,n)$ with $n>1.$ 

\begin{thm}(\cite{ellerskarzel1964}) Let $R$ be a finite Dickson nearfield that arises from the Dickson pair $(q,n).$ Then 
$D(R)=C(R) \cong \mathbb{F}_{q}$.
\label{thp}
\end{thm}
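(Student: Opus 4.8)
The plan is to establish both equalities by proving $D(R)=\mathbb{F}_q$ and $C(R)=\mathbb{F}_q$ as subsets of $\mathbb{F}_{q^n}$; since the restriction of $\circ$ to $\mathbb{F}_q$ agrees with field multiplication (as $b^{q^k}=b$ for every $b\in\mathbb{F}_q$), this gives $D(R)=C(R)\cong\mathbb{F}_q$. The inclusions $\mathbb{F}_q\subseteq D(R)$ and $\mathbb{F}_q\subseteq C(R)$ are immediate: since $n\mid[n]_q$ we have $g^{[n]_q}\in H$, and as $\mathbb{F}_q^*=\langle g^{[n]_q}\rangle$ this gives $\mathbb{F}_q^*\subseteq H$; hence any $\lambda\in\mathbb{F}_q$ lies in the coset $H$ with $\phi_\lambda=\mathrm{id}$ and satisfies $\lambda^{q^k}=\lambda$ for all $k$, so both the distributive law and commutativity reduce to the corresponding identities for ordinary multiplication in $\mathbb{F}_{q^n}$.

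The technical core is the divisibility fact: if $d\mid n$ and $\tfrac{q^n-1}{n}$ divides $q^d-1$, then $d=n$. I would argue by a size estimate: if $d<n$ then $d\le n/2$, so $\tfrac{q^n-1}{n}\le q^d-1\le q^{n/2}-1$, forcing $q^{n/2}\le n$; but the Dickson condition together with $n>1$ forces $q\ge 3$ (a prime divisor of $n$ divides $q-1$), whence $q^{n/2}\ge 3^{n/2}>n$, a contradiction. Equivalently, the cyclic group $H=\langle g^n\rangle$ of order $\tfrac{q^n-1}{n}$ lies in no proper subfield $\mathbb{F}_{q^d}$ of $\mathbb{F}_{q^n}$, i.e.\ $H$ generates $\mathbb{F}_{q^n}$ over $\mathbb{F}_q$.

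For $C(R)\subseteq\mathbb{F}_q$, let $x\in C(R)$, $x\neq0$, lie in $g^{[k]_q}H$. Comparing $x\circ y=y\circ x$ for every $y\in H$ (for which $y\circ x=yx$) gives $y^{q^{k}}=y$ for all $y\in H$, so $H\subseteq\mathbb{F}_{q^{\gcd(k,n)}}$; the lemma forces $\gcd(k,n)=n$, i.e.\ $k=n$ and $x\in H$. Then $x\circ y=xy$ for all $y$, whereas $y\circ x=y\,\phi_y(x)$, so $\phi_y(x)=x$ for all $y\ne0$; choosing $y\in gH$ (so $\phi_y=\varphi$) gives $x^q=x$. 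For $D(R)\subseteq\mathbb{F}_q$, let $\lambda\in D(R)$, so that $f\colon x\mapsto x\circ\lambda$ is additive on $\mathbb{F}_{q^n}$; on $H$ it coincides with the map $x\mapsto\lambda x$ because $\phi_x=\mathrm{id}$ there. As $H$ additively $\mathbb{F}_q$-spans $\mathbb{F}_{q^n}$ (see below), these two additive maps agree everywhere; evaluating on $x\in gH$ gives $x\lambda^{q}=\lambda x$, so $\lambda^{q}=\lambda$ and $\lambda\in\mathbb{F}_q$.

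The step I expect to be the main obstacle is verifying that $H$, being in no proper subfield, additively $\mathbb{F}_q$-spans all of $\mathbb{F}_{q^n}$: the $\mathbb{F}_q$-span $V$ of $H$ contains $1$ and is closed under multiplication by $H$, hence is a nonzero vector space over the subfield $\mathbb{F}_q(H)=\mathbb{F}_{q^n}$ sitting inside the one-dimensional space $\mathbb{F}_{q^n}$, forcing $V=\mathbb{F}_{q^n}$. The divisibility lemma is the other delicate ingredient, but the Dickson hypothesis (which yields $q\ge 3$ once $n>1$) makes the estimate close cleanly, and the remaining manipulations are routine.
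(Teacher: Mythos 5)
The paper does not prove this statement at all: Theorem \ref{thp} is quoted from Ellers--Karzel \cite{ellerskarzel1964} and used as a black box, so there is no internal proof to compare against. Your argument is a correct, self-contained proof, and it is worth recording. The easy inclusions $\mathbb{F}_q\subseteq D(R)$ and $\mathbb{F}_q\subseteq C(R)$ go exactly as you say: $n\mid [n]_q$ puts $g^{[n]_q}$, hence all of $\mathbb{F}_q^*$, inside $H$, so elements of $\mathbb{F}_q$ act by ordinary field multiplication on either side of $\circ$. The genuine content is your divisibility lemma, that $H=\langle g^n\rangle$ lies in no proper subfield $\mathbb{F}_{q^d}$ with $d\mid n$, $d<n$: the estimate $\tfrac{q^n-1}{n}>q^{n/2}-1$ reduces to $q^{n/2}\geq n$, and the Dickson condition really does give $q\geq 3$ once $n>1$ (some prime divides both $n$ and $q-1$), so $3^{n/2}>n$ closes it. Granting that, your deduction of $C(R)\subseteq\mathbb{F}_q$ (test against $y\in H$, conclude $H\subseteq\mathbb{F}_{q^{\gcd(k,n)}}$, force $k=n$, then test against $y\in gH$) and of $D(R)\subseteq\mathbb{F}_q$ (the additive maps $x\mapsto x\circ\lambda$ and $x\mapsto\lambda x$ agree on $H$, hence on the subgroup it generates, which is all of $\mathbb{F}_{q^n}$ by your subring-is-a-subfield argument) are both sound; the slight conflation of \emph{additive span} with \emph{$\mathbb{F}_q$-span} is harmless precisely because $\mathbb{F}_q^*\subseteq H$ makes the two coincide. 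Two small points you should make explicit: the order formula $|H|=(q^n-1)/n$ presupposes $n\mid q^n-1$, which is a standard consequence of the Dickson conditions (and the paper assumes it implicitly when listing the $n$ cosets of $H$) --- though your inequality only needs $|H|\geq (q^n-1)/n$, which holds unconditionally; and the case $n=1$ is trivial and is excluded by the paper's standing hypothesis $n>1$ in this section.
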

So the distributive elements of a finite Dickson nearfield $R$ under the operations of $R$ form a subnearfield of the nearfield and under the operations of the field $\mathbb{F}_{q^n}$ form a subfield of size $q$. Thus there are elements $(\alpha, \beta, \lambda) \in R^3$ such that $(\alpha+ \beta) \circ  \lambda \neq \alpha \circ \lambda + \beta \circ  \lambda. $ In the eGe algorithm described in the proof of Theorem \ref{th2}, the  \say{distributivity trick} requires that a triple of non-distributive elements (see equation (\ref{f})) be chosen at each  step and used in the creation of new rows. This leads us to investigate   the distributive elements of finite Dickson nearfields. 

\subsection{Some properties}Given $k  \in \{ 1, \ldots, n \},$  an  $H$-coset  shall refer to a coset of the form  $g^{[k]_q}H$. We have the following:

\begin{lem}
Let $R \in DN(q,n)$ where $(q,n)$ is a Dickson pair. Let $(\alpha, \beta) \in R^2.$  If  $\alpha, \beta, \alpha + \beta $ belong to the same $H$-coset, then 
$
(\alpha + \beta) \circ \lambda = \alpha \circ \lambda + \beta \circ \lambda$ for all $ \lambda \in R.$
\label{la}
\end{lem}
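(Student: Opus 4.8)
The plan is to exploit the fact that membership in a common $H$-coset forces all three of $\alpha$, $\beta$, $\alpha+\beta$ to act on $\lambda$ through the \emph{same} Frobenius power, after which the ordinary distributivity of the field $\mathbb{F}_{q^n}$ finishes the argument immediately.

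First I would record that each $H$-coset $g^{[k]_q}H$ is a subset of $\mathbb{F}_{q^n}^* = \langle g\rangle$ and hence contains no zero element. Thus the hypothesis that $\alpha$, $\beta$ and $\alpha+\beta$ all lie in the \emph{same} coset $g^{[k]_q}H$ guarantees at once that $\alpha\neq 0$, $\beta\neq 0$, $\alpha+\beta\neq 0$, and—crucially—that a single index $k\in\{1,\ldots,n\}$ serves for all three. Next I would invoke the explicit description of the Dickson multiplication: since the coupling map assigns to every element of $g^{[k]_q}H$ the same automorphism $\varphi^k$, for each $\lambda\in R$ we have $\alpha\circ\lambda=\alpha\cdot\lambda^{q^k}$, $\beta\circ\lambda=\beta\cdot\lambda^{q^k}$ and $(\alpha+\beta)\circ\lambda=(\alpha+\beta)\cdot\lambda^{q^k}$, with one and the \emph{same} exponent $q^k$. (This formula is valid even at $\lambda=0$, because $0^{q^k}=0$ and all three scalars are nonzero, so no separate case split is needed.)

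The computation then closes in one line: $(\alpha+\beta)\circ\lambda=(\alpha+\beta)\cdot\lambda^{q^k}=\alpha\cdot\lambda^{q^k}+\beta\cdot\lambda^{q^k}=\alpha\circ\lambda+\beta\circ\lambda$, where the middle equality is nothing but the left-distributivity of the field product $\cdot$ of $\mathbb{F}_{q^n}$. Since $\lambda$ was arbitrary, this is exactly the assertion.

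I do not expect any genuine obstacle here; the entire content of the lemma is the reduction \say{common coset $\Rightarrow$ common Frobenius exponent}, and field distributivity does the rest. The only points deserving a line of care are verifying that none of $\alpha,\beta,\alpha+\beta$ can be zero and confirming that the formula behaves correctly at $\lambda=0$, both of which follow from the single observation that $H$-cosets avoid $0$.
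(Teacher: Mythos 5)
Your proposal is correct and follows essentially the same route as the paper's own proof: a common $H$-coset gives a common Frobenius exponent $q^k$, so $(\alpha+\beta)\circ\lambda=(\alpha+\beta)\lambda^{q^k}=\alpha\lambda^{q^k}+\beta\lambda^{q^k}=\alpha\circ\lambda+\beta\circ\lambda$ by field distributivity. Your extra remarks on nonvanishing of $\alpha,\beta,\alpha+\beta$ and the case $\lambda=0$ are harmless additions the paper leaves implicit.
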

\begin{proof}Let $g$ be such that $\mathbb{F}_{q^n}^*= \big < g \big >$ and 
$H=\big < g^n \big >$. By the finite Dickson nearfield construction, the set of all $H$-cosets is presented as $
\mathbb{F}_{q^n}^*/ H = \big \lbrace  H,g^{[1]_q}H,\ldots, g^{[n]_q}  H\big \rbrace.$
 Assume that   $\alpha, \beta, \alpha + \beta \in g^{[k]_q}H$ for some $1 \leq k \leq n.$ 
Then 
\begin{align*}
(\alpha + \beta) \circ \lambda =(\alpha + \beta)  \lambda^{q^k} =\alpha \lambda^{q^k}  + \beta\lambda^{q^k}= \alpha \circ \lambda + \beta \circ \lambda,
\end{align*}
 for all $\lambda \in R.$
\end{proof}
The following is an immediate consequence of Lemma \ref{la}.
\begin{cor}
Let $R \in DN(q,n)$ and $(\alpha, \beta) \in R^2$ such that $\alpha, \beta, \alpha + \beta $ belong to the same $H$-coset. Then $C(D(\alpha, \beta))=D(R)$.
\label{corp}
\end{cor}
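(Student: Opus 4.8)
The plan is to observe that the hypothesis of this corollary is exactly the hypothesis of Lemma \ref{la}, and that Lemma \ref{la} is strong enough to collapse the generalized distributive set $D(\alpha,\beta)$ onto all of $R$. Once that collapse is established, the statement reduces immediately to the known identification $C(R)=D(R)$ supplied by Theorem \ref{thp}.

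First I would apply Lemma \ref{la} directly: since $\alpha$, $\beta$, and $\alpha+\beta$ lie in the same $H$-coset, the lemma guarantees that $(\alpha+\beta)\circ\lambda = \alpha\circ\lambda + \beta\circ\lambda$ holds for \emph{every} $\lambda\in R$, and not merely for some. By the defining condition of the generalized distributive set, this says precisely that each $\lambda\in R$ belongs to $D(\alpha,\beta)$, so $D(\alpha,\beta)=R$. Next, because $D(\alpha,\beta)$ is now literally all of $R$, its multiplicative center is the multiplicative center of $R$, that is, $C(D(\alpha,\beta))=C(R)$. Finally I would invoke Theorem \ref{thp}, which asserts that for a finite Dickson nearfield arising from the Dickson pair $(q,n)$ one has $C(R)=D(R)\cong\mathbb{F}_q$. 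Chaining these equalities gives $C(D(\alpha,\beta))=C(R)=D(R)$, which is the claim.

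I do not expect any genuine obstacle here, since the entire substantive content is already packaged in Lemma \ref{la}; the corollary is essentially a substitution once that lemma is in hand. The only point meriting a line of care is the interpretation of the symbol $C(D(\alpha,\beta))$ as the multiplicative center of the substructure $D(\alpha,\beta)$, by analogy with the definition of $C(R)$. This is harmless precisely because the first step shows $D(\alpha,\beta)$ to be the whole nearfield $R$, so its center is unambiguously $C(R)$ and the remaining reasoning is a direct appeal to Theorem \ref{thp}.
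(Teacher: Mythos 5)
Your proposal is correct and matches the paper's intent exactly: the paper states this corollary as an immediate consequence of Lemma \ref{la}, and the route you spell out ($D(\alpha,\beta)=R$ by Lemma \ref{la}, hence $C(D(\alpha,\beta))=C(R)=D(R)$ by Theorem \ref{thp}) is precisely the chain the paper itself uses when it later invokes this corollary. No gaps.
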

Now let us look at the case for  Dickson pairs $(q,n)$ where $n=2$. We have the following.
\begin{lem}
Let $(q,n)=(p^l,2)$ where $p$ is prime and $R \in DN(q,2)$. Let $(\alpha, \beta) \in R^2$ and  assume that $\alpha, \beta, \alpha + \beta $ don't belong all to the same $H$-cosets.  We have $(\alpha + \beta) \circ \lambda = \alpha \circ \lambda + \beta \circ  \lambda$ if and only if $\lambda \in D(R)$.
\label{thh}
\end{lem}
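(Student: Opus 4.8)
The plan is to establish the two directions of the equivalence separately, with all the content in the forward one. The reverse implication is immediate: if $\lambda \in D(R)$, then $(\alpha+\beta)\circ\lambda = \alpha\circ\lambda+\beta\circ\lambda$ holds for \emph{every} pair in $R^2$ by the definition of $D(R)$, in particular for the given $\alpha,\beta$, and no hypothesis on the cosets is needed. So the work is to show that, under the assumption that $\alpha,\beta,\alpha+\beta$ do not all lie in one $H$-coset, the identity $(\alpha + \beta) \circ \lambda = \alpha \circ \lambda + \beta \circ \lambda$ forces $\lambda$ to be distributive.

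First I would reduce the multiplication to a concrete form using $n=2$. Since $(q,2)$ is a Dickson pair, $2\mid q-1$, so by Lemma \ref{lehull} there are exactly two $H$-cosets, $g^{[1]_q}H=gH$ and $g^{[2]_q}H=H$. Because $\lambda^{q^2}=\lambda$ for every $\lambda\in\mathbb{F}_{q^2}$, the Dickson product collapses to $x\circ\lambda=x\lambda^{q}$ when $x\in gH$ and to $x\circ\lambda=x\lambda$ when $x\in H$. Writing $\mu:=\lambda^{q}$ and recalling from Theorem \ref{thp} (together with the construction, which shows $\mathbb{F}_q\subseteq D(R)$ and hence equality by cardinality) that $D(R)$ is exactly the subfield $\mathbb{F}_q=\{\lambda\in\mathbb{F}_{q^2}:\lambda^{q}=\lambda\}$, the forward implication reduces to a single assertion: the hypothesised identity forces $\lambda=\mu$.

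To see this, assume (as the statement implicitly requires, for the cosets to be defined) that $\alpha,\beta,\alpha+\beta$ are all nonzero, and write $e(x)\in\{1,2\}$ for the coset index of $x$, so that $x\circ\lambda=x\lambda^{q^{e(x)}}$. Moving everything to one side, the identity rearranges to $\alpha\big(\lambda^{q^{e(\alpha+\beta)}}-\lambda^{q^{e(\alpha)}}\big)+\beta\big(\lambda^{q^{e(\alpha+\beta)}}-\lambda^{q^{e(\beta)}}\big)=0$. Each bracketed difference equals $0$, $\lambda-\mu$, or $\mu-\lambda$ according to whether the two exponents coincide, so the left-hand side factors as $(\lambda-\mu)\,c$ with $c\in\{\pm\alpha,\pm\beta,\pm(\alpha+\beta)\}$. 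A short split on the value of $e(\alpha+\beta)$ shows that the hypothesis forbidding all three from sharing one coset is precisely what keeps $c$ among the nonzero elements $\alpha,\beta,\alpha+\beta$ up to sign: when $e(\alpha+\beta)=2$ one obtains $c\in\{\alpha,\beta,\alpha+\beta\}$, and when $e(\alpha+\beta)=1$ one obtains $c\in\{-\alpha,-\beta,-(\alpha+\beta)\}$. Since $\mathbb{F}_{q^2}$ is an integral domain and $c\neq 0$, we conclude $\lambda=\mu$, i.e. $\lambda^{q}=\lambda$, so $\lambda\in\mathbb{F}_q=D(R)$.

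The only delicate point, and the step I expect to require the most care, is verifying that the surviving coefficient $c$ never vanishes: I must check that under the \say{not all in one coset} hypothesis at most one of the two differences can be zero, so that a genuine nonzero multiple of $\lambda-\mu$ always remains. I would also flag explicitly the standing nonzero assumption on $\alpha,\beta,\alpha+\beta$, since if one of them were $0$ the identity would hold for every $\lambda$ and the \say{only if} direction would break down.
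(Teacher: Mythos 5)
Your proposal is correct and follows essentially the same route as the paper: write the Dickson product explicitly as $x\circ\lambda = x\lambda^{q^{e(x)}}$, rearrange the identity, and use the coset hypothesis to extract a nonzero multiple of $\lambda^q-\lambda$, forcing $\lambda\in\mathbb{F}_q=D(R)$. The only difference is presentational — the paper works out one representative case ($\alpha+\beta\in H$, $\alpha,\beta\in gH$) and declares the others similar, whereas you package all cases uniformly via the coefficient $c$ and explicitly flag the standing nonvanishing assumption; both are sound.
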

\begin{proof} Suppose $\alpha, \beta, \alpha + \beta$ belong to different $H$-cosets which means  $\alpha, \beta, \alpha + \beta $ are not all square and  not all non-square. We consider the case where $\alpha + \beta \in H$ and $\alpha, \beta \in gH$ (note that the other cases are similar).
If $(\alpha + \beta) \circ \lambda = \alpha \circ \lambda + \beta \circ  \lambda$  then we have $(\alpha+ \beta ) \lambda =\alpha \lambda^q +  \beta \lambda^q$. Thus $\lambda^{p^l}-\lambda=0$ and since every $\lambda \in \mathbb{F}_q$ is a solution of this equation, they are all the solutions. By Theorem \ref{thp} we have $\lambda \in D(R)$. The converse is straightforward.
\end{proof}

By Lemma \ref{thh} we deduce the following.
\begin{cor} Let $(q,2)$ be a Dickson pair with $q=p^l.$ Let $g$ be a generator of $\mathbb{F}_{q^2}^ *$ and  $R$  the finite nearfield constructed with $H= \big < g^2 \big > $.
For all pairs $(\alpha, \beta) \in R^2,$ $(\alpha + \beta) \circ \lambda = \alpha \circ \lambda + \beta \circ  \lambda$ if and only if either all $\alpha, \beta, \alpha + \beta$ belong to the same $H$-coset or $\lambda \in D(R).$
\label{cr}
\end{cor}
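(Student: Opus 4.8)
The plan is to obtain the corollary as a clean case split that combines Lemma \ref{la} and Lemma \ref{thh}, which together exhaust every configuration of the triple $\alpha,\beta,\alpha+\beta$. The key observation is that for a fixed pair $(\alpha,\beta)$ there are exactly two mutually exclusive possibilities: either $\alpha,\beta,\alpha+\beta$ all lie in one common $H$-coset, or they do not. These two possibilities are precisely the hypotheses of Lemma \ref{la} and Lemma \ref{thh} respectively, so no further configuration needs to be examined, and the work reduces to bookkeeping between the two lemmas.

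First I would prove the backward implication. Suppose the right-hand disjunction holds. If all of $\alpha,\beta,\alpha+\beta$ belong to the same $H$-coset, then Lemma \ref{la} gives $(\alpha+\beta)\circ\lambda=\alpha\circ\lambda+\beta\circ\lambda$ for every $\lambda\in R$, in particular for the given $\lambda$. If instead $\lambda\in D(R)$, then by the very definition of a distributive element the identity $(\alpha+\beta)\circ\lambda=\alpha\circ\lambda+\beta\circ\lambda$ holds for all $\alpha,\beta\in R$, hence for our chosen pair. Either way the distributive equation is satisfied.

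For the forward implication I would assume $(\alpha+\beta)\circ\lambda=\alpha\circ\lambda+\beta\circ\lambda$ and split on the coset dichotomy. If $\alpha,\beta,\alpha+\beta$ lie in a common $H$-coset we are immediately in the first disjunct and nothing further is needed. Otherwise they do not all lie in the same $H$-coset, so we are exactly in the setting of Lemma \ref{thh} with $n=2$; since the distributive equation is assumed to hold, the forward part of that lemma's equivalence forces $\lambda\in D(R)$, placing us in the second disjunct. This completes both directions.

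I expect essentially no serious obstacle, since the result is a direct corollary; the only points requiring care are logical rather than computational. One must verify that the dichotomy \say{same coset / not same coset} is genuinely exhaustive so that Lemmas \ref{la} and \ref{thh} jointly cover all cases, and one must respect the asymmetric logical form of the statement: the first branch of the right-hand side yields the distributive law for \emph{all} $\lambda$, whereas the second branch is a condition on $\lambda$ itself. Accordingly I would phrase the forward direction so that \emph{failing} the first branch is what triggers the application of Lemma \ref{thh}, rather than attempting a pointwise converse directly.
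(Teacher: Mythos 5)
Your proposal is correct and matches the paper's intent exactly: the paper states this corollary as an immediate deduction from Lemma \ref{thh} (together with Lemma \ref{la}), and your case split on whether $\alpha,\beta,\alpha+\beta$ share an $H$-coset, applying Lemma \ref{la} in one branch and the equivalence of Lemma \ref{thh} in the other, is precisely that deduction. No differences worth noting.
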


 Lemma \ref{thh} does not hold in   general for any finite Dickson nearfield $DN_g(q,n)$ where $(q,n)$ is a Dickson pair. The following example gives an illustration.
\begin{exa} Let $R \in DN(q,2)$ and $(\alpha, \beta) \in R^2$ where $\alpha, \beta, \alpha+\beta$ belong to different $H$-coset. Then by Lemma \ref{thh} and Corollary \ref{cr},  $(\alpha + \beta) \circ\lambda = \alpha \circ \lambda + \beta \circ  \lambda$  will always  lead to  the equation $\lambda ^{q}- \lambda=0$ and all the solutions will be in $\mathbb{F}_q.$
But  Lemma \ref{thh} can fail for any positive integer $n>2$. For instance given  $R = DN_g(5,4)$ and we take the irreducible polynomial $X^4+2$ of degree $4$ in $\mathbb{F}_5$ where $x$ is a root of $X^4+2 \in \mathbb{Z}_5[X]$. Let $g$ be such that  $\mathbb{F}_{5^4}^{*} = \big < g \big >$ and $H = \big < g^4 \big >$. The quotient group is represented by 
 \begin{align*}
\mathbb{F}_{5^4}^{ *} / H  =\big \lbrace  gH,  g^6H,g^{31}H,g^{156}H   \big \rbrace= \{H, gH,g^2H, g^3H \}.
\end{align*}
Let $\alpha,\beta \in  \mathbb{F}_{5^4},$ we have
\begin{align*}
\alpha \circ \beta  = 
\begin{cases}
 \alpha \cdot \beta   \thickspace  \text{if $\alpha \in H$}  \\
 \alpha \cdot \beta ^5  \thickspace    \text{if $ \alpha \in gH $} \\
 \alpha \cdot \beta ^{25}    \thickspace   \text{if $ \alpha \in g^2H $} \\ 
 \alpha \cdot \beta  ^{125}  \thickspace \text{if $ \alpha \in g^{3}H $}.
\end{cases} 
\end{align*}
 We consider $g=x+2$. Let $\alpha=3 \in H, \beta =x^2+2 \in H$. Then $\alpha + \beta \in g^2H.$ In fact $\lambda =x^2+1 \in g^2 H$ distributes over the pair $(\alpha, \beta)$. To see this, $(\alpha+ \beta) \circ \lambda= (3+x^2+2) \circ (x^2+1)=x^2+x^4.$ Also $\alpha \circ \lambda + \beta \circ \lambda= 3 \circ (x^2+1)+ (x^2+2) \circ (x^2+1)=x^4+x^2.$ But $\lambda \notin D(R)=\mathbb{F}_5.$ Note that $\lambda \notin D(R)$ but distributes over the pair $(\alpha,\beta).$ So $\lambda$ belong to a certain large distributive set of $R.$
 \label{rem}
\end{exa}
%
%
We now introduce the following generalized distributive set for a given pair in a nearfield.
\begin{defn}Let $R$ be a nearfield.
Given a pair $(\alpha, \beta) \in R^2,$ the generalized distributive set 
\begin{align*}
D(\alpha, \beta)= \big \{  \lambda \in R : (\alpha + \beta) \circ \lambda  = \alpha \circ \lambda + \beta \circ \lambda \big  \}
\end{align*} is the set of elements in $R$ that distribute over the pair $(\alpha, \beta).$
\end{defn}

Note that in the eGe algorithm, we have to choose at every  step of the creation of a new row a triple $(\alpha, \beta, \lambda) \in R^3$ such that $\lambda \notin D(\alpha, \beta)$ for the implementation of the \say{distributivity trick}. It is not difficult to see the following.
\begin{lem}Let $R$ be a nearfield. We have
\begin{align*}
\bigcap_{\alpha,\beta \in R}D(\alpha, \beta)=D(R).
\end{align*}
\end{lem}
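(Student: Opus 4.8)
The plan is to prove the equality by establishing both inclusions, since each side is defined by the same distributivity identity; the only difference between them is the logical placement of the universal quantifier over pairs $(\alpha,\beta)$, which is precisely what taking an intersection encodes.

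First I would unpack membership in the intersection. By definition, $\lambda \in \bigcap_{\alpha,\beta \in R} D(\alpha,\beta)$ means that $\lambda \in D(\alpha,\beta)$ for every choice of $\alpha,\beta \in R$; that is, $(\alpha+\beta)\circ\lambda = \alpha\circ\lambda+\beta\circ\lambda$ holds for all $\alpha,\beta \in R$. This is verbatim the defining condition for $\lambda \in D(R)$, giving the inclusion $\bigcap_{\alpha,\beta \in R} D(\alpha,\beta) \subseteq D(R)$.

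For the reverse inclusion, I would take any $\lambda \in D(R)$, so that the distributivity identity holds for all pairs. Fixing an arbitrary pair $(\alpha,\beta)$ then shows $\lambda \in D(\alpha,\beta)$, and since the pair was arbitrary, $\lambda$ lies in the intersection. Hence $D(R) \subseteq \bigcap_{\alpha,\beta \in R} D(\alpha,\beta)$, completing the proof.

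I do not anticipate any genuine obstacle here: the statement is a direct translation between the family of single-pair conditions and the global condition defining $D(R)$, so the whole argument is a matter of interchanging the phrase \say{for all $(\alpha,\beta)$, $\lambda$ satisfies the identity} with \say{$\lambda$ lies in every $D(\alpha,\beta)$}. The one point to handle carefully is matching the quantifiers exactly, ensuring that the intersection ranges over all of $R^2$ (including the degenerate cases where $\alpha$, $\beta$, or $\alpha+\beta$ vanishes), which is already built into the definition of $D(\alpha,\beta)$ and of $D(R)$.
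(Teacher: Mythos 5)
Your proof is correct and is exactly the definitional unpacking the paper has in mind: the paper states this lemma without proof (prefacing it with \say{it is not difficult to see}), and your two inclusions simply make explicit the quantifier exchange that justifies that remark. No issues.
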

\subsection{Some results on $D(\alpha,\beta)$ where $\alpha, \beta \in DN_g(q,n)$} 
We remind the reader that by definition a subset $S \subseteq \mathbb{F}_{p^n} $  that is a field is a subfield of the finite field $\mathbb{F}_{p^n}$. Also the subfields of $\mathbb{F}_{p^n}$ are the fields $\mathbb{F}_{p^k}$ where $k$ divides $n.$

Our aim is to determine $D(\alpha, \beta)$ where $(\alpha, \beta) \in R^2$  for $R$  a finite Dickson nearfield that arises from the Dickson pair $(q,n).$ Note that if $n=1$  then $DN(q,1)$ is the set of all finite fields of order $q$. Hence in this case $D(\alpha, \beta)= \mathbb{F}_{q}$ and also $C \big (D(\alpha, \beta) \big )=\mathbb{F}_{q}$. 

 From Lemma \ref{la}, Lemma \ref{thh} and Theorem \ref{tp}, we  deduce the following case  where $n=2$.
\begin{lem}
Let $(q,2)$ be a Dickson pair with $q=p^l$ for some prime $p$ and integer $l.$ Let $g$ be a generator of $\mathbb{F}_{q^2}^*$ and let $R$ be the nearfield constructed with $H = \big < g^2 \big >.$ Let $ (\alpha, \beta) \in R^2.$ Then
\begin{itemize}
\item[(i)]  $D(\alpha, \beta)$ is a subnearfield of $R$.
\item[(ii)] $C \big (D(\alpha, \beta) \big )$ is a subnearfield of $R$.
\end{itemize}

\end{lem}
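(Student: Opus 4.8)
The plan is to reduce both parts to a dichotomy: for a Dickson pair $(q,2)$ the set $D(\alpha,\beta)$ can only equal $R$ or $D(R)$. First I would record that for $(q,2)$ to be a Dickson pair the prime $2$ must divide $q-1$, so $q$ is odd and hence $q^2\equiv 1\pmod 4$; in particular $-1$ is a square in $\mathbb{F}_{q^2}^*$, i.e.\ $-1\in H$, and $H$ has index two. Now fix $(\alpha,\beta)\in R^2$. If $\alpha,\beta,\alpha+\beta$ all lie in one common $H$-coset, Lemma~\ref{la} gives $D(\alpha,\beta)=R$; otherwise Lemma~\ref{thh} (the $n=2$ statement) gives that $\lambda$ distributes over $(\alpha,\beta)$ exactly when $\lambda\in D(R)$, so $D(\alpha,\beta)=D(R)$. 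The degenerate situations where one of the three elements vanishes I would dispose of by hand: if $\alpha=0$ or $\beta=0$ then $D(\alpha,\beta)=R$ by direct substitution, and if $\alpha+\beta=0$ then $\beta=-\alpha$ lies in the same $H$-coset as $\alpha$ (because $-1\in H$), whence $(-\alpha)\circ\lambda=-(\alpha\circ\lambda)$ and again $D(\alpha,\beta)=R$. Thus $D(\alpha,\beta)\in\{R,D(R)\}$ in every case.

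Part (i) then follows immediately: $R$ is trivially a subnearfield of itself, and $D(R)$ is a subnearfield of $R$ by Theorem~\ref{tp}.

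For part (ii) I would apply the centralizer $C(-)$, read as $C(S)=\{x\in R:\ x\circ y=y\circ x\ \text{for all}\ y\in S\}$, to each of the two possible values. If $D(\alpha,\beta)=R$, then $C(D(\alpha,\beta))=C(R)=D(R)$ by Theorem~\ref{thp}, which is a subnearfield by Theorem~\ref{tp} (this also recovers Corollary~\ref{corp}). If $D(\alpha,\beta)=D(R)$, then $C(D(\alpha,\beta))=C(D(R))$ is precisely the generalized center $GC(R)$ by definition, and $GC(R)$ is a subnearfield of $R$ by Lemma~\ref{lem41}. Hence $C(D(\alpha,\beta))$ is a subnearfield in either case. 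The statement is essentially a corollary of the earlier lemmas, so I expect no serious obstacle; the only points needing care are checking that the two $H$-coset alternatives are genuinely exhaustive once the vanishing cases are removed, and identifying $C(D(R))$ with $GC(R)$ so that Lemma~\ref{lem41} applies verbatim.
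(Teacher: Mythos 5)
Your proposal is correct and follows essentially the same route as the paper: both establish the dichotomy $D(\alpha,\beta)\in\{R,D(R)\}$ via Lemma~\ref{la} and Lemma~\ref{thh} and then invoke Theorem~\ref{tp}, Theorem~\ref{thp} and Lemma~\ref{lem41} for the two parts. Your treatment is in fact slightly more careful than the paper's (you handle the degenerate cases $\alpha=0$, $\beta=0$, $\alpha+\beta=0$ explicitly using $-1\in H$); the only divergence is notational, in that you read $C(D(R))$ as the centralizer $GC(R)$ in $R$ while the paper takes the center of the subnearfield, $D(R)\cap GC(R)$ --- either way the conclusion that it is a subnearfield holds.
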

\begin{proof} ~
\begin{itemize}
\item[(i)] Let $R \in DN(q,2)$ and $ \alpha, \beta \in R.$
We have $D(\alpha, \beta)=R$ or $D(\alpha, \beta)=D(R)$. So  $D(\alpha, \beta)$ is a subnearfield of $R.$
\item[(ii)] Suppose $\alpha, \beta, \alpha + \beta $ belong all to the same $H$-coset. Then by Corollary \ref{corp} we have $C \big (D(\alpha, \beta) \big )=C(R)=D(R)$ and by Theorem \ref{tp} $C \big (D(\alpha, \beta) \big )$ is a subnearfield of $R$. Furthermore, suppose $\alpha, \beta, \alpha + \beta $ don't belong all to the same $H$-coset. We have $D(\alpha, \beta)=D(R)$. Hence by Lemma \ref{lem41} 
\begin{align*}
C \big (D(\alpha, \beta) \big )= C \big (D(R) \big )= D(R) \cap GC(R)
\end{align*}
 is a subnearfield of $R$.

\end{itemize}

\end{proof}

Furthermore it is not difficult to see the following. Let $(q,n)$ be a Dickson pair with $q=p^l$ for some prime $p$ and integers $l,n.$ Let $g$ be a generator of $\mathbb{F}_{q^n} ^*$ and let $R$ be the nearfield constructed with $H = \big < g^n \big >.$ Then for each positive integer $h$ there exists a unique  subfield of $\mathbb{F}_{q^n}$  of order $p^h$   such that  $g^{\frac{q^n-1}{p^{h}-1}}$ generates its multiplicative group. In the next theorem we shall give a sufficient condition on $\alpha, \beta$ for which $D(\alpha, \beta)$ is a subfield of $\mathbb{F}_{q^n}.$ 
\begin{thm}
Let $(q,n)$ be a Dickson pair with $q=p^l$ for some prime $p$ and positive integers $l,n$ such that $n >2.$ Let $g$ be a generator of $\mathbb{F}_{q^n}^*$ and  $R$  the finite nearfield constructed with $H = \big < g^n \big >.$ Let $ \alpha, \beta \in R.$ If at least two of $\alpha, \beta, \alpha + \beta$ are in the same $H$-coset then $D(\alpha,\beta)$  is a subfield of  $\mathbb{F}_{q^{n}}$ of order $p^h$ for some $h$ dividing $l \cdot n.$
\label{thmm}
\end{thm}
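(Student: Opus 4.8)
The plan is to unwind the definition of $D(\alpha,\beta)$ into a single polynomial equation over $\mathbb{F}_{q^n}$ and then recognise its solution set as the fixed field of a power of the Frobenius map $\varphi\colon x\mapsto x^q$. First I would dispose of the degenerate cases: if $\alpha=0$ or $\beta=0$ then the defining identity holds for every $\lambda$, so $D(\alpha,\beta)=R=\mathbb{F}_{q^n}$, a subfield of order $p^{ln}$; similarly, if $\alpha+\beta=0$ the hypothesis forces $\alpha$ and $-\alpha$ to lie in a common $H$-coset (so $-1\in H$), and a direct check again gives $D(\alpha,\beta)=R$. Assuming then that $\alpha,\beta,\alpha+\beta$ are all nonzero, I would write $\alpha\in g^{[a]_q}H$, $\beta\in g^{[b]_q}H$, $\alpha+\beta\in g^{[c]_q}H$, so that by the construction of $\circ$ the condition $(\alpha+\beta)\circ\lambda=\alpha\circ\lambda+\beta\circ\lambda$ becomes
\[
(\alpha+\beta)\lambda^{q^c}=\alpha\lambda^{q^a}+\beta\lambda^{q^b}.
\]

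Next I would split according to which two of $a,b,c$ coincide, as guaranteed by the hypothesis. If $a=b$, the right-hand side is $(\alpha+\beta)\lambda^{q^a}$, so the equation reduces to $(\alpha+\beta)\big(\lambda^{q^c}-\lambda^{q^a}\big)=0$; since $\alpha+\beta\neq 0$ this is $\lambda^{q^c}=\lambda^{q^a}$. If $a=c$, cancelling $\alpha\lambda^{q^a}$ leaves $\beta\lambda^{q^a}=\beta\lambda^{q^b}$, hence $\lambda^{q^a}=\lambda^{q^b}$ because $\beta\neq 0$; the case $b=c$ is symmetric and again yields $\lambda^{q^a}=\lambda^{q^b}$. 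Thus in every case $D(\alpha,\beta)$ is exactly the set of $\lambda\in\mathbb{F}_{q^n}$ satisfying $\lambda^{q^i}=\lambda^{q^j}$ for suitable $i,j\in\{1,\ldots,n\}$.

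Finally, writing $d=|i-j|$, I would use that $\varphi$ is an automorphism of $\mathbb{F}_{q^n}$ to rewrite $\varphi^i(\lambda)=\varphi^j(\lambda)$ as $\varphi^{d}(\lambda)=\lambda$. The solution set is then the fixed field of the cyclic subgroup $\langle\varphi^{d}\rangle\leq\mathrm{Gal}(\mathbb{F}_{q^n}/\mathbb{F}_q)$, namely $\mathbb{F}_{q^{\gcd(d,n)}}$, a subfield of $\mathbb{F}_{q^n}$ of order $p^{h}$ with $h=l\gcd(d,n)$, which divides $l\cdot n$ since $\gcd(d,n)\mid n$ (the all-equal subcase $d=0$ recovers $D(\alpha,\beta)=\mathbb{F}_{q^n}$ via $\gcd(0,n)=n$). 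The computation itself is routine; the only points requiring genuine care are the degenerate cases above, in particular the $\alpha+\beta=0$ subtlety, and the justification that $\{\lambda:\varphi^{d}(\lambda)=\lambda\}$ is truly a subfield of the prescribed order, for which the bijectivity of $\varphi$ and the Galois correspondence for finite fields are the essential inputs.
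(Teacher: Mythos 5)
Your proposal is correct, and it follows the same overall skeleton as the paper's proof (reduce the distributivity identity to an equation $\lambda^{q^i}=\lambda^{q^j}$, then recognise the solution set as a subfield), but the decisive step is executed quite differently. The paper, after reaching $\lambda^{q^t-q^s}=1$ for $\lambda\neq 0$, repeatedly raises to the power $q$ to shift the exponents into the form $\lambda^{q^r}=\lambda$, and then identifies the solution set of $\lambda^{p^k}=\lambda$ in $\mathbb{F}_{p^m}$ by a hands-on computation: writing $\lambda=g^a$, invoking Bezout's identity for $\gcd(p^m-1,p^k-1)=p^{\delta}-1$ with $\delta=\gcd(m,k)$, and verifying that the solutions are exactly $\{0\}\cup\langle g^{(p^m-1)/(p^{\delta}-1)}\rangle=\mathbb{F}_{p^{\delta}}$. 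You instead apply $\varphi^{-j}$ directly (legitimate since the Frobenius is an automorphism, and this also dispenses with the separate treatment of $\lambda=0$) to get $\varphi^{d}(\lambda)=\lambda$ with $d=|i-j|$, and then quote that the fixed set of $\langle\varphi^{d}\rangle=\langle\varphi^{\gcd(d,n)}\rangle$ in $\mathbb{F}_{q^n}$ is $\mathbb{F}_{q^{\gcd(d,n)}}$; since $t+n-s\equiv t-s\pmod{n}$, your $\mathbb{F}_{q^{\gcd(d,n)}}$ agrees with the paper's $\mathbb{F}_{p^{l\gcd(t+n-s,n)}}$. Your route is shorter and less error-prone; the paper's buys an explicit generator $g^{(p^m-1)/(p^{\delta}-1)}$ of the multiplicative group of $D(\alpha,\beta)$, which it reuses elsewhere. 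Two further points in your favour: you treat all three coincidence patterns ($a=b$, $a=c$, $b=c$) explicitly where the paper does one and declares the others similar, and you handle the degenerate possibilities $\alpha=0$, $\beta=0$, $\alpha+\beta=0$, which the paper's phrasing in terms of $H$-cosets silently excludes.
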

\begin{proof} ~

Assume that  $\alpha, \beta, \alpha + \beta$ are all in the same $H$-coset. Then By Lemma \ref{la}, $(\alpha + \beta) \circ \lambda = \alpha \circ  \lambda+ \beta \circ \lambda$ for all $\lambda \in R.$ Hence $D(\alpha, \beta) $ coincides with  $\mathbb{F}_{q^n}$.

Assume now that exactly two of $\alpha, \beta, \alpha + \beta$ are in the same $H$-coset. We consider the case where   $ \alpha, \beta \in g^{[s]_q}H$ and $\alpha + \beta \in  g^{[t]_q}H $ for $s \neq t$ (note that the other cases are similar). 
Then $
(\alpha + \beta ) \circ \lambda = (\alpha + \beta) \lambda^{q^t}= \alpha \lambda ^{q^t} + \beta \lambda ^{q^t}$.  Also
$
\alpha \circ \lambda + \beta \circ \lambda = \alpha \lambda^{q^s}+ \beta \lambda^{q^s}$.
Hence
 \begin{align*}
 (\alpha + \beta ) \circ \lambda =\alpha \circ \lambda + \beta \circ \lambda  \Rightarrow (\alpha + \beta) (\lambda ^{q^t}- \lambda ^{q^s})=0
 \end{align*}
 and then $\lambda =0 $ is solution of the equation. Now suppose $\lambda \neq 0,$  so
 \begin{align*}
 (\alpha + \beta) (\lambda ^{q^t}- \lambda ^{q^s})=0 \Rightarrow \lambda ^{q^t-q^s}-1=0.
\end{align*} 
It follows that
\begin{align*}
( \lambda ^{q^t-q^s}-1 \big )^q=0 \Rightarrow  \lambda ^{q^{t+1}-q^{s+1}}-1=0.
\end{align*}
Continuing this procedure (raising to the power $q^\epsilon$ such that $n= s +\epsilon$) we obtain 
\begin{align*}
\lambda ^{q^{t+ \epsilon}-q^{s + \epsilon}}-1=0.
\end{align*}
 Hence, we  have $\lambda ^{q^r-q^n}-1=0$ where $r=t+ \epsilon$ and since $ \lambda ^{q^n}= \lambda$, we get $\lambda^{q^r}-\lambda=0.$  We know that   $q=p^l$ where $p$ is a prime number and $l,n$ are positive integers. Hence we get the following equation 
 \begin{align*}
 (\Sigma):\lambda^{p^{k}}-\lambda=0 \thickspace \mbox{where}  \thickspace \lambda \in \mathbb{F}_{p^{m}}
 \end{align*} for some positive integers $k=l \cdot r$ and $m=l \cdot n$.
\begin{itemize}
\item[(i)]  Suppose $k$ divides $m$. Then there exists exactly one subfield  of  $\mathbb{F}_{p^m} $ which is isomorphic to $\mathbb{F}_{p^k}.$ So all the solutions of   $(\Sigma)$ are in $\mathbb{F}_{p^m}$. Hence  $D(\alpha, \beta) $ coincides with $\mathbb{F}_{p^{k}}$ which is  a subfield of $\mathbb{F}_{p^{m}}$. 
\item[(ii)] 
Suppose $k$ does not divide $m$. Note that $\lambda=0$ is a solution of $(\Sigma)$. Set $\delta =\textsc{gcd}(m,k)$.  Now let $\lambda \in \mathbb{F}_{p^m}^*$ be a solution of $(\Sigma)$. So $\lambda=g^a$ for some  $ 0 \leq a < p^m-1$ and $g^{a(p^k)}-g^a=0.$ Then $g^{a(p^k-1)}=1$. We have,
\begin{align*}
(p^m -1) \thickspace \mbox{divides} \thickspace a (p^k-1).
\end{align*}
 So there exists an integer $t$ such that $a(p^k-1)=t(p^m-1).$ Since $\delta $ divides $m $  there exists $\theta \in \mathbb{N}$ such that $m= \delta \theta.$ Also since $\delta$ divides $ k $  there exists $\theta' \in \mathbb{N}$ such that $k= \delta \theta'.$ So we have $ p^m-1 = (p^{\delta}-1) \big (  (p^{\delta}) ^{\theta-1} + \ldots + p^{\delta}+ 1 \big )$ and $ p^k-1 = (p^{\delta}-1) \big (  (p^{\delta}) ^{\theta'-1} + \ldots + p^{\delta}+ 1 \big ).$  Furthermore, since  
\begin{align*}
\textsc{gcd} (p^m-1, p^k-1)=p^{\delta}-1,
\end{align*} 
 by Bezout's identity there exists some  integers $u$ and $v$ such that 
\begin{align*}
u(p^m-1)+ v(p^k-1)=p^{\delta}-1.
\end{align*} 
Hence substituting $a(p^k-1)=t(p^m-1)$ we get
 \begin{align*}
 au(p^m-1)+ vt(p^m-1)=a(p^{\delta}-1).
\end{align*} 
Thus $(p^m-1) $ divides $a(p^{\delta}-1).$ It follows that $ \frac{p^m-1}{p^\delta-1} $ divides  $a$. So  $ a = \frac{p^m-1}{p^\delta-1} b$ where $b \in \mathbb{N}.$ Now, 
\begin{align*}
0 \leq a < p^m-1 \Leftrightarrow  0 \leq b < p^{\delta}-1.
\end{align*} 
 Reciprocally,  let $ 0 \leq b < p^\delta-1$ and  $\lambda_0=g^a.$  We know that  $a = \frac{p^m-1}{p^\delta-1}b$  and $p^k-1= t'(p^\delta-1)$  for some  integer $t'$. We have, 
 \begin{align*}
 \lambda_0^{p^k-1}-1 &= (g^a)^{t'(p^{\delta}-1)}-1 \\
  & = g^{(p^m-1)t'b}-1 \\
  &= 1^{t'b}-1 =0.
 \end{align*} It follows that $\lambda_0$ is a solution of the equation $\lambda^{p^k}-\lambda=0$. Hence the set of solutions (denoted as $S(\Sigma)$) of the equation  $(\Sigma)$  for $\lambda \in \mathbb{F}_{p^{m}}$ are presented as
\begin{align*}
S(\Sigma)= \left \{ 0 \right \} \cup \left \{  g^{ \frac{p^m-1}{p^{\delta}-1}b}: \thickspace 0 \leq b  < p^{\delta}-1 \right \}.
\end{align*} 
The elements of $S(\Sigma)$ are all distinct since $g^{ \frac{p^m-1}{p^{\delta}-1}}$ is a generator of the multiplicative group of the subfield $\mathbb{F}_{p^{\delta}}$ of  $\mathbb{F}_{p^m}$ (since $\delta$ divides $m$). Hence  
\begin{align*}
\vert S(\Sigma) \vert= p^{\delta}-1+1=p^{\delta} =p^{l  \cdot \textsc{gcd}(t+n-s,n)}.
\end{align*}
 So all the solutions of $(\Sigma)$ are in the  finite field of order $p^\delta$. Hence $D(\alpha,\beta)$ coincides with $S(\Sigma)  = \mathbb{F}_{p^ \delta}$. 
\end{itemize}

\end{proof}

\begin{exa}  Suppose  $R = DN_g(5,4)$ with the generator $g=x+2$ (see Example \ref{rem} regarding the construction of $DN_g(5,4)$). Let $\alpha, \beta \in R$ such that $\alpha= x+2 \in gH$ and $\beta =x^3+x^2+2x+3 \in g^2H$ where $x$ is the root of the irreducible polynomial $X^4+2 \in \mathbb{Z}_5[X]$. Then $\alpha +\beta \in gH.$ There exist $\lambda_1= x^2+3, \thickspace \lambda_2= 3x^2+2 \in D(\alpha, \beta)$ such that $ \lambda_1 \in H$ and $ \lambda_2 \in g^2H.$   So the distributive elements $\lambda_1$ and $\lambda_2$ don't belong necessarily to the same $H$-coset.
\end{exa}

Note that if $R \in DN(q,n)$ and $(\alpha, \beta) \in R^2$ then it is not difficult to see that $D(R) \subseteq D(\alpha, \beta) \cap H.$ Moreover we  deduce the following:
\begin{lem} Let $(q,n)$ be a Dickson pair with $q=p^l$ for some prime $p$ and integers $l,n.$ Let $g$ be a generator of $\mathbb{F}_{q^n} ^*$ and let $R$ be the nearfield constructed with $H = \big < g^n \big >.$
Let $(\alpha, \beta, \alpha+ \beta) \in g^{[r]_q}H \times g^{[s]_q}H \times H $ where $s \neq r$  and $r,s$ divide $n$. Suppose  that $n$  divides $\frac{q^n-1}{q^r-1} $ or $n$ divides $ \frac{q^n-1}{q^s-1} $.  Let $S$ be the subset of $\mathbb{F}_{q^n}$ defined by
\begin{align*}
S= \left \{ g^m : q^n-1  \thickspace \mbox{divides} \thickspace m(q^r-1) \thickspace \mbox{and} \thickspace q^n-1  \thickspace \mbox{divides} \thickspace m(q^s-1) \thickspace \mbox{where} \thickspace 0 \leq m < q^n-1 \right \}.
\end{align*}  Then $S \subseteq  D(\alpha, \beta) \cap H$.  
\end{lem}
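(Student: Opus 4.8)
The plan is to reduce the defining condition of $D(\alpha,\beta)$ to a pair of Frobenius fixed-point equations and then read off both membership claims directly. First I would write out the twisted multiplication explicitly for the three prescribed cosets. Since $\alpha \in g^{[r]_q}H$ and $\beta \in g^{[s]_q}H$, the Dickson construction gives $\alpha \circ \lambda = \alpha \lambda^{q^r}$ and $\beta \circ \lambda = \beta \lambda^{q^s}$ for every $\lambda \in R$ (here $r,s \in \{1,\dots,n\}$ since $r,s$ divide $n$). Because $\alpha+\beta \in H = g^{[n]_q}H$ and $\lambda^{q^n}=\lambda$ for all $\lambda \in \mathbb{F}_{q^n}$, we also have $(\alpha+\beta)\circ\lambda = (\alpha+\beta)\lambda^{q^n} = (\alpha+\beta)\lambda$. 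Hence $\lambda \in D(\alpha,\beta)$ if and only if $(\alpha+\beta)\lambda = \alpha\lambda^{q^r} + \beta\lambda^{q^s}$.

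Next I would show that every $\lambda = g^m \in S$ satisfies this identity. The two divisibility conditions defining $S$, namely $q^n-1 \mid m(q^r-1)$ and $q^n-1 \mid m(q^s-1)$, say exactly that $(g^m)^{q^r-1}=1$ and $(g^m)^{q^s-1}=1$, that is, $\lambda^{q^r}=\lambda$ and $\lambda^{q^s}=\lambda$. Substituting these into the right-hand side collapses it to $\alpha\lambda+\beta\lambda=(\alpha+\beta)\lambda$, so the distributivity identity holds and $\lambda \in D(\alpha,\beta)$.

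It remains to verify $S \subseteq H$, which is where the extra hypothesis is used. Since $r \mid n$ we have $q^r-1 \mid q^n-1$, so $A_r := \frac{q^n-1}{q^r-1}$ is a positive integer, and writing $q^n-1 = A_r(q^r-1)$ shows that the condition $q^n-1 \mid m(q^r-1)$ is equivalent to $A_r \mid m$; likewise $q^n-1 \mid m(q^s-1)$ is equivalent to $A_s := \frac{q^n-1}{q^s-1} \mid m$. By hypothesis $n \mid A_r$ or $n \mid A_s$, and in either case the corresponding divisibility forces $n \mid m$. Therefore $g^m = (g^n)^{m/n} \in \langle g^n \rangle = H$. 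Combining the two preceding paragraphs yields $S \subseteq D(\alpha,\beta)\cap H$, as claimed.

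I do not expect a genuine obstacle: the argument is a direct unwinding of the construction, and the only point requiring a little care is the equivalence between the multiplicative condition $q^n-1 \mid m(q^r-1)$ and the additive divisibility $A_r \mid m$, which relies on $r \mid n$ so that $q^r-1$ divides $q^n-1$ cleanly (and similarly for $s$). One should also confirm at the outset that $r,s \in \{1,\dots,n\}$, guaranteed by $r,s \mid n$, so that $\alpha$ and $\beta$ legitimately sit in cosets $g^{[r]_q}H$ and $g^{[s]_q}H$ with the stated Frobenius exponents.
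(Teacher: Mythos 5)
Your proposal is correct and follows essentially the same route as the paper: both verify membership in $D(\alpha,\beta)$ by observing that the divisibility conditions force $\lambda^{q^r}=\lambda$ and $\lambda^{q^s}=\lambda$ so the distributivity identity collapses, and both deduce $S\subseteq H$ from the hypothesis that $n$ divides $\frac{q^n-1}{q^r-1}$ or $\frac{q^n-1}{q^s-1}$ (the paper phrases this via the lcm of the two quotients, you via direct divisibility of $m$, which is an immaterial difference).
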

\begin{proof} Suppose $(\alpha, \beta, \alpha+ \beta) \in g^{[r]_q}H \times g^{[s]_q}H \times H $  where $s \neq r$  and $r,s$ divide $n$.  Let $\lambda \in D(\alpha, \beta).$ So $\lambda \in \mathbb{F}_{q^n}$ and $\alpha (\lambda - \lambda^{q^r})= \beta( \lambda^{q^s} - \lambda)$. Let $\lambda \in \mathbb{F}_{q^n}^*$, then $\lambda =g^m$ for some $0 \leq m < q^n-1.$ In fact the elements of $S$ are solutions of $\alpha (\lambda - \lambda^{q^r})= \beta( \lambda^{q^s} - \lambda)$.
To see this, we have
 \begin{align*}
\alpha (g^m-g^{mq^r}) -\beta (g^{mq^s}-g^m)= \alpha g^m(1-g^{m(q^r-1)})- \beta g^m(1-g^{m(q^s-1)}).
\end{align*}
Since $q^n-1$ divides $m(q^r-1)$ then there exists a positive integer $u$ such that $m(q^r-1)=u(q^n-1)$. Also since $q^n-1$ divides $m(q^s-1)$ then there exists a positive integer $v$ such that $m(q^s-1)=v(q^n-1)$. So we have,
\begin{align*}
\alpha g^m(1-g^{m(q^r-1)})- \beta g^m(1-g^{m(q^s-1)}) & =\alpha g^m(1-(g^{q^n-1})^u)- \beta g^m(1-(g^{q^n-1})^v) \\
&= \alpha g^m(1-1) - \beta g^m (1-1)=0.
\end{align*}
So $S \subseteq D(\alpha, \beta)$. Furthermore,
 \begin{align*}
S & = \left \{ g^m : \frac{q^n-1}{q^r-1}  \thickspace \mbox{divides} \thickspace m \thickspace \mbox{and} \thickspace \frac{q^n-1}{q^s-1}  \thickspace \mbox{divides} \thickspace m  \right \} \\
&=\left \{ g^m : m= a  \big ( \textsc{lcm} (  \frac{q^n-1}{q^r-1}, \frac{q^n-1}{q^s-1}) \big )  \thickspace \mbox{for} \thickspace a \in \mathbb{N} \right \}  \\
&=\left \{  g^{a  \big ( \textsc{lcm} (\frac{q^n-1}{q^r-1},\frac{q^n-1}{q^s-1}) \big )}  :  a  \in \mathbb{N} \right \}.
\end{align*}
 Since $n$ divides $\frac{q^n-1}{q^r-1}$ or  $n$ divides $ \frac{q^n-1}{q^s-1} $ then there exists a positive integer $b$ such that  $\textsc{lcm}(\frac{q^n-1}{q^r-1},\frac{q^n-1}{q^s-1})=n b$. Thus $S=\{ (g^n)^{ab}: (a,b) \in \mathbb{N}^2\} \subseteq H.$

\end{proof}

We now look at $D(\alpha, \beta)$  where $\alpha, \beta, \alpha + \beta$ are all in distinct $H$-cosets. We deduce the following construction of subfield of $\mathbb{F}_{q^n}$ by making use of $D(\alpha, \beta)$.
\begin{thm}
Let $(q,n)$ be a Dickson pair with $q=p^l$ for some prime $p$ and integers $l,n$  such that $n >2.$  Let $g$ be a generator of $\mathbb{F}_{q^n}^*$ and  $R$  the finite  nearfield constructed with $H = \big < g^n \big >.$ Suppose $(r,s,t) \in \mathbb{N}^3$ such that $0 < t< s< r \leq n$ where  $r-s $ divides $   n$ and $r-t$ divides $ n$.  Let  $ (\alpha, \beta , \alpha + \beta )\in g^{[r]_q}H \times g^{[s]_q}H \times g^{[t]_q}H$. Let $F_{r,s,t}(\alpha, \beta)$ be the subset   of  $\mathbb{F}_{q^n}$ defined by
\begin{align*}
F_{r,s,t}(\alpha, \beta)= D(\alpha, \beta) \cap \mathbb{F}_{q^{r-s}} \cap  \mathbb{F}_{q^{r-t}}.
\end{align*}
 Then $F_{r,s,t}(\alpha, \beta)$ is a subfield of  $\mathbb{F}_{q^n}.$ 
\label{pr}
\end{thm}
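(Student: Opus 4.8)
The plan is to show that the defining relation for $D(\alpha,\beta)$ becomes an identity on the intersection $\mathbb{F}_{q^{r-s}} \cap \mathbb{F}_{q^{r-t}}$, so that $F_{r,s,t}(\alpha,\beta)$ collapses to this intersection of subfields, which is automatically a field.

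First I would translate membership in $D(\alpha,\beta)$ into an explicit polynomial equation over $\mathbb{F}_{q^n}$. Since $\alpha \in g^{[r]_q}H$, $\beta \in g^{[s]_q}H$ and $\alpha+\beta \in g^{[t]_q}H$, the Dickson multiplication gives $\alpha \circ \lambda = \alpha\lambda^{q^r}$, $\beta \circ \lambda = \beta\lambda^{q^s}$ and $(\alpha+\beta)\circ\lambda = (\alpha+\beta)\lambda^{q^t}$. Hence $\lambda \in D(\alpha,\beta)$ if and only if
\[
(\alpha+\beta)\lambda^{q^t} = \alpha\lambda^{q^r} + \beta\lambda^{q^s}.
\]

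Next, the two divisibility hypotheses $r-s \mid n$ and $r-t \mid n$ guarantee that $\mathbb{F}_{q^{r-s}}$ and $\mathbb{F}_{q^{r-t}}$ are genuine subfields of $\mathbb{F}_{q^n}$. The key observation is that for any $\lambda$ in their intersection the three Frobenius images coincide: from $\lambda^{q^{r-s}} = \lambda$ we get $\lambda^{q^r} = (\lambda^{q^{r-s}})^{q^s} = \lambda^{q^s}$, and likewise $\lambda^{q^{r-t}} = \lambda$ gives $\lambda^{q^r} = \lambda^{q^t}$, so that $\lambda^{q^r} = \lambda^{q^s} = \lambda^{q^t}$. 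Substituting this common value into the displayed equation makes both sides equal to $(\alpha+\beta)\lambda^{q^t}$, so the relation holds identically. Thus $\mathbb{F}_{q^{r-s}} \cap \mathbb{F}_{q^{r-t}} \subseteq D(\alpha,\beta)$.

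Finally I would conclude that $F_{r,s,t}(\alpha,\beta) = D(\alpha,\beta) \cap \mathbb{F}_{q^{r-s}} \cap \mathbb{F}_{q^{r-t}} = \mathbb{F}_{q^{r-s}} \cap \mathbb{F}_{q^{r-t}} = \mathbb{F}_{q^{\textsc{gcd}(r-s,\,r-t)}}$, invoking the standard description of intersections of subfields of a finite field; since $\textsc{gcd}(r-s,r-t)$ divides $r-s$ and hence $n$, this is indeed a subfield of $\mathbb{F}_{q^n}$. There is no serious obstacle here: the only points requiring care are reading off the Frobenius exponents correctly from the coset indices and using the divisibility hypotheses precisely where they are needed, namely to make $\mathbb{F}_{q^{r-s}}$ and $\mathbb{F}_{q^{r-t}}$ subfields and to force the exponent collapse $\lambda^{q^r}=\lambda^{q^s}=\lambda^{q^t}$. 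The conceptual content is simply that intersecting with these two subfields annihilates exactly the obstruction to distributivity.
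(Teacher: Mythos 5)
Your proof is correct, and it takes a genuinely different---and sharper---route than the paper's. The paper verifies the subfield axioms for $F_{r,s,t}(\alpha,\beta)$ directly: it checks that $0$ and $1$ belong to the set and that it is closed under addition, multiplication and inversion, each time combining the hypothesis $\lambda\in D(\alpha,\beta)$ (written as $\alpha(\lambda^{q^t}-\lambda^{q^r})=\beta(\lambda^{q^s}-\lambda^{q^t})$) with the Frobenius-collapse relations $\lambda^{q^r}=\lambda^{q^s}=\lambda^{q^t}$ that follow from membership in $\mathbb{F}_{q^{r-s}}\cap\mathbb{F}_{q^{r-t}}$. You instead observe that the collapse relations alone already force the distributivity identity, so that $\mathbb{F}_{q^{r-s}}\cap\mathbb{F}_{q^{r-t}}\subseteq D(\alpha,\beta)$ and hence $F_{r,s,t}(\alpha,\beta)=\mathbb{F}_{q^{r-s}}\cap\mathbb{F}_{q^{r-t}}=\mathbb{F}_{q^{\gcd(r-s,\,r-t)}}$, which is a subfield of $\mathbb{F}_{q^n}$ since $\gcd(r-s,r-t)$ divides $r-s$, which divides $n$. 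Your argument buys two things the paper's does not make explicit: an exact identification of $F_{r,s,t}(\alpha,\beta)$, including its order and the fact that it is independent of the particular $\alpha,\beta$ within the prescribed cosets, and the observation that intersecting with $D(\alpha,\beta)$ in the definition is actually redundant. The paper's element-by-element verification is longer but of a shape that would survive replacing the two Frobenius-stable subfields by other sets; for the statement as given, your argument is complete and strictly more informative.
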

\begin{proof} Let  $ (\alpha, \beta , \alpha + \beta )\in g^{[r]_q}H \times g^{[s]_q}H \times g^{[t]_q}H$ such that $0 < t< s< r \leq n.$ Let $\lambda \in D(\alpha, \beta)$, so we have 
$\alpha (\lambda^{q^t}-\lambda^{q^r})= \beta (\lambda^{q^s}-\lambda^{q^t}).$ Suppose $r-s$ divides $  n$ and $r-t $ divides $  n $. Then  $\mathbb{F}_{q^{r-s}} \cap  \mathbb{F}_{q^{r-t}}$ is a subfield of  $\mathbb{F}_{q^n}.$ 
Note that $ \lambda \in \mathbb{F}_{q^{r-s}} \cap \mathbb{F}_{q^{r-t}} \Leftrightarrow$  $\lambda^{q^{r-s}}= \lambda$ and $\lambda^{q^{r-t}}= \lambda.$  It follows that  $(\lambda^{q^{r-s}}-\lambda) ^ {q^s}=0 $ and  $(\lambda^{q^{r-t}}-\lambda) ^ {q^t}=0$. Thus   $\lambda^{q^r}= \lambda^{q^s} $ and  $ \lambda^{q^r}=\lambda^{q^t}$. Also $ F_{r,s,t}(\alpha, \beta) \neq \emptyset$ since $0,1 \in F_{r,s,t}(\alpha, \beta).$ Suppose $\lambda_1, \lambda_2 \in F_{r,s,t}(\alpha, \beta) .$ We have 
\begin{align*}
 \alpha \big ( (\lambda_1 + \lambda_2)^ {q^t} - (\lambda_1 + \lambda_2)^ {q^r}\big ) &= \alpha( \lambda_1^{q^t}+\lambda_2^{q^t} -\lambda_1^{q^r}-\lambda_2^{q^r}) \\
 &= \alpha (\lambda_1^{q^t}-\lambda_1^{q^r}) + \alpha (\lambda_2^{q^t}-\lambda_2^{q^r}) \\
 &= \beta (\lambda_1^{q^s}-\lambda_1^{q^t}) + \beta (\lambda_2^{q^s}-\lambda_2^{q^t}) \\
 &= \beta \big ( (\lambda_1 + \lambda_2)^ {q^s} - (\lambda_1 + \lambda_2)^ {q^t}\big ).
\end{align*}
It follows that $\lambda_1 + \lambda_2  \in D(\alpha, \beta).$ Also $\lambda_1 + \lambda_2 \in \mathbb{F}_{q^{r-s}} \cap  \mathbb{F}_{q^{r-t}}.$
Hence $ \lambda_1 + \lambda_2 \in F_{r,s,t}(\alpha, \beta).$ Furthermore,
\begin{align*}
\alpha \big ( (\lambda_1 \lambda_2)^ {q^t} - (\lambda_1 \lambda_2)^ {q^r}\big ) &= \alpha( \lambda_1^{q^t}\lambda_2^{q^t} -\lambda_1^{q^r}\lambda_2^{q^r}) \\
&= \alpha (\lambda_1^{q^r}\lambda_2^{q^t} -\lambda_1^{q^r}\lambda_2^{q^r}) \\
&=\alpha \lambda_1^{q^r} ( \lambda_2^{q^t}- \lambda_2^{q^r}) \\
&= \lambda_1^{q^r} \beta  ( \lambda_2^{q^s}- \lambda_2^{q^t}) \\
&= \beta ( \lambda_1^{q^r}\lambda_2^{q^s}- \lambda_1^{q^r} \lambda_2^{q^t}) \\
&= \beta ( \lambda_1^{q^s}\lambda_2^{q^s}- \lambda_1^{q^t} \lambda_2^{q^t}) \\
&= \beta \big ( (\lambda_1 \lambda_2)^ {q^s} - (\lambda_1 \lambda_2)^ {q^t}\big ). 
\end{align*}
It follows that $\lambda_1  \lambda_2  \in D(\alpha, \beta).$ Also $\lambda_1  \lambda_2 \in \mathbb{F}_{q^{r-s}} \cap  \mathbb{F}_{q^{r-t}}.$ Hence $ \lambda_1  \lambda_2 \in F_{r,s,t}(\alpha, \beta).$ We also have $\lambda_1 - \lambda_2 \in F_{r,s,t}(\alpha, \beta)$. So $ \big ( H_{r,s,t}(\alpha, \beta), +\big )$ is a subgroup of $(\mathbb{F}_{q^n},+).$ Suppose $\lambda \in \mathbb{F}_{q^{r-s}} \cap \mathbb{F}_{q^{r-t}} $. We know that if $\lambda \in D(\alpha,\beta) $ then  
\begin{align*}
\alpha ( \lambda ^{q^t}-\lambda ^{q^r} ) = \beta ( \lambda ^{q^s}-\lambda ^{q^t} ) \Leftrightarrow \alpha  ( \lambda ^ {q^r-q^t}-1)=- \lambda^{-q^t} \beta ( \lambda ^{q^s}-\lambda ^{q^t}).
\end{align*} Thus 
\begin{align*}
\alpha (\lambda ^ {-q^t}-\lambda ^ {-q^r})& =\alpha  \lambda ^ {-q^r} (\lambda ^ {q^r-q^t}-1) \\
&= \alpha \lambda^{-q^s} ( \lambda ^ {q^r-q^t}-1)\\
&=- \lambda^{-q^s} \lambda^{-q^t} \beta ( \lambda ^{q^s}-\lambda ^{q^t}) \\
&= \beta (\lambda ^ {-q^s}-\lambda ^ {-q^t}).
\end{align*} 
Thus $\lambda^{-1} \in D(\alpha,\beta). $ Hence $ \lambda_1 \lambda_2^{-1} \in F_{r,s,t}(\alpha, \beta).$ So $ \big ( F_{r,s,t}(\alpha, \beta), \cdot  \big )$ is a subgroup of $(\mathbb{F}_{q^n}^*,\cdot).$
\end{proof}

\begin{rem}
Only the case where $ 0 < t< s< r \leq n$ is  considered in the Theorem \ref{pr}. However the other cases can be deduced  in the similar way.
\end{rem}

\subsection{$D(\alpha,\beta)$ presented as a vector space where $\alpha, \beta \in DN_g(q,n)$} 
 
In this subsection it is shown that  $D(\alpha,\beta)$ has a vector space structure.  We have the following:
\begin{lem}
Let $(q,n)$ be a Dickson pair with $q=p^l$ for some prime $p$ and integers $l,n.$ Let $g$ be a generator of $\mathbb{F}_{q^n} ^ *$ and  $R$  the finite nearfield constructed with $H = \big < g^n \big >.$ Let  $ (\alpha, \beta )\in R^2.$ Then $D(\alpha,\beta)$ is an $\mathbb{F}$-vector space for some finite field $\mathbb{F}$.
\label{lprr}
\end{lem}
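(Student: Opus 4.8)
The plan is to show that $D(\alpha,\beta)$ is closed under addition and under scalar multiplication by a suitable subfield of $\mathbb{F}_{q^n}$, using the defining relation of $D(\alpha,\beta)$ expressed in terms of the Frobenius. First I would fix the $H$-cosets containing $\alpha$, $\beta$ and $\alpha+\beta$, say $\alpha \in g^{[r]_q}H$, $\beta \in g^{[s]_q}H$, and $\alpha+\beta \in g^{[t]_q}H$. Then for $\lambda \in R$ the condition $(\alpha+\beta)\circ\lambda = \alpha\circ\lambda + \beta\circ\lambda$ becomes, by the Dickson multiplication formula, the single $\mathbb{F}_{q^n}$-equation
\begin{align*}
(\alpha+\beta)\lambda^{q^t} = \alpha\lambda^{q^r} + \beta\lambda^{q^s},
\end{align*}
which I would rewrite as $\alpha(\lambda^{q^t}-\lambda^{q^r}) = \beta(\lambda^{q^s}-\lambda^{q^t})$. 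This is the workhorse identity already used in Theorems \ref{thmm} and \ref{pr}, and the whole argument rests on its additivity in $\lambda$.

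The key observation is that each of the three maps $\lambda \mapsto \lambda^{q^t}-\lambda^{q^r}$, $\lambda\mapsto \lambda^{q^s}-\lambda^{q^t}$ is additive over $\mathbb{F}_{q^n}$, because the Frobenius $\lambda \mapsto \lambda^{q^i}$ is additive. Hence if $\lambda_1,\lambda_2 \in D(\alpha,\beta)$ then adding the two defining equations gives the defining equation for $\lambda_1+\lambda_2$, so $D(\alpha,\beta)$ is an additive subgroup of $(\mathbb{F}_{q^n},+)$. This is exactly the additive part of the computation carried out in the proof of Theorem \ref{pr}, and it works regardless of how many of $\alpha,\beta,\alpha+\beta$ share an $H$-coset, since I am not intersecting with any auxiliary field here.

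Next I would identify the scalar field $\mathbb{F}$. The natural candidate is $D(R) = \mathbb{F}_q$ (Theorem \ref{thp}), or more generally the largest subfield of $\mathbb{F}_{q^n}$ fixed appropriately by the Frobenius powers occurring in the equation. For $c$ in such a field one has $c^{q^i}=c$, so scaling $\lambda$ by $c$ pulls $c$ outside each Frobenius term as a common factor; concretely $\alpha((c\lambda)^{q^t}-(c\lambda)^{q^r}) = c\,\alpha(\lambda^{q^t}-\lambda^{q^r})$ and similarly for the $\beta$-side, so the defining equation scales by $c$ and is preserved. Thus $D(\alpha,\beta)$ is closed under multiplication by $\mathbb{F}$, and together with additive closure this exhibits $D(\alpha,\beta)$ as an $\mathbb{F}$-vector space (associativity, distributivity and the remaining vector-space axioms are inherited from $\mathbb{F}_{q^n}$).

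The main obstacle is pinning down the correct field $\mathbb{F}$ so that the scalar-multiplication step is valid in every case, including when all of $\alpha,\beta,\alpha+\beta$ lie in distinct $H$-cosets and when $D(\alpha,\beta)$ is merely a subgroup rather than a subfield (recall the paper stresses $D(\alpha,\beta)$ need not be a subfield in general). The safe choice is to take $\mathbb{F}=\mathbb{F}_q=D(R)$, for which $c^{q^i}=c$ holds for all relevant $i$ since $q^i \equiv 1$ on $\mathbb{F}_q$; I would verify that this choice makes the pull-out computation go through uniformly and note that $\mathbb{F}_q \subseteq D(\alpha,\beta)$ by Theorem \ref{thp}, so the scalar action is well defined on a nonempty space. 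If one wishes to claim $\mathbb{F}$ as large as possible one must instead take the subfield fixed by all $\lambda\mapsto\lambda^{q^i}$ arising in the given equation, but for the stated existence claim $\mathbb{F}=\mathbb{F}_q$ suffices and avoids any case analysis.
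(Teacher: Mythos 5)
Your proof is correct, but it takes a genuinely different (and more uniform) route than the paper. The paper splits into three cases according to how many of $\alpha,\beta,\alpha+\beta$ share an $H$-coset: when all three coincide it quotes $D(\alpha,\beta)=\mathbb{F}_{q^n}$, when exactly two coincide it quotes the proof of Theorem \ref{thmm} to identify $D(\alpha,\beta)$ with a subfield $\mathbb{F}_{q^{\textsc{gcd}(t+n-s,n)}}$, and only in the all-distinct case does it run the Frobenius-additivity and scalar-pull-out computation, there taking the case-dependent scalar field $\mathbb{F}=\mathbb{F}_{q^t}\cap\mathbb{F}_{q^r}\cap\mathbb{F}_{q^s}$. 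You instead observe that in every case the defining condition is a single $q$-linearized equation $\alpha(\lambda^{q^t}-\lambda^{q^r})=\beta(\lambda^{q^s}-\lambda^{q^t})$, hence its solution set is automatically an $\mathbb{F}_q$-subspace; this eliminates the case analysis entirely and is exactly the observation the paper relegates to the remark following the lemma (\say{$D(\alpha,\beta)$ is also an $\mathbb{F}_q$-vector space}) and implicitly uses in the DSS algorithm, where $D(\alpha,\beta)=\ker\phi$ for an $\mathbb{F}_q$-linear map $\phi$. What the paper's longer argument buys is a larger scalar field in each case, i.e.\ sharper structural information about $D(\alpha,\beta)$; what yours buys is brevity, uniformity, and robustness (it does not depend on the classification in Theorem \ref{thmm}). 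One small point worth a sentence in a final write-up: when one of $\alpha$, $\beta$, $\alpha+\beta$ is zero the displayed equation should be replaced by its degenerate form (e.g.\ $\alpha\lambda^{q^r}+\beta\lambda^{q^s}=0$ when $\alpha+\beta=0$), but this is still $q$-linearized, so your argument goes through unchanged; the paper does not address this either.
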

\begin{proof}We have:
\begin{itemize}
 \item Suppose that  $\alpha, \beta, \alpha + \beta$ are all in the same $H$-coset. Then $D(\alpha,\beta)=\mathbb{F}_{q^n}. $ Thus $D(\alpha,\beta)$ is an $\mathbb{F}$-vector space where $\mathbb{F}=\mathbb{F}_{q^n}$.

 \item Suppose that exactly two of  $\alpha, \beta, \alpha + \beta$ belong to the same $H$-coset. We consider the case where   $ \alpha, \beta \in g^{[s]_q}H$ and $\alpha + \beta \in g^{[t]_q}H $ for $s \neq t$ (note that the other cases are similar).  By the proof of Theorem \ref{thmm},  we have $D(\alpha,\beta)=\mathbb{F}_{q^{\textsc{gcd}(t+n-s,n)}}$. Thus   $D(\alpha,\beta)$ is an $\mathbb{F}$-vector space where $\mathbb{F}=\mathbb{F}_{q^{\textsc{gcd}(t+n-s,n)}}$.

 \item  Suppose that $\alpha \in g^{[r]_q}H$ and $\beta \in g^{[s]_q}H$ and $\alpha +\beta \in g^{[t]_q}H$ where $r,s$ and $t$ are all distinct. We define
\begin{align*}
\mathbb{F}= \{ k \in \mathbb{F}_{q^n}: k^{{q^t}}=k^{{q^r}}=k^{q^s}=k \}= \mathbb{F}_{q^t} \cap \mathbb{F}_{q^r} \cap \mathbb{F}_{q^s}.
\end{align*}

 Let $\lambda_1, \lambda_2 \in D(\alpha, \beta)$. Using the Frobenius identity, we have $\lambda_1 + \lambda_2 \in D(\alpha, \beta). $  Let $ k \in\mathbb{F} $ and $\lambda \in D(\alpha, \beta).$ We have,
\begin{align*}
\alpha \big (  (k \lambda )^{q^t}+ (k \lambda )^{q^r} \big ) &= \alpha k ^{q^t} (\lambda ^{q^t}- \lambda ^{q^r} ) \\
&= \beta k ^{q^t} (\lambda ^{q^s}- \lambda ^{q^t} )\\
&= \beta  (k ^{q^t} \lambda ^{q^s}- k ^{q^t} \lambda ^{q^t} )\\
&=\beta  \big (  (k \lambda) ^ {q^s}  - (k \lambda) ^ {q^t} \big ).
\end{align*} It follows  that $k \lambda \in D(\alpha, \beta).$ Thus $D(\alpha,\beta)$ is an $\mathbb{F}$-vector space where $\mathbb{F}=\mathbb{F}_{q^t} \cap \mathbb{F}_{q^r} \cap \mathbb{F}_{q^s}$.
\end{itemize}

\end{proof}

\begin{rem}
It is not difficult  to see that  $D(\alpha, \beta)$ is also an $\mathbb{F}_q$-vector space. 
\end{rem}
\begin{lem}
Let $(q,n)$ be a Dickson pair with $q=p^l$ for some prime $p$ and integers $l,n$  such that $n >2.$  Let $g$ be a generator of $\mathbb{F}_{q^n} ^*$ and  $R$ the nearfield constructed with $H = \big < g^n \big >.$ Let  $ (\alpha, \beta ) \in R^2$.  Then there exists a positive integer $k$ such that $\vert D(\alpha, \beta) \vert =q^k.$ 
\end{lem}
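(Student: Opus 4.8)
The plan is to prove the statement by a direct case analysis on the $H$-cosets containing $\alpha$, $\beta$, and $\alpha+\beta$, invoking the vector-space structure established in Lemma \ref{lprr}. The key observation is that Lemma \ref{lprr} already shows that in every case $D(\alpha,\beta)$ is an $\mathbb{F}$-vector space for a finite field $\mathbb{F}$ whose order is a power of $q=p^l$; indeed in each of the three cases the relevant field $\mathbb{F}$ is $\mathbb{F}_{q^n}$, $\mathbb{F}_{q^{\textsc{gcd}(t+n-s,n)}}$, or $\mathbb{F}_{q^t}\cap\mathbb{F}_{q^r}\cap\mathbb{F}_{q^s}=\mathbb{F}_{q^{\textsc{gcd}(r,s,t)}}$, each of which is a finite extension of $\mathbb{F}_q$ and hence of $\mathbb{F}_p$.

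First I would recall from Lemma \ref{lprr} that $D(\alpha,\beta)$ is a finite-dimensional $\mathbb{F}$-vector space, and (by the Remark following it) also an $\mathbb{F}_q$-vector space. Since $\mathbb{F}_q$ is itself a finite field of order $q=p^l$ with $p$ prime, any finite-dimensional $\mathbb{F}_q$-vector space $V$ of dimension $d$ satisfies $\vert V\vert = q^d = p^{ld}$. Thus I would set $k:=\dim_{\mathbb{F}_q} D(\alpha,\beta)$, which is a nonnegative integer, and conclude $\vert D(\alpha,\beta)\vert = q^{k}$. To guarantee that $k$ is in fact a positive integer as the statement requires, I would note that $D(\alpha,\beta)$ always contains $D(R)\cong\mathbb{F}_q$ (by Theorem \ref{thp}, every distributive element distributes over every pair, so in particular $\mathbb{F}_q\subseteq D(\alpha,\beta)$); hence $D(\alpha,\beta)$ is nonzero and $k\geq 1$.

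The argument is therefore short once Lemma \ref{lprr} is in hand: the entire content is that a finite $\mathbb{F}_q$-vector space has cardinality a power of $q$, and that $D(\alpha,\beta)$ contains the full set of distributive elements so its dimension is at least one. I would present it cleanly by invoking Lemma \ref{lprr} for the vector-space structure and Theorem \ref{thp} for the lower bound, avoiding any re-derivation of the case analysis.

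The only real subtlety—and the step I would be most careful about—is the claim that $k$ is strictly positive rather than merely nonnegative; this is exactly where the containment $\mathbb{F}_q = D(R)\subseteq D(\alpha,\beta)$ is needed, and it is the one place where the proof must appeal to a nontrivial earlier result rather than to pure linear algebra. Everything else reduces to the standard fact that $\vert V\vert=\vert\mathbb{F}_q\vert^{\dim_{\mathbb{F}_q}V}$, so I expect no computational obstacle; the task is simply to cite the right structural lemma and record the counting consequence.
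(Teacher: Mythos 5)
Your proposal is correct and follows essentially the same route as the paper: the paper's proof simply notes that $D(\alpha,\beta)$ is a finite-dimensional $\mathbb{F}_q$-vector space (via Lemma \ref{lprr} and the remark after it) and counts $q^k$ elements for $k$ the dimension. Your additional observation that $k\geq 1$ because $D(R)\cong\mathbb{F}_q\subseteq D(\alpha,\beta)$ (or simply because $1\in D(\alpha,\beta)$) is a small point the paper leaves implicit, and it is a worthwhile touch.
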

\begin{proof}
$D(\alpha, \beta)$ is a finite dimensional vector space over $\mathbb{F}_q$. Then $D(\alpha, \beta)$ has a basis over $\mathbb{F}_q$ consisting of say $k$ elements. Thus  $D(\alpha, \beta)$ has exactly $q^k$ elements.
\end{proof}
Our next goal is to find a basis for  $D(\alpha, \beta)$.
\paragraph{Computational aspect \\ \\} 

 Let $\alpha, \beta, \alpha +\beta \in DN_g(q,n)$ such that $\alpha \in g^{[r]_q}H,$  $\beta \in g^{[s]_q}H$ and $\alpha +\beta \in g^{[t]_q}H$ where $r,s$ and $t$ all different. We have
  \begin{align*}
(\alpha + \beta) \circ \lambda = \alpha \circ \lambda + \beta \circ \lambda \Leftrightarrow \alpha (\lambda^{q^t}-\lambda^{q^r})= \beta (\lambda^{q^s}-\lambda^{q^t})
\end{align*} for all $\lambda \in \mathbb{F}_{q^n}. $ We resort to computational methods  in order to check the nature of the distributive elements $D(\alpha,\beta)$. We derive  an algorithm called "DSS" (Distributive Set  Subfields) which tests whether $D(\alpha, \beta)$ is a finite field. 
 
 We construct a function $\phi$ from $\mathbb{F}_{q^n}$ to itself that maps $\lambda$ to $  \alpha(\lambda^{q^t}-\lambda^{q^r}) - \beta(\lambda^{q^s}-\lambda^{q^t})$. This function will be zero if and only if $\lambda$ is a solution to the equation $ \alpha(\lambda^{q^t}-\lambda^{q^r}) = \beta(\lambda^{q^s}-\lambda^{q^t})$. Moreover, $\phi $ is an $\mathbb{F}_q$-linear map, so it suffices to compute it on a basis. A basis of $\mathbb{F}_{q^n}  $ is $\{1, g, g^2, \ldots, g^{n-1} \}.$  The function $\phi$ is entirely determined by the vectors $\phi(1), \phi(g), \ldots, \phi(g^{n-1}).$ We can represent each vector $\phi(g^j)$ for $0 \leq j \leq n-1$ as 
 \begin{align*}
 \phi(g^j)=m_0^j1+m_1^jg+ \ldots+ m_{(n-1)}^jg^{n-1} \thickspace \mbox{where} \thickspace m_i^j \in \mathbb{F}_q \thickspace \mbox{with} \thickspace 0 \leq i \leq n-1 .
 \end{align*}
 Thus $\phi$ is entirely determined by the values of the matrix of size $n \times n$ defined by 
 $M=(m_i^j)_{\substack{ 0 \leq i \leq n-1 \\ 0 \leq j \leq n-1}}.$  Let $\lambda \in \mathbb{F}_{q^n},$  there exist $\lambda_0, \ldots, \lambda_{n-1} \in \mathbb{F}_q$ such that $\lambda=\lambda_01+ \ldots+ \lambda_{n-1}g^{n-1}.$ So $\lambda$ can be represented by $\begin{bmatrix}
\lambda_0 \\
\vdots \\
\lambda_{n-1}
\end{bmatrix}$. Thus for all $\lambda \in \mathbb{F}_{q^n}$ we have $\phi (\lambda)=M \begin{bmatrix}
\lambda_0 \\
\vdots \\
\lambda_{n-1}
\end{bmatrix}$ where $\lambda=\lambda_01+ \ldots+ \lambda_{n-1}g^{n-1}$. We have
 \begin{align*}
 D(\alpha, \beta) &= ker \phi \\
 &= Nullspace(M) \\
 &=\left \{\lambda \in \mathbb{F}_{q^n}: M \begin{bmatrix}
\lambda_0 \\
\vdots \\
\lambda_{n-1}
\end{bmatrix} =0 \right \}.
 \end{align*}
 We give the details of the full algorithm in the appendix (see Algorithm \ref{algo}). After the implementation  of the DSS algorithm, we have the following examples.
\begin{exa}~
\begin{enumerate}
\item Suppose that  $(q,n)=(4,3)$. Let $g$ be such that $\mathbb{F}_{4^3}^* = \big <g \big >$  and $H=\big < g^3 \big >.$   We know that by Theorem \ref{thmm}, if at least two of  $\alpha, \beta, \alpha + \beta$ belong to the same $H$-coset then  $D(\alpha, \beta)$  is a subfield of $\mathbb{F}_{q^n}$. Furthermore for all $\alpha \in H, \beta \in gH$ and $ (\alpha + \beta) \in g^2H,$  $D(\alpha, \beta)$ is a finite field. 
\item Suppose that  $(q,n)=(5,4)$. Then $D(\alpha, \beta)$ is also a finite field for every $\alpha, \beta \in DN_g(5,4)$ where $\mathbb{F}_{5^4}^* = \big <g \big >$  and $H=\big < g^4\big >.$ 
\item Suppose that $(q,n)=(7,9)$. Let $g$ be such that $\mathbb{F}_{7^9}^* =\big < g \big >$ and $H =\big < g^9 \big >.$ We know that $D(\alpha, \beta)$ is an $\mathbb{F}_7$-vector space. Also $\mathbb{F}_{7^9}$ is an $\mathbb{F}_7$-vector space with a basis  $\{1, g^2,\ldots, g^8 \}$. We take  some  elements   
\begin{align*}
 & \alpha=  4g^8 + 5g^7 + 3g^4 + 6g^3 + 6g^2 + 6g + 4 \in  \mathbb{F}_{7^9}^*  \\
& \beta =  5g^8 + 3g^7 + g^6 + 3g^5 + 3g^4 + g^3 + 3g^2 + 6g + 6 \in \mathbb{F}_{7^9}^*
\end{align*}
such that $\alpha, \beta, \alpha + \beta$ are in distinct $H$-cosets. Then it turns out that a basis of $D(\alpha,\beta)$ is $\{1,  g+2g^2+6g^5+6g^6+5g^7+6g^8\}$  and has dimension $2$.  Then
\begin{align*}
D(\alpha, \beta) =  \left \{ \sum_{i=1}^2 v_i \alpha_i \thickspace \vert \thickspace \alpha_i \in \mathbb{F}_7 \right \}
\end{align*}  where $v_1=1$ and $v_2= g+2g^2+6g^5+6g^6+5g^7+6g^8$. Hence $\vert D(\alpha,\beta) \vert=7^2$ and  $2$ does not divide $9$. Thus $D(\alpha,\beta)$ is not a finite field.
\item We also consider the Dickson pair $(q,n)=(5,8)$. Let $g$ be such that $\mathbb{F}_{5^8}^* =\big < g \big >$ and $H =\big < g^8 \big >.$ We know that $D(\alpha, \beta)$ is an $\mathbb{F}_5$-vector space. Also $\mathbb{F}_{5^8}$ is an $\mathbb{F}_5$-vector space with a basis  $\{1, g^2,\ldots, g^{7} \}$. We take  a pair of elements in $\mathbb{F}_{5^{8}}^*$ 
\begin{align*}
& \alpha =3g^6+4g^4+3g^3+3g^2+2g+2 \\
& \beta=4g^7+g^6+g^5+3g^4+4g^2+3g+2
\end{align*}
 such that $\alpha, \beta, \alpha + \beta$ are in distinct $H$-cosets. Then
\begin{align*}
D(\alpha, \beta) = \left \{ \sum_{i=1}^2 v_i \alpha_i \thickspace \vert \thickspace \alpha_i \in \mathbb{F}_5 \right \}
\end{align*}
  where $v_1=1$ and $v_2= 2g^7+3g^6+g$ and  $D(\alpha,\beta)$ has  dimension $2$. Hence $\vert D(\alpha,\beta) \vert=5^2$. Note that $2$ divides $8$. But $D(\alpha,\beta)$ is not closed under  the finite field multiplication. To see this,  
\begin{align*}
v_2^2=v_2 \cdot v_2= 4g^7+g^6+2g^5+g^4+2g^3+3g^2+2g+4.
\end{align*}  
   In fact 
\begin{align*}
  v_2^2 \notin D(\alpha,\beta).
\end{align*}   
  Hence $D(\alpha,\beta)$ is not a finite field. 
\end{enumerate}
\label{example1}
\end{exa}

The Example \ref{example1} leads us to deduce that if    $\alpha, \beta, \alpha + \beta$  belong to distinct $H$-cosets  then $ D(\alpha, \beta)$ is not in general a  subfield of the finite field $\mathbb{F}_{q^n}$.

During $1971$ and $1972$, Dancs  showed in \cite{susans1971,susans1972} that the subnearfield structure of a finite nearfield is analogous to the subfield structure of finite fields.
\begin{thm}(\cite{susans1971, susans1972})
Let $R$ be a finite nearfield of order $q^{n},$ where $q=p^l$ for some prime $p$.
\begin{enumerate}
\item[(i)] If $K$ is a subnearfield of $R$, then $\vert K \vert =p^h$ with $h$ dividing $l \cdot n$.
\item[(ii)] Conversely, if $h$ divides $l \cdot n$, then $R$ has a unique subnearfield $K$ of order $p^h.$
\end{enumerate} 
\label{t}
\end{thm}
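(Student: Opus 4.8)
The plan is to treat the two directions separately after two reductions. First, any finite nearfield has prime characteristic $p$, so the abelian group $(R,+)$ is elementary abelian, i.e. an $\mathbb{F}_p$-vector space; since $|R|=q^n=p^{ln}$, it has dimension $ln$ over $\mathbb{F}_p$. Second, by Zassenhaus' classification $R$ is either a Dickson nearfield or one of the seven exceptional ones; each exceptional nearfield has order $p^2$ (so $ln=2$), where the only admissible $h$ are $1$ and $2$, realised uniquely by the prime field $K=\mathbb{F}_p$ (which any subnearfield must contain) and by $K=R$, so the statement is immediate there. I may therefore assume $R=DN_g(q,n)$ with underlying field $\mathbb{F}_{q^n}=\mathbb{F}_{p^{ln}}$.

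For part (i), let $K$ be a subnearfield. Then $(K,+)$ is a subgroup of the $\mathbb{F}_p$-space $(R,+)$, so $|K|=p^h$ for some integer $h$, and $(K^{*},\circ)$ is a subgroup of $(R^{*},\circ)$. By Lagrange's theorem $(p^h-1)\mid(p^{ln}-1)$, and the elementary equivalence $(p^a-1)\mid(p^b-1)\Leftrightarrow a\mid b$ then yields $h\mid ln$.

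For the existence half of part (ii), suppose $h\mid ln$ and take $K=\mathbb{F}_{p^h}$, the unique subfield of $\mathbb{F}_{p^{ln}}$ of that order. I claim this set is a subnearfield of $(R,+,\circ)$. It is an additive subgroup containing $0$ and $1$, and it is closed under $\circ$: for $\alpha,\beta\in K$ one has $\alpha\circ\beta=\alpha\cdot\beta^{q^k}$ for the appropriate $k$, and $\beta^{q^k}=\beta^{p^{lk}}\in K$ because every power of the Frobenius map stabilises each subfield of $\mathbb{F}_{p^{ln}}$, whence $\alpha\cdot\beta^{q^k}\in K$. Thus $(K^{*},\circ)$ is a finite submonoid of the group $(R^{*},\circ)$ and hence a subgroup, and the left distributive law is inherited from $R$; so $K$ is a subnearfield of order $p^h$.

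The uniqueness half of part (ii) is the main obstacle. Let $K'$ be an arbitrary subnearfield of order $p^h$; since $\mathbb{F}_{p^h}$ is already known to be one, it suffices to prove $K'^{*}=\mathbb{F}_{p^h}^{*}$ as subsets of $\mathbb{F}_{p^{ln}}^{*}$, i.e. that every $x\in K'^{*}$ has field-multiplicative order dividing $p^h-1$. The tools I would use are, on the multiplicative side, the closed form for iterated $\circ$-powers,
\[
x^{\circ m}=x^{(q^{mk}-1)/(q^k-1)}\qquad\text{for }x\in g^{[k]_q}H,
\]
obtained by induction from $x\circ y=x\cdot\varphi^k(y)$ together with the fact that $\varphi^k$ is a field automorphism; and, on the additive side, that for each $x\in K'$ the left multiplication $L_x\colon y\mapsto x\circ y=x\cdot y^{q^{k}}$ is an $\mathbb{F}_p$-linear bijection of the $h$-dimensional space $K'$, so that $\varphi^{k}(K')=x^{-1}K'$ as additive subspaces. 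Combining the constraint $x^{\circ(p^h-1)}=1$ with these subspace identities should force $x^{p^h-1}=1$ in the field, giving $K'^{*}\subseteq\mathbb{F}_{p^h}^{*}$ and hence equality by cardinality. The delicate point is exactly this coupling: at the level of abstract groups $(R^{*},\circ)$ is only metacyclic (an extension of $\mathbb{Z}/n$ by the cyclic group $H$), so it generally possesses several subgroups of order $p^h-1$. What singles out $\mathbb{F}_{p^h}^{*}$ is that a subnearfield's multiplicative group is simultaneously an $\mathbb{F}_p$-subspace stable under the twisted maps $L_x$, and making the interaction of the additive and multiplicative structures precise is where the real work lies.
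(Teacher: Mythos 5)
This statement is quoted in the paper from Dancs' two papers and is given no proof there, so there is nothing internal to compare your attempt against; it has to stand on its own. Much of it does: the reduction to prime characteristic, part (i) via Lagrange applied to $(K^{*},\circ)\leq(R^{*},\circ)$ together with the equivalence $(p^{a}-1)\mid(p^{b}-1)\Leftrightarrow a\mid b$, and the existence half of part (ii) are all correct. In particular your observation that \emph{every} subfield of $\mathbb{F}_{p^{ln}}$ is automatically closed under the Dickson multiplication --- because the twist $\beta\mapsto\beta^{q^{k}}$ is a power of the Frobenius map and therefore stabilises every subfield --- is exactly the right mechanism, and it is consistent with the paper's remark that such a subnearfield need not be a sub\emph{field} of the nearfield in the distributive sense unless $h$ divides $l$.

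The genuine gap is the uniqueness half of part (ii), which you yourself flag with ``should force'' and ``where the real work lies.'' The two tools you list do not visibly combine to close it. The power formula $x^{\circ m}=x^{(q^{mk}-1)/(q^{k}-1)}$ together with $x^{\circ(p^{h}-1)}=1$ only tells you that the field-multiplicative order of $x$ divides $(q^{(p^{h}-1)k}-1)/(q^{k}-1)$, which is in general far larger than $p^{h}-1$ and does not by itself pin the order down; and the identity $(K')^{q^{k}}=x^{-1}K'$ is a constraint on the subspace $K'$ as a whole, not yet on the order of any individual element. Since $(R^{*},\circ)$ is metacyclic rather than cyclic, it genuinely can contain several subgroups of order $p^{h}-1$, so uniqueness cannot be extracted from the multiplicative structure alone; the additive--multiplicative interaction has to be carried out explicitly, and that is precisely the content of Dancs' theorem. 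Until that step is supplied, part (ii) is not proved.
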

Let $R \in DN(q,n)$ where $n>2$. Let $(\alpha, \beta) \in R^2$. Looking   at $D(\alpha, \beta)$  where $\alpha, \beta, \alpha + \beta$ are all in distinct $H$-cosets, assume that  $ (\alpha, \beta , \alpha + \beta )\in g^{[r]_q}H \times g^{[s]_q}H \times g^{[t]_q}H$ such that $r,s,t$ are all distinct.
Then
\begin{align*}
D(\alpha, \beta)= \left \{ \lambda \in R \thickspace : \thickspace \alpha (\lambda^{q^t}-\lambda^{q^r})= \beta (\lambda^{q^s}-\lambda^{q^t}) \right \}.
\end{align*}
 By Example \ref{example1} and appealing to Theorem \ref{t}, if we take the Dickson pair $(7,9)$ we see that there exist some pairs $(\alpha,\beta)$ such that $D(\alpha,\beta)$ is not a subnearfield of $DN_g(7,9).$

 Furthermore by proof of Theorem \ref{thmm} the following is an immediate consequence:

\begin{cor}
Let $(q,n)$ be a Dickson pair with $q=p^l$ for some prime $p$ and positive integers $l,n$ such that $n >2.$ Let $g$ be a generator of $\mathbb{F}_{q^n}^*$ and let $R$ be the nearfield constructed with $H = \big < g^n \big >.$ Let $(\alpha, \beta) \in R^2$. We have the following:

\begin{enumerate}
\item[(i)] If all $\alpha, \beta, \alpha +\beta$ belong to the same $H$-coset then $D(\alpha,\beta)=R$ and so is also a subnearfield of $R.$ 
\item[(ii)] If two of $\alpha, \beta , \alpha +\beta $ belong to $g^{[s]_q}H$  and the  third to $g^{[t]_q}H$ such that $\textsc{gcd}(t+n-s,n)=1$ where  $s \neq t$ then by Theorem \ref{tp} $D(\alpha, \beta)$ is  a subnearfield of $R.$ Furthermore $C \big (D(\alpha, \beta ) \big )=C \big (D(R) \big )= D(R) \cap GC(R).$
\end{enumerate}
\end{cor}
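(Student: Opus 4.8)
The plan is to read off both claims directly from the cardinality computation already carried out in the proof of Theorem \ref{thmm}, combined with the structural result of Theorem \ref{tp}. No new machinery is needed; the whole corollary is a specialization of earlier statements.

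For part (i), I would simply invoke Lemma \ref{la}. If $\alpha,\beta,\alpha+\beta$ lie in a common $H$-coset, then $(\alpha+\beta)\circ\lambda=\alpha\circ\lambda+\beta\circ\lambda$ holds for \emph{every} $\lambda\in R$, so by definition $D(\alpha,\beta)=R$. Since $R$ is trivially a subnearfield of itself, this case is finished.

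For part (ii), the first step is to record the containment $D(R)\subseteq D(\alpha,\beta)$, which is immediate from the definition of $D(R)$ (a distributive element distributes over every pair, in particular over $(\alpha,\beta)$), and was already noted in the excerpt. The second step is to reuse the counting from the proof of Theorem \ref{thmm}: when exactly two of $\alpha,\beta,\alpha+\beta$ lie in $g^{[s]_q}H$ and the third in $g^{[t]_q}H$ with $s\neq t$, the solution set of $(\alpha+\beta)\circ\lambda=\alpha\circ\lambda+\beta\circ\lambda$ is the finite field $\mathbb{F}_{q^{\textsc{gcd}(t+n-s,n)}}$, so that $\vert D(\alpha,\beta)\vert=q^{\textsc{gcd}(t+n-s,n)}$. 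Imposing $\textsc{gcd}(t+n-s,n)=1$ forces $\vert D(\alpha,\beta)\vert=q$. By Theorem \ref{thp} we also have $\vert D(R)\vert=q$, so the containment $D(R)\subseteq D(\alpha,\beta)$ together with equality of finite cardinalities yields $D(\alpha,\beta)=D(R)$. Theorem \ref{tp} then gives that $D(\alpha,\beta)=D(R)$ is a subnearfield of $R$.

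It remains to justify the center identity $C\big(D(\alpha,\beta)\big)=C\big(D(R)\big)=D(R)\cap GC(R)$. The first equality is immediate from $D(\alpha,\beta)=D(R)$. For the second, I would unwind the definitions: $D(R)\cap GC(R)$ consists precisely of those elements of $D(R)$ that commute under $\circ$ with every element of $D(R)$, which is exactly the multiplicative center $C\big(D(R)\big)$; and since Theorem \ref{thp} identifies $D(R)$ with the commutative field $\mathbb{F}_q$, this set equals $D(R)$ itself, hence is a subnearfield (with $GC(R)$ a subnearfield by Lemma \ref{lem41}). I expect the only point requiring care to be the index bookkeeping of $\textsc{gcd}(t+n-s,n)$ and the remark that the roles of $\alpha,\beta,\alpha+\beta$ can be permuted so that the remaining two-coset configurations are handled symmetrically, exactly as in the proof of Theorem \ref{thmm}; everything else reduces to the cardinality argument $D(R)\subseteq D(\alpha,\beta)$ with $\vert D(R)\vert=\vert D(\alpha,\beta)\vert=q$ and to matching definitions.
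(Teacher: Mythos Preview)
Your proposal is correct and follows essentially the same route as the paper, which states the corollary as an immediate consequence of the proof of Theorem \ref{thmm} without giving a separate argument. Your explicit cardinality step $D(R)\subseteq D(\alpha,\beta)$ together with $\vert D(\alpha,\beta)\vert=q^{\textsc{gcd}(t+n-s,n)}=q=\vert D(R)\vert$ is exactly the intended specialization, and the center identity $C\big(D(R)\big)=D(R)\cap GC(R)$ is the same definitional unwinding used earlier in the paper (in the $n=2$ lemma).
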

\begin{rem}
Let $(q=p^l,n)$ be a Dickson pair and  $R = DN_g(q,n)$ where $g$ is  a generator of $\mathbb{F}_{q^n} ^ *$ and  $R$  the finite nearfield constructed with $H = \big < g^n \big >.$  Every $\mathbb{F}_{p^k}$ with $k$ dividing $l$ is a subnearfield of $DN_g(q,n)$. To see this, by Theorem \ref{tp} $D(R)$ is a subnearfield of $R$. Also by Theorem \ref{thp} $D(R)$ is a finite field, so isomorphic to $\mathbb{F}_{p^l}$. It is known that $\mathbb{F}_{p^k}$ can be embedded into $\mathbb{F}_{p^l}$ if and only if $k$ divides $l$. So every $\mathbb{F}_{p^k}$ with $k$ dividing $l$ can be embedded into $DN_g(q,n).$ Note that they have different multiplication, but $\mathbb{F}_{p^k}$ is isomorphic to a subfield of $\mathbb{F}_{p^l}$ which is a subnearfield of $DN_g(q,n).$ Hence  $\mathbb{F}_{p^k}$ is a  subnearfield of $DN_g(q,n).$ Observe that $k$ must divide $l$, dividing $l \cdot n$ is not enough. \\

 Furthermore, if two of $\alpha, \beta , \alpha +\beta $ belong to $g^{[s]_q}H$  and the  third to $g^{[t]_q}H$ such that $\textsc{gcd}(t+n-s,n)=1$ where  $s \neq t$ then by proof of Theorem \ref{thmm} $D(\alpha,\beta)=\mathbb{F}_{p^{l\cdot \textsc{gcd}(t+n-s,n)}}$. Suppose $\textsc{gcd}(t+n-s,n) \neq 1$ then $l \cdot \textsc{gcd}(t+n-s,n)$ can not divide $l$. Hence $D(\alpha, \beta)$ can not be a subnearfield of $D(R)=\mathbb{F}_{p^l}$.
\end{rem}

\section{Further results on  $R$-subgroups } 

 Let $R$ be a finite nearfield. All the results in this section make use of  Theorem \ref{th2} where at each \say{\textsl{distributivity trick}}, a triple $ (\alpha ,\beta ,\lambda) \in R^3$ such that $\lambda \notin D(\alpha, \beta)$ is chosen within an application of the eGe algorithm.
\subsection{Some  properties}
In this subsection, we study the  notion of $R$-dimension and $R$-basis of  $R$-subgroups.

\begin{defn} A finite set $V=\{v_1,\ldots, v_k\}$ of non-zero vectors in $R^n$   is $R$-linearly dependent if there exists $v_i\in V$ such that $v_i\in gen(v_1,\ldots,\hat{v_i},\ldots,v_k)$.
\end{defn}
Note that we use $\{ v_1,\ldots,\hat{v_i},\ldots,v_k \}$ to denote the fact that the vector $v_i$ has been removed from the set of vectors $\{v_1, \ldots, v_k \}.$

\begin{lem}Let $R$ be a finite nearfield and $v_1,\ldots,v_k\in R^m$. Then
$  \big \vert gen(v_1,\ldots,v_k) \big \vert = \vert R \vert ^{k'} $ where $k'$ is the number of non-zero rows obtained after performing the eGe algorithm on the vectors $v_1,\ldots,v_k$. 
\end{lem}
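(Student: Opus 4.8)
The plan is to read the result off directly from Theorem \ref{th2}. That theorem guarantees that after running the eGe algorithm on $v_1,\ldots,v_k$ one obtains nonzero rows $u_1,\ldots,u_{k'}$ with
\[
gen(v_1,\ldots,v_k)=\bigoplus_{i=1}^{k'}u_iR,
\]
so it suffices to (a) compute the size of each summand $u_iR$ and (b) convert the directness of the sum into a product of cardinalities.

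For step (a), I would show that for each $i$ the map $\psi_i\colon R\to u_iR$, $r\mapsto u_ir$, is a bijection, whence $\vert u_iR\vert=\vert R\vert$. It is a homomorphism $(R,+)\to(u_iR,+)$ because the nearring-module axiom gives $u_i(r+r')=u_ir+u_ir'$, and it is surjective by the definition of $u_iR$. Injectivity is the only point that uses the nearfield hypothesis: since $u_i\neq0$ it has some nonzero coordinate $u_i^j$, and comparing $j$-th coordinates in $u_ir=u_ir'$ yields $u_i^j\circ r=u_i^j\circ r'$; as $u_i^j\in R^*$ and $(R^*,\circ)$ is a group (with $a\circ0=0$ for every $a$, by left distributivity), left-cancellation by $(u_i^j)^{-1}$ forces $r=r'$.

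For step (b), the internal direct sum $gen(v_1,\ldots,v_k)=\bigoplus_{i=1}^{k'}u_iR$ means every element admits a unique representation $\sum_{i=1}^{k'}w_i$ with $w_i\in u_iR$, so $(w_1,\ldots,w_{k'})\mapsto\sum_i w_i$ is a bijection from $u_1R\times\cdots\times u_{k'}R$ onto $gen(v_1,\ldots,v_k)$. Combining with step (a),
\[
\big\vert gen(v_1,\ldots,v_k)\big\vert=\prod_{i=1}^{k'}\vert u_iR\vert=\vert R\vert^{\,k'}.
\]

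The argument is essentially bookkeeping layered on top of Theorem \ref{th2}; the only genuine content is the injectivity of $\psi_i$, which is exactly where the fact that $R$ is a nearfield (rather than an arbitrary nearring) enters. I would take care to record that $u_i\neq0$ for all $i\le k'$ — this is what the phrase \emph{non-zero rows} in the statement encodes — since otherwise the corresponding factor would collapse and the count would fail.
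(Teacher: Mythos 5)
Your proposal is correct and follows the same route as the paper: invoke Theorem \ref{th2} to write $gen(v_1,\ldots,v_k)=\bigoplus_{i=1}^{k'}u_iR$ and then take the product of the cardinalities of the summands. The only difference is that you spell out why $\vert u_iR\vert=\vert R\vert$ (injectivity of $r\mapsto u_ir$ via cancellation in $(R^*,\circ)$), a step the paper leaves implicit.
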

\begin{proof}
By Theorem \ref{th2}, we have  $gen(v_1,\ldots,v_k)= \bigoplus_{i=1}^{k'}u_iR $ where $U=\big(u_{i}^j  \big) \in R^{k' \times m}$ is the final matrix after the expanded Gaussian  elimination with the property that all its columns have at most one non-zero entry. Hence 
 $\big \vert gen(v_1,\ldots,v_k)  \big \vert  =  \big \vert \bigoplus_{i=1}^{k'}u_iR  \big \vert = \vert u_1R \vert \times \vert u_2R \vert \times  \cdots \times  \vert u_{k'}R \vert = \vert R \vert^ {k'}.$
\end{proof}
In particular, we obtain:
\begin{cor}
Let $R$ be a finite nearfield and $T$  an $R$-subgroup of $R^m$. Then $ \vert T \vert = \vert R \vert ^k $ for some $k \leq m$.
\end{cor}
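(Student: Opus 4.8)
The plan is to prove the corollary as a direct consequence of the lemma immediately preceding it, which states that $\big\vert gen(v_1,\ldots,v_k) \big\vert = \vert R \vert^{k'}$ where $k'$ is the number of non-zero rows produced by the eGe algorithm. The key observation is that every $R$-subgroup of $R^m$ is in fact generated by finitely many vectors, so the lemma applies directly once this is established.

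First I would argue that any $R$-subgroup $T$ of $R^m$ is finitely generated. Since $R$ is a finite nearfield and $m$ is a positive integer, the ambient module $R^m$ is itself a finite set with $\vert R\vert^m$ elements, so any subset (in particular $T$) is finite. Choosing $T = \{t_1,\ldots,t_k\}$ to be a complete (finite) list of its elements, the smallest $R$-subgroup containing these vectors is contained in $T$ by definition, while it clearly contains each $t_i$; hence $gen(t_1,\ldots,t_k) = T$. Next I would invoke Theorem \ref{th2} together with the preceding lemma: applying the eGe algorithm to $t_1,\ldots,t_k$ produces a matrix $U \in R^{k'\times m}$ each of whose columns has at most one non-zero entry, and the lemma gives $\vert T\vert = \vert gen(t_1,\ldots,t_k)\vert = \vert R\vert^{k'}$. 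Setting $k := k'$ then yields $\vert T\vert = \vert R\vert^k$.

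It remains to verify the bound $k \leq m$. This follows from the structural conclusion of Theorem \ref{th2}: the final matrix $U$ has $k'$ non-zero rows, and since each of its $m$ columns contains at most one non-zero entry while each of the $k'$ non-zero rows must contain at least one non-zero entry (each $u_i$ is a non-zero vector contributing a direct summand $u_iR$), an injection from the set of non-zero rows into the set of columns can be defined by sending each row to the position of one of its non-zero entries. Distinctness of these column positions across rows is forced by the "at most one non-zero entry per column" property, giving $k' \leq m$.

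The main obstacle I anticipate is the last step, namely making the inequality $k' \leq m$ rigorous rather than merely plausible. The cleanest route is to use the directness of the sum $\bigoplus_{i=1}^{k'} u_iR$ from Theorem \ref{th2}: each non-zero row $u_i$ has a pivot entry, these pivots lie in distinct columns (this is exactly the reduced row-echelon structure preserved through the eGe algorithm), and there are only $m$ columns available, so $k' \leq m$. Everything else is bookkeeping that follows formally from results already proved, so the proof should be short.
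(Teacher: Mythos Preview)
Your proposal is correct and follows the paper's intended approach: the paper presents the corollary immediately after the lemma with only ``In particular, we obtain'' and no separate proof, so your argument simply spells out the details the paper leaves implicit. The two points you supply---that finiteness of $R$ forces $T$ to be finitely generated (indeed $T=gen(T)$), and that the column condition on the eGe output matrix forces $k'\leq m$ via a pigeonhole/injection argument---are exactly the missing steps, and both are handled correctly.
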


In analogy to the notion of a basis of a subspace in the theory of vector spaces, we introduce what we will call $R$-basis and $R$-dimension of an $R$-subgroup of the finite dimensional Beidleman near-vector spaces $R^m$. 
\begin{defn}Let $R$ be a finite nearfield
and  $T$  an $R$-subgroup of $R^m$. There exist some vectors $v_1,\ldots,v_k\in R^m$ such that $gen(v_1,\ldots,v_k)=T$. By the eGe algorithm, the finite set of non-zero row vectors  $ \lbrace u_1, \ldots, u_{k'} \rbrace$ obtained will be called an $R$-basis of $T$ and the number $k'$ will be  called the $R$-dimension of $T$.
\label{df}
\end{defn}
\begin{rem}Let $R$ be a finite nearfield.
Suppose there exist $v_1,\ldots,v_k \in R^m$ and $w_1,\ldots,w_l \in R^m$ such that $gen(v_1,\ldots,v_k )=gen(w_1,\ldots,w_l)=T$. By Theorem \ref{th2} we have $gen(v_1,\ldots,v_k)=\bigoplus_{i=1}^{k'}\mu_iR$ and $gen(w_1,\ldots,w_l)=\bigoplus_{i=1}^{l'}\nu_iR$ such that $\big ( \mu_i^j \big )_ {\substack{ 1 \leq i \leq k' \\ 1 \leq j \leq m}}$  and $\big ( \nu_i^j \big )_ {\substack{ 1 \leq i \leq l' \\ 1 \leq j \leq m}}$ are some matrices that have at most one non-zero entry in each column. We have, $\vert gen(v_1,\ldots,v_k) \vert = \vert gen(w_1,\ldots,w_l) \vert.$ Then $ \vert R \vert^{k'}=\vert R \vert^{l'}.$ Thus $k'=l'$. Thus  the $R$-dimension of $T$ is well-defined.
\end{rem}
\begin{defn}Let $R$ be a finite nearfield
and  $T$  an $R$-subgroup of $R^m$. $V$ generates $T$ if $V\subseteq T$ and $gen(V)=T$.
\end{defn}
\begin{defn}Let $R$ be a finite nearfield
and  $T$  a $R$-subgroup of $R^m$.
 $V$ is a seed set for $T$ if $V$ is $R$-linearly independent and $V$ generates $T$.
\end{defn}
\begin{defn}Let $R$ be a finite nearfield
and  $T$  an $R$-subgroup of $R^m$.
The seed number of $T$ is the minimal cardinality  of all the seed sets, i.e., $s(T)= \min _{V\in \mathcal{B}} |V|$ where $\mathcal{B}$ is the set of seed sets for $T$. 
\end{defn}

\begin{rem}Let $R$ be a finite nearfield
and  $T$  an $R$-subgroup of $R^m$. Then 
 the seed number $s(T)$ is well-defined. Note that for every $R$-subgroup $T$, $gen(T)=T$ (i.e., $T$ is generated by all its elements), then $s(T) \leq |T|$. Thus $s(T)$ is well-defined. 
\end{rem}
\begin{lem}Let $R$ be a finite nearfield
and  $T$  an $R$-subgroup of $R^m$. Then
\begin{align*}
R\mbox{-}\dim (T)= \max _{V\in \mathcal{B}} |V|
\end{align*}
 where $\mathcal{B}$ is the set of seed sets for $T$. 
\end{lem}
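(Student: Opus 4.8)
The plan is to prove the two inequalities $\max_{V\in\mathcal{B}}|V|\le R\mbox{-}\dim(T)$ and $\max_{V\in\mathcal{B}}|V|\ge R\mbox{-}\dim(T)$ separately. The common tool is the fact, established in the preceding lemma and corollary, that every $R$-subgroup has cardinality a power of $|R|$ and that $R\mbox{-}\dim(T)$ (as fixed in Definition \ref{df} and shown well-defined there) is exactly the exponent $d$ with $|T|=|R|^{d}$. Since $R$ is a finite nearfield we have $|R|\ge 2$, so the map $a\mapsto|R|^{a}$ is strictly increasing; this lets me translate every dimension comparison into a comparison of cardinalities. I would also record the monotonicity of $gen$: enlarging the list of generating vectors can only enlarge the $R$-subgroup, because $gen$ is by definition the smallest $R$-subgroup containing the listed vectors.

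For the upper bound, let $V=\{v_1,\ldots,v_k\}$ be an arbitrary seed set and consider the chain of $R$-subgroups $\{0\}=T_0\subseteq T_1\subseteq\cdots\subseteq T_k=T$ with $T_i=gen(v_1,\ldots,v_i)$, the inclusions coming from monotonicity. The crucial step is to show each inclusion is strict. The first one, $T_0\subsetneq T_1=gen(v_1)$, holds because $v_1\neq 0$ (seed sets consist of non-zero vectors). For $2\le i\le k$, if $T_{i-1}=T_i$ then $v_i\in gen(v_1,\ldots,v_i)=gen(v_1,\ldots,v_{i-1})\subseteq gen(v_1,\ldots,\hat{v_i},\ldots,v_k)$, contradicting the $R$-linear independence of $V$. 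Since each $|T_i|$ is a power of $|R|$ and every strict inclusion forces its exponent to increase by at least one, starting from exponent $0$ at $T_0$ gives exponent $\ge i$ at $T_i$; taking $i=k$ yields $k\le R\mbox{-}\dim(T)$, hence $\max_{V\in\mathcal{B}}|V|\le R\mbox{-}\dim(T)$.

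For the lower bound I would exhibit a seed set realizing the dimension, namely the $R$-basis $\{u_1,\ldots,u_{k'}\}$ output by the eGe algorithm, where $k'=R\mbox{-}\dim(T)$. It generates $T$ and each $u_i\in T$, so it satisfies the generation requirement. To see it is $R$-linearly independent, note that deleting any $u_i$ leaves a matrix whose columns still have at most one non-zero entry, so by Theorem \ref{th2} the remaining rows generate a direct sum of cardinality $|R|^{k'-1}$; if $u_i$ lay in the $gen$ of the remaining rows, then $T$ would have cardinality $|R|^{k'-1}<|R|^{k'}$, a contradiction. Thus this $R$-basis is a seed set of cardinality $k'=R\mbox{-}\dim(T)$, giving $\max_{V\in\mathcal{B}}|V|\ge R\mbox{-}\dim(T)$. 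Combining the two inequalities gives the asserted equality.

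I expect the main obstacle to be the strictness of the chain in the upper bound: $R$-linear dependence is phrased as a single global condition (some $v_i$ lies in the $gen$ of the others), and the real work is converting it into the local statement that consecutive $T_{i-1}$ and $T_i$ differ. Once strictness is in hand, the monotone power-of-$|R|$ cardinality count finishes the argument essentially for free, and the lower bound is a direct cardinality check on the eGe output.
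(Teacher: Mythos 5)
Your proof is correct, and it is both more complete and differently structured than the paper's own argument. The paper's proof handles only one inequality: it takes an arbitrary seed set $V$ of size $k$, runs the eGe algorithm to obtain $k'$ rows, and simply asserts \say{since $V$ is $R$-linearly independent, $k' \geq k$} without further justification; the attainment of the maximum (i.e., that some seed set actually has size $R\mbox{-}\dim(T)$) is not addressed at all. You supply both halves. For the upper bound you replace the paper's bare assertion with a genuine argument: the chain $T_0\subsetneq T_1\subsetneq\cdots\subsetneq T_k=T$, with strictness extracted from $R$-linear independence and the count finished by the fact that every $R$-subgroup has cardinality $\vert R\vert^{d}$ with $R\mbox{-}\dim$ equal to the exponent $d$. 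This is the step the paper leaves implicit, and your conversion of the global independence condition into strictness of consecutive inclusions is exactly the missing content. For the lower bound you verify that the eGe output $\{u_1,\ldots,u_{k'}\}$ is itself a seed set, via the cardinality contradiction $\vert R\vert^{k'-1}<\vert R\vert^{k'}$; this is routine but necessary for the stated equality (a maximum, not merely an upper bound on seed-set sizes). In short, your route buys rigor and completeness at the cost of a slightly longer argument; the paper's version is essentially a one-line sketch of your first inequality.
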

\begin{proof} Let  $T$  be an $R$-subgroup of $R^m$.
Suppose $V$ is a seed set for $T$ containing $k$ vectors. Then $gen(V)=T$. By the eGe algorithm we have $gen(V)=\bigoplus_{i=1}^{k'}u_iR $ where $\big ( u_i^j \big )_{\substack{ 1 \leq i \leq k' \\ 1 \leq j \leq m}}$  is a  matrix that has at most one non-zero entry in each column. Since $V$ is $R$-linearly independent,  $k' \geq k.$ Thus $R$-$\dim(T) \geq \vert V \vert.$ 
\end{proof}
\begin{exa}~Let us consider the finite dimensional Beidleman near-vector space $(R^m,R)$ where $R$ is a finite nearfield. Suppose $m=1.$ Then $s(R)=1.$
Suppose $n=2.$ Since $gen(v)=vR$ for all $v \in R^2,$ we have $s(R^2) \neq 1$. Thus  $s(R^2)=2$. Suppose $m=3$. Let $v_1=(1,1,0)$ and $v_2=(1,0,1)$  in $R^3$ such that $gen(v_1,v_2)=gen(e_1,e_2,e_3)=R^3$
 where $e_1=(1,0,0), e_2=(0,1,0)$ and $e_3=(0,0,1)$. So $\{ e_1,e_2, e_3 \}$ and $\{ v_1,v_2 \}$ are some seed sets for $R^3$. A seed set of maximum size is $\{e_1,e_2,e_3 \}$ and a seed set of minimum size is $\{v_1, v_2 \}$. Thus  $s(R^3)= 2  $ and $ R$-$\dim(R^3)=3.$
\end{exa}

\begin{lem}
Let $R$ be a finite nearfield and $V$  a finite set of vectors in $R^m.$ If $gen(V)=T$, then $V$ contains a seed set for $T$.
\end{lem}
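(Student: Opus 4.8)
The plan is to prove that any finite generating set $V$ for the $R$-subgroup $T$ contains a subset that is a seed set, that is, a subset which is $R$-linearly independent and still generates $T$. The natural approach is a minimality argument: among all subsets of $V$ that generate $T$, pick one of minimum cardinality and show that this forced minimality rules out $R$-linear dependence.

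First I would invoke the hypothesis $gen(V) = T$ to observe that the collection $\mathcal{S} = \{ W \subseteq V : gen(W) = T \}$ is nonempty, since $V \in \mathcal{S}$. Because $V$ is finite, $\mathcal{S}$ is a finite nonempty collection of finite sets, so it contains a member $W_0$ of minimum cardinality. I would then claim that $W_0$ is $R$-linearly independent, and hence is a seed set for $T$ contained in $V$.

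To establish the claim, I would argue by contradiction using the definition of $R$-linear dependence. Suppose $W_0 = \{v_1, \ldots, v_k\}$ is $R$-linearly dependent. Then by definition there exists some $v_i \in W_0$ with $v_i \in gen(v_1, \ldots, \hat{v_i}, \ldots, v_k)$. The key step is to show that removing $v_i$ does not change the generated $R$-subgroup, i.e. that $gen(W_0 \setminus \{v_i\}) = gen(W_0) = T$. This is precisely where the fourth bullet of Lemma \ref{l1} applies in reverse: since $v_i \in gen(v_1, \ldots, \hat{v_i}, \ldots, v_k)$, that lemma gives
\begin{equation*}
gen(v_1, \ldots, \hat{v_i}, \ldots, v_k) = gen(v_i, v_1, \ldots, \hat{v_i}, \ldots, v_k),
\end{equation*}
and the right-hand side equals $gen(W_0)$ up to a permutation of the arguments, which does not affect the generated set by the first bullet of Lemma \ref{l1}. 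Hence $W_0 \setminus \{v_i\}$ is a proper subset of $W_0$ that still generates $T$ and lies in $V$, contradicting the minimality of $W_0$ in $\mathcal{S}$.

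The only point requiring care is the bookkeeping that removing $v_i$ yields a subset whose $gen$ is still $T$; this reduces cleanly to the two relevant bullets of Lemma \ref{l1} (permutation invariance and the absorption of a vector already in the span). I expect no serious obstacle here, since the substantive work has already been done in Lemma \ref{l1}; the proof is essentially a standard extraction-of-a-basis-from-a-spanning-set argument transported to the $R$-subgroup setting, with minimality of cardinality playing the role that a maximal independent subset plays in the classical proof.
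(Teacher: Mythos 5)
Your proof is correct and takes essentially the same route as the paper's: the paper simply removes elements of $V$ that do not contribute to $gen(V)$ (justified by Lemma \ref{l1}) until an $R$-linearly independent generating subset remains, while you package the identical idea as a minimum-cardinality argument. Your version is in fact a more careful formalization of the paper's one-line sketch, and the key step --- that $v_i \in gen(W_0\setminus\{v_i\})$ forces $gen(W_0\setminus\{v_i\})=gen(W_0)$ via the last bullet of Lemma \ref{l1} --- is exactly the justification the paper intends.
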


\begin{proof} 
  Let $V= \{ v_1, \ldots , v_k\}$ such that $gen(V)=T$. Using Lemma \ref{l1},  keep removing elements from $V$ that do not contribute to  $gen(V)$ until we find a seed set for $T$. Note that the order in which we remove elements matters, in the sense that the seed sets we get at the end may have different sizes.
\end{proof}

\begin{lem}
Let $R$ be a finite nearfield and $V$  a finite set of vectors in $R^m$ such that $gen(V)=T$. Assume that the vectors in $V$ are arranged in a matrix $M$ of size $k \times m$. Then $s(T)$ is less than or equal to the  number of pivots that we  get in the reduced row echelon form of $M$.
\end{lem}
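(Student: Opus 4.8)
The plan is to produce an explicit seed set of size equal to the number of pivots; since $s(T)$ is by definition the minimum cardinality over all seed sets, exhibiting one such set of size $t$ (the number of pivots) immediately yields $s(T)\le t$.

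First I would run ordinary Gaussian elimination on $M$ to obtain $RREF(M)$, and denote its non-zero rows by $w_1,\ldots,w_t$, so that $t$ is exactly the number of pivots. By Lemma \ref{l1}, every elementary row operation leaves the $gen$ of the rows unchanged, hence $gen(w_1,\ldots,w_t)=gen(V)=T$; thus $\{w_1,\ldots,w_t\}$ generates $T$, and it remains only to verify that this set is $R$-linearly independent, which will make it a seed set.

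The heart of the argument is this independence. In reduced row echelon form each $w_i$ carries a pivot, say in column $c_i$, with $w_i^{c_i}=1$ while $w_j^{c_i}=0$ for every $j\neq i$. I would show that every vector lying in $gen(w_1,\ldots,\hat{w_i},\ldots,w_t)$ has a zero in coordinate $c_i$. For a single layer this is direct: any linear combination $\sum_{j\neq i}w_j\lambda_j$ has $c_i$-th coordinate $\sum_{j\neq i}(w_j^{c_i}\circ\lambda_j)=\sum_{j\neq i}(0\circ\lambda_j)=0$, using that $0\circ\lambda=0$ in a nearfield. Since this vanishing is preserved when one forms further linear combinations, an induction over the layers $LC_n$ combined with the description $gen=\bigcup_n LC_n$ from Theorem \ref{th1} shows that the $c_i$-th coordinate is zero throughout $gen(w_1,\ldots,\hat{w_i},\ldots,w_t)$. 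As $w_i^{c_i}=1\neq 0$, this forces $w_i\notin gen(w_1,\ldots,\hat{w_i},\ldots,w_t)$; running over all $i$ gives $R$-linear independence.

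Putting the two halves together, $\{w_1,\ldots,w_t\}$ is an $R$-linearly independent generating set for $T$, i.e.\ a seed set of cardinality $t$, whence $s(T)\le t$. The main obstacle is precisely the independence step: one must confirm that the pivot structure of the reduced row echelon form survives the non-linear closure operation $gen$, and the feature that rescues the argument is the nearfield identity $0\circ\lambda=0$, which keeps each pivot column isolated through every layer $LC_n$.
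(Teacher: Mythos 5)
Your proposal is correct and follows essentially the same route as the paper: pass to $RREF(M)$, observe via Lemma \ref{l1} that the non-zero rows $w_1,\ldots,w_t$ still generate $T$, note they are $R$-linearly independent and hence form a seed set, and conclude $s(T)\le t$. The only difference is that the paper simply asserts the $R$-linear independence of the RREF rows, whereas you actually prove it (via the pivot-column vanishing propagated through the layers $LC_n$ using $0\circ\lambda=0$), which makes your write-up strictly more complete.
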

\begin{proof}
Let $V$ be a seed set for $T$. Suppose $V$ contains the vectors $v_1,\ldots, v_k \in R^m$ arranged in a  matrix $M \in R^{k \times m}.$ The reduced row echelon form of $M$ gives another matrix $M' \in R^{k \times m}$ whose  set   of  non-zero row vectors is $W= \{w_1, \ldots, w_t \}$ where $t \leq k.$ By Lemma \ref{l1}, we have $T=gen(V)=gen(W)$ and  $W$ is $R$-linearly independent. So  $W$ is also a seed set for $T$. If $W$ is minimum sized then $s(T)$ is the  number of elements in $W$ which correspond to the number of pivots of $M'=RREF(M).$ Else $s(T)$  is less than the  number of elements in $W$.
\end{proof}

\subsection{Seed number of $R^m$}
%

Let $R$ be a nearfield and  $T$ an $R$-subgroup of $R^m$ where $m$ is a positive integer. By definition $s(T)$ is the minimum size of all seed sets of $T.$ In the theory of vector spaces, the dimension of a subspace is at most the dimension of the entire space. Similarly, the $R$-dimension of an $R$-subgroup of $R^m$ is at most the $R$-dimension of the entire space. Thus $s(T) \leq R$-$\dim(T) \leq m.$

In analogy to the theory of  vector spaces we have the following.

\begin{lem}
Let $v=(v_i)_{1 \leq i \leq m},w=(w_i)_{1 \leq i \leq m} \in R^m$. Suppose there exists $(v_j,w_j)=\rho(v_i,w_i)$ where $i \neq j$ and $\rho \in R.$ Then by elimination of one of the pairs $(v_i,w_i)$ or  $(v_j,w_j)$ from $v$ and $w$, we obtain the new vectors 
$v'= (v_1,\ldots,\hat{v_i},\ldots,v_{m}) \in R^{m-1}, w'= (w_1,\ldots,\hat{w_i},\ldots,w_{m}) \in R^{m-1}$ or $ v'= (v_1,\ldots,\hat{v_j},\ldots,v_{m}) \in R^{m-1},  w'= (w_1,\ldots,\hat{w_j},\ldots,w_{m}) \in R^{m-1} $ with  
 \begin{align*}
 R \mbox{-} \dim  \big ( gen(v,w) \big ) = R \mbox{-} \dim  \big  ( gen(v',w') \big ).
 \end{align*}
\label{lem}
\end{lem}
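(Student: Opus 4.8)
The plan is to realise the desired equality of $R$-dimensions as a consequence of an explicit $R$-module isomorphism $gen(v,w)\cong gen(v',w')$ induced by deleting a coordinate, and then to convert that isomorphism into an equality of $R$-dimensions via the cardinality count $|gen(v_1,\ldots,v_k)|=|R|^{k'}$ established earlier. Throughout I read the hypothesis $(v_j,w_j)=\rho(v_i,w_i)$ as the left-scalar relation $v_j=\rho v_i$ and $w_j=\rho w_i$; this is the reading for which the lemma is true, since it is the left distributive law of $R$ that makes the argument work.

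First I would prove the key coordinate invariant: every vector $x=(x_1,\ldots,x_m)\in gen(v,w)$ satisfies $x_j=\rho x_i$. This is an induction along the filtration $gen(v,w)=\bigcup_{n\ge 0}LC_n$ of Theorem \ref{th1}. The base case $LC_0=\{v,w\}$ is exactly the hypothesis. For the inductive step I verify that the property is preserved by the two operations that build $LC_{n+1}$ from $LC_n$: for a sum, $(x+y)_j=x_j+y_j=\rho x_i+\rho y_i=\rho(x_i+y_i)=\rho(x+y)_i$, which uses the left distributive law of $R$; for a right scalar multiple, $(x\lambda)_j=x_j\lambda=(\rho x_i)\lambda=\rho(x_i\lambda)=\rho(x\lambda)_i$, which uses associativity of the product. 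Hence the invariant propagates to every element of $gen(v,w)$.

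Next I consider the coordinate-deletion map $\pi\colon R^m\to R^{m-1}$ that removes the $j$-th entry. It is an $R$-module homomorphism (addition and right scalar multiplication are computed coordinatewise), so $\pi$ sends linear combinations to linear combinations and therefore $\pi\big(gen(v,w)\big)=gen(\pi(v),\pi(w))=gen(v',w')$, giving surjectivity onto the correct target. The invariant $x_j=\rho x_i$ gives injectivity of $\pi$ on $gen(v,w)$: if $\pi(x)=\pi(x')$ then $x$ and $x'$ agree in every retained coordinate, in particular $x_i=x'_i$ (the index $i$ is retained because $i\neq j$), and then $x_j=\rho x_i=\rho x'_i=x'_j$, so $x=x'$. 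Thus $\pi$ restricts to a bijection, and in particular $|gen(v,w)|=|gen(v',w')|$.

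Finally I invoke the preceding lemma $|gen(\cdots)|=|R|^{R\mbox{-}\dim}$: both sides are powers of the same base $|R|>1$, so equal cardinalities force $R\mbox{-}\dim\big(gen(v,w)\big)=R\mbox{-}\dim\big(gen(v',w')\big)$, which is the claim when the $j$-th pair is removed. To see that removing the $i$-th pair works equally well, note that if $\rho\neq 0$ the relation can be inverted to $x_i=\rho^{-1}x_j$ and the identical argument applies to the map deleting the $i$-th coordinate; if $\rho=0$ then $x_j=0$ for all $x\in gen(v,w)$, so deletion of the $j$-th coordinate is automatically injective and that branch already suffices. I expect the only genuine (and modest) obstacle to be the bookkeeping in the closure step of paragraph two: the invariant survives precisely because $\rho$ multiplies on the left, where distributivity holds, while the module scalars $\lambda$ act on the right, and one must keep these two sides cleanly separated to avoid a side error.
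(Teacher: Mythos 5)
Your proof is correct, and it rests on the same key observation as the paper's --- namely that the proportionality $x_j=\rho x_i$ of the two coordinates is preserved under all the operations that build $gen(v,w)$, precisely because $\rho$ acts on the left, where the distributive law of the nearfield holds --- but you package that observation quite differently. The paper's proof runs the eGe algorithm of Theorem \ref{th2} on the $2\times m$ matrix with rows $v,w$ and asserts that every elimination step performed on column $i$ automatically performs the proportional step on column $j$, so the number of rows produced at termination (which is the $R$-dimension by Definition \ref{df}) is unchanged by deleting one of the two columns. You instead prove the invariant once and for all on $gen(v,w)=\bigcup_n LC_n$ by induction along the filtration of Theorem \ref{th1}, use it to show that the coordinate-deletion homomorphism restricts to a bijection from $gen(v,w)$ onto $gen(v',w')$, and then convert equal cardinalities into equal $R$-dimensions via the counting lemma $\vert gen(\cdot)\vert=\vert R\vert^{k'}$. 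Your route is algorithm-free and arguably tighter: the paper's claim that the number of additional rows created is the same in the two runs of eGe is asserted rather than verified, whereas your bijection argument requires no bookkeeping about the algorithm's intermediate states. You also handle the degenerate case $\rho=0$ (where only deletion of the $j$-th pair is guaranteed to preserve the dimension, consistent with the ``or'' in the statement) and note that invertibility of $\rho$ makes the two deletions symmetric; the paper sidesteps this by assuming all the relevant entries lie in $R^*$.
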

\begin{proof}
Let $v=(v_i)_{1 \leq i \leq m},w=(w_i)_{1 \leq i \leq m} \in R^m$ arranged in a matrix 
\begin{align*}
V=\begin{bmatrix}
   v_1&v_2& v_3    & \ldots & v_m  \\
   w_1&w_2 & w_3   & \ldots & w_m \\
\end{bmatrix} \in R^{2 \times m}.
\end{align*}
Suppose for simplicity that $(v_2,w_2)= \rho (v_1,w_1)= (\rho v_1, \rho w_1) $ where $v_1,v_2,w_1,w_2, \rho \in R^*.$ We apply the eGe algorithm on the set $\{v,w \}$. Note that  eliminating the non-zero entries in the first column of $V$ will also automatically eliminate  the non-zero entries in the second column of $V$. Hence, the number of additional rows created  as we apply eGe algorithm on $\{v,w \}$ is the same  as the number of additional rows we will create when we apply the eGe on $\{v',w' \}$ where $v'=(v_2,\ldots,v_m) \in R^{m-1}, w'=(w_2,\ldots,w_m) \in R^{m-1}.$ 
\end{proof}

We now deduce the following:
\begin{thm} Let $R$ be a finite Dickson nearfield that arises from the Dickson pair $(q,n)$ and $T$ be an $R$-subgroup of $R^m$. If $s(T)=2$, then 
\begin{align*}
2\leq R\mbox{-}\dim (T) \leq  q^n +1.
\end{align*}	
\label{th}
\end{thm}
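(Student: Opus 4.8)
The plan is to prove the two inequalities separately, handling the lower bound in one line and devoting the real work to the upper bound. For the lower bound I would simply combine the definition $s(T)=\min_{V\in\mathcal B}|V|$ with the lemma expressing $R\mbox{-}\dim(T)=\max_{V\in\mathcal B}|V|$, where $\mathcal B$ is the (nonempty) set of seed sets of $T$. A maximum over a nonempty set is at least the corresponding minimum, so $R\mbox{-}\dim(T)\ge s(T)=2$ is immediate.

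For the upper bound I would exploit the hypothesis $s(T)=2$ to fix a seed set $\{v_1,v_2\}$ with $gen(v_1,v_2)=T$, arrange $v_1,v_2$ as the two rows of a matrix $V\in R^{2\times m}$, and read its columns as pairs $(v_1^l,v_2^l)\in R^2$ for $1\le l\le m$. The crucial device is Lemma \ref{lem}: whenever one column is a left $R^*$-multiple of another, one of the two may be deleted without changing the $R$-dimension of the generated $R$-subgroup. Iterating this reduction, while discarding the zero columns (which can never carry a pivot), yields a matrix whose surviving nonzero columns are pairwise non-proportional, all the while preserving $R\mbox{-}\dim(T)$.

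The heart of the matter is then a counting step. Let $R^*=(R\setminus\{0\},\circ)$ act on $R^2\setminus\{(0,0)\}$ by $\rho\cdot(a,b)=(\rho\circ a,\rho\circ b)$; associativity of $\circ$ makes this a group action, and cancellation in the group $R^*$ shows it is free on nonzero pairs, so every orbit has size $|R^*|=q^n-1$. Hence the number of orbits is $(q^{2n}-1)/(q^n-1)=q^n+1$. Pairwise non-proportional nonzero columns lie in distinct orbits, so after the reduction at most $q^n+1$ columns remain. Since the final matrix produced by the eGe algorithm (Theorem \ref{th2}) has at most one nonzero entry per column and each of its $R\mbox{-}\dim(T)$ rows carries a pivot in its own column, $R\mbox{-}\dim(T)$ is bounded by the number of surviving columns, giving $R\mbox{-}\dim(T)\le q^n+1$.

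The main obstacle I anticipate is the bookkeeping surrounding Lemma \ref{lem}: one must confirm that the proportionality in that lemma is precisely left multiplication by an element of $R^*$ (so that the orbit picture applies verbatim), verify that the action is genuinely free on the nonzero pairs so that every orbit attains the full size $q^n-1$, and check that deleting zero columns and collapsing proportional ones leaves $R\mbox{-}\dim(T)$ unchanged at each step. Once the free action and the resulting orbit count $q^n+1$ are secured, combining this upper bound with the trivial lower bound $R\mbox{-}\dim(T)\ge s(T)=2$ completes the proof.
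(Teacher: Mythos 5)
Your argument is correct and follows essentially the same route as the paper: fix a two-element seed set, reduce the columns of the $2\times m$ matrix modulo left proportionality using Lemma \ref{lem}, and bound $R\mbox{-}\dim(T)$ by the number of surviving columns. Your orbit count $(q^{2n}-1)/(q^n-1)=q^n+1$ under the free action of $(R^*,\circ)$ is just a repackaging of the paper's explicit list of representatives $(1,0),(0,1),(1,r)$ with $r\in R^*$, and your one-line lower bound via $\max \geq \min$ over seed sets matches the paper's earlier lemma.
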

\begin{proof}
Suppose  $T$ is an $R$-subgroup of $ R^m$ and $s(T)=2$. Then there exist  minimal generators of $T$ denoted as $v_1=(v_1^1,\ldots,v_1^m), v_2=(v_2^1,\ldots,v_2^m)$, i.e., $gen(v_1,v_2)=T$ where $v_1$ and $v_2$ are arranged in a matrix $V \in R^{2 \times m}.$ 

Consider the pairs $(v_1^i,v_2^i)$ with $v_1^i \neq0 \neq v_2^i$ where $1 \leq i \leq m$. Assume there are two such pairs such that
\begin{align}
v_1^j= \rho v_1^i, \thickspace v_2^j= \rho v_2^i \thickspace \mbox{for} \thickspace  i \neq j \thickspace \mbox{ and} \thickspace  \rho \in R.
 \label{no1}
\end{align}
 Note that we may think of $(v_1^j,v_2^j)$ as a multiple of $(v_1^i,v_2^i)$. For simplicity, let  $i=1,j=2$.
Thus we can write $v_1^2= \rho v_1^1$ and $v_2^2=\rho v_2^1$.

Now the expanded Gaussian elimination algorithm (in the proof of Theorem \ref{th2}) returns a seed set $ \{ u_1,\ldots,u_k \}$ (where $k=R$-$\dim(T)$) for $T$ such that every column of the matrix with rows $u_1,\ldots,u_k$ has at most one non-zero entry.
In this process, we take $R$-linear combinations of the rows to form new rows, e.g., suppose we have $\alpha,\beta,\lambda \in R$ and we consider the vector $z_1=(v_1\alpha + v_2 \beta)\lambda$. So $z_1 \in LC_2(v_1,v_2)$.
Then $z_1^1=(v_1^1 \alpha + v_2^1 \beta)\lambda$ and	
\[
z_1^2=(v_1^2 \alpha + v_2^2 \beta)\lambda=(\rho v_1^1\alpha +  \rho v_2^1 \beta)\lambda\\
=\rho (v_1^1 \alpha + v_2^1 \beta)\lambda= \rho z_1^1.
\] 

Thus every vector created in this manner satisfies $z_1^2= \rho z_1^1$. In fact for $z_1 \in LC_m(v_1,v_2)$ where $m$ is a positive integer, we will still have that $z_1^2=\rho z_1^1$.  At the end of the expanded Gaussian elimination,  $u_1$ is an  $R$-linear combination of $v_1$ and $v_2$, so we will have $u_1^1=1, u_1^2=\rho $ where the column is indicated by the super-script.

 Furthermore, if $v_1^i=0$ or $v_2^i=0$ then the pair $(v_1^i,v_2^i)=(0,v_2^i)=v_2^i(0,1)$ or $(v_1^i,v_2^i)=(v_1^i,0)=v_1^i(1,0).$ Else $v_1^i \neq 0 \neq v_2^i$ then $(v_1^i,v_2^i)=v_1^i \big (1, (v_1^i)^{-1}v_2^i \big )=v_1^i(1,r)$ where $r=(v_1^i)^{-1}v_2^i.$ So we can write any pair as a multiple of one of the following pairs : $(1,0),(0,1)$ or $(1,r)$ for $r \in R^*$ (which can't be  expressed as  multiples of other pairs in $R^2$). For example, suppose $(\rho_1,0)=\rho_1(1,0), (0,\rho_3)=\rho_3(0,1)$ and $(\rho_2,\rho_4)=\rho_2(1,r)$ where $r=\rho_2^{-1}\rho_4,\rho_1,\rho_2,\rho_3,\rho_4 \in R^*$. Then
\begin{align*}
 R \mbox{-}\dim  \big (  gen((\rho_1,0,\rho_2), (0,\rho_3,\rho_4)) \big ) = R \mbox{-} \dim \big (  gen((1,0,1), (0,1,r))\big ).
\end{align*}
  By Lemma \ref{lem}, for any two pairs $(v_1^i,v_2^i)$ and $(v_1^j,v_2^j)$  for $i \neq j$ satisfying the above condition  (\ref{no1}), we may eliminate one of them without changing the $R$-dimension of the $R$-subgroup generated by $v_1,v_2$. Hence the  $R$-dimension of $T$  cannot exceed the maximal number  of pairs  where we can not eliminate any other pairs of the form  $(1,0),(0,1)$ or $(1,r)$ for $r \in R^*$. Thus 
\begin{align*}
\max_{v_1,v_2 \in R^m}  \left \{ R \mbox{-}\dim \big (gen(v_1,v_2) \big )  \right \}=2+(|R|-1)=1 + |R|=q^n + 1.
\end{align*}

%

\end{proof}

\begin{rem}
Let $T$ be an $R$-subgroup of $R^m$ for a positive integer $m.$ Assume that $s(T)=2$. By Theorem \ref{th} the $R$-dimension of $T$ is restricted to a specific range such that $s(T) \leq R$-$\dim(T) \leq |R|+1.$  
\end{rem}
\begin{exa} 
\label{ex2} As in (\cite{pilz2011near}, p.257), let us consider the finite field $(\mathbb{F}_{3^{2}}, +, \cdot) $ with
\[\mathbb{F}_{3^{2}} := \{0,1,2,x,1+x,2+x,2x,1+2x,2+2x\},\]
where $x$ is a zero of $X^{2}+1 \in \Bbb{Z}_{3}[X]$ with the new multiplication defined as

$$
a \circ b := \left\{\begin{array}{cc}
              a \cdot b     & \mbox{ if $a$  is a square in ($\mathbb{F}_{3^{2}}$, $+$, $\cdot$)}\\
              a \cdot b^3 & \mbox{ otherwise }
              \end{array}
\right.
$$
This gives the smallest finite  Dickson nearfield $R=DN_g(3,2)$ which is not a field  (here $\mathbb{F}_{3^{2}}^* =\big < g \big > $ and $H =\big < g^2 \big >$).
Let $v=(x + 2, x + 1, 1, 1, 2x + 1, x + 1, 2, 0, 2x + 1, 2x + 2, x + 2,
        2x, 2x + 1, 2x + 2, x + 1, x + 2, 2x + 1, 2x + 2, x + 1)$ and $w=(2x, 2, 1, 2x + 2, 0, 2x, 2, 2x, 2x + 1, x + 1, 0, 2x + 1, 1,
        x + 1, 2x + 2, 2x + 2, 2x + 2, 2x + 2, x + 1) \in R^{19}$. Then  $T=gen(v,w)$ is an $R$-subgroup of $R^{19}$ and
we have $R$-$ \dim (T)=10$.
\end{exa}
\begin{rem}
In contrast to the theory of vector spaces, the following does not always hold:
$
R\mbox{-}\dim \big ( gen(v_1, \ldots, v_k) \big ) = R\mbox{-}\dim \big ( gen(w_1, \ldots, w_m) \big )$ where  a matrix $V \in R ^{k \times m}$ contains the rows $v_1, \ldots, v_k$  and the columns $w_1, \ldots, w_m$. To see this, suppose $R \in DN(3,2)$ (from Example \ref{ex2}). Let $v_1=(1,2,x,0,0), v_2=(0,0,0,1,0),v_3=(1,0,0,0,1) $ and $w_1=\begin{bmatrix}
1\\
0\\
1
\end{bmatrix}, w_2=\begin{bmatrix}
2 \\
0\\
0
\end{bmatrix}, w_3=\begin{bmatrix}
x \\
0\\
0
\end{bmatrix},w_4=\begin{bmatrix}
0\\
1\\
0
\end{bmatrix}, w_5=\begin{bmatrix}
0\\
0\\
1
\end{bmatrix}.$ In fact $R$-$\dim \big ( gen(v_1,v_2,v_3)\big )=4$ but $R$-$\dim \big ( gen(w_1,w_2,w_3,w_4,w_5)\big )=3.$
\end{rem}

In the next  theorem we shall determine the seed number of the finite dimensional Beidleman near-vector space $R^m$ where $R$ is a finite Dickson nearfield. This is accomplished by finding   two  vectors $v,w \in R^m$ such that $gen(v,w)=R^m$ for some positive integer $m$ such that $m \leq |R|+1.$ The result is the converse of Theorem \ref{th}.
\begin{thm} Let $R$ be a finite Dickson nearfield that arises from the Dickson pair $(q,n)$.
For every value $m$ satisfying $ 2\leq m\leq q^n+1,$ we have $s(R^m)=2.$ 
\end{thm}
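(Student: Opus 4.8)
The plan is to prove the two bounds $s(R^m) \geq 2$ and $s(R^m) \leq 2$ separately; since the statement is the converse of Theorem \ref{th}, the real task is to realize the maximal $R$-dimension $m$ by a single pair of vectors. The lower bound is immediate: a single vector $u \in R^m$ generates $gen(u) = uR$, an $R$-subgroup of $R$-dimension at most $1$, which cannot equal $R^m$ once $m \geq 2$; hence every seed set has at least two elements and $s(R^m) \geq 2$. For the reverse inequality it suffices to produce $v, w \in R^m$ with $gen(v,w) = R^m$. Here I would use that (for finite $R$) an $R$-subgroup $T \subseteq R^m$ satisfies $|T| = |R|^{R\mbox{-}\dim(T)}$, so $R\mbox{-}\dim(gen(v,w)) = m$ already forces $|gen(v,w)| = |R|^m = |R^m|$ and hence $gen(v,w) = R^m$; the whole problem thus reduces to exhibiting two vectors whose eGe output has exactly $m$ nonzero rows.

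For the construction I would recall, from the proof of Theorem \ref{th}, that every column pair $(v^i, w^i)$ is a nonzero scalar multiple of one of the $q^n + 1$ types $(1,0)$, $(0,1)$, or $(1,r)$ with $r \in R^*$. Since $m \leq q^n + 1$, there are enough types to fill $m$ columns with distinct ones: set
\[ v = (1, 0, 1, 1, \ldots, 1), \qquad w = (0, 1, r_3, r_4, \ldots, r_m), \]
where $r_3, \ldots, r_m$ are $m - 2$ distinct elements of $R^*$ (possible exactly because $m - 2 \leq q^n - 1$). Reading the first two coordinates shows that neither of $v, w$ lies in the $R$-multiples of the other, so $\{v, w\}$ is $R$-linearly independent; thus once we prove $gen(v,w) = R^m$, the set $\{v, w\}$ is a seed set of size $2$ and $s(R^m) = 2$.

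To compute the $R$-dimension I would run the eGe algorithm of Theorem \ref{th2} on the $2 \times m$ matrix with rows $v$ and $w$. Columns $1$ and $2$ are already single-entry pivot columns. For each remaining column $j$, which carries the two nonzero entries of type $(1, r_j)$, the \emph{distributivity trick} applies: choosing $(\alpha, \beta, \lambda)$ with $\lambda \notin D(\alpha, \beta)$ and setting $\alpha' = \alpha$, $\beta' = r_j^{-1}\beta$ produces a new row $\theta$ with $\theta^1 = \theta^2 = 0$ and $\theta^j = (\alpha + \beta)\lambda - \alpha\lambda - \beta\lambda \neq 0$, from which a fresh pivot in column $j$ is created and column $j$ is cleared from the descendants of $v$ and $w$. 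Counting one pivot for each of the two coordinate columns and one for each of the $m - 2$ defect columns gives $2 + (m-2) = m$ nonzero rows, hence $R\mbox{-}\dim(gen(v,w)) = m$ and $gen(v,w) = R^m$.

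The main obstacle is the lower bound hidden in this count, namely that the $m - 2$ defect columns each contribute a genuinely new pivot rather than collapsing against one another. This is precisely the converse of the type-counting in Theorem \ref{th}: there, Lemma \ref{lem} lets one merge columns of equal type to obtain $R\mbox{-}\dim \leq$ (number of distinct types present), whereas here one must show that distinct types cannot be merged, so the number of pivots equals $m$. The delicate bookkeeping is that after column $3$ is split, the modified rows no longer present column $4$ in the clean form $(1, r_4)$, so one must verify that each later column still carries a nonzero entry in some current row at the moment it is processed and that its pivot has not already been produced; organizing the argument by processing columns left to right and showing that the row count strictly increases at each defect column is where the essential work lies.
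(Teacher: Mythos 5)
Your proposal is essentially the paper's own proof: the same pair $v=(1,0,1,\ldots,1)$, $w=(0,1,r_3,\ldots,r_m)$ with pairwise non-proportional column pairs, the same run of the eGe algorithm creating one new pivot per remaining column, and the same count $2+(m-2)=m$ forcing $gen(v,w)=R^m$ and hence $s(R^m)=2$. The verification you flag as outstanding --- that each later column still carries at least two non-zero entries among the current rows when it is reached, so the row count genuinely increases $m-2$ times rather than collapsing --- is precisely the step the paper also passes over with the bare assertion that ``the process will stop after creating exactly $m-2$ new rows,'' so your write-up is no less complete than the published argument; one small point in your favour is that taking the $r_j$ to be distinct elements of $R^*$ (rather than of $R^*\setminus\{1\}$ as the paper insists) is what is actually needed to reach $m=q^n+1$, since $R^*\setminus\{1\}$ has only $q^n-2$ elements.
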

\begin{proof}For $2 \leq m \leq |R|+1$, we choose $v=(1,0,1, \ldots,1)$ and $w= (0,1,w^3,\ldots, w^m) \in R^m$ arranged in a matrix 
\begin{align*}
V=\begin{bmatrix}
   1&0& 1    & \ldots & 1  \\
   0&1 & w^3   & \ldots & w^m \\
\end{bmatrix} \in R^{2 \times m},
\end{align*}
 where each element $w^j \neq 1$ for $j \in \{3, \ldots ,m \}$ is a non-zero distinct element (i.e, $w^j \in R^*\smallsetminus \{1\}$). Note that all the  pairs   
 \begin{align*}
 (1,0), (0,1), (1,w^3), \ldots, (1,w^j), \ldots, (1,w^m)
 \end{align*}
satisfy  the following condition
\begin{align}
 \thickspace (v_j,w_j) \neq \alpha (v_i,w_i) \thickspace \mbox{for all} \thickspace i,j \thickspace \mbox{where} \thickspace i \neq j \thickspace \mbox{and} \thickspace \alpha \in R^*.
 \label{no}
\end{align}

 Thus by implementing the  eGe algorithm (the explicit  procedure in the proof of Theorem \ref{th2}) on the initial matrix $V$, we create at least $m-2$ additional rows (since the first two columns each have exactly one non-zero entry). Hence we may apply the \say{\textsl{distributivity trick}} on the third column  and so on.  Note that the process will stop after creating exactly $m-2$ new rows so that at the end of eGe, we will get in total $m$ rows so that $gen(v,w)= \bigoplus_{i=1}^me_i R =R^m$ where $\{e_i\}_{i=1,\ldots,m }$ is the standard basis. Thus $s(R^m)=2.$
\end{proof}
\begin{exa}
Let $R \in DN(3,2)$. There exist $v=(1,2x+2,x,0,x)$ and $w= (2,2x,1,2,x) \in R^5$ such that $gen(v,w)=R^5.$ Thus  $s(R^5)=2$. 
\end{exa}

We have seen that it takes two vectors that belong to   $R^m$ under the condition that $m \leq |R| +1$ to generate the whole space.

%
%
%
%
%

\vspace{10mm}

\section{Concluding comments}

Let $R$ be a finite Dickson nearfield that arises from the Dickson pair $(q,n)$.
We have determined $D(\alpha, \beta)$ for a given $(\alpha,\beta) \in R^2$ and  showed that it is used in the determination of $s(R^m)$ by the implementation of the eGe algorithm. Some computational methods have  been  implemented in Sage. Note that the proof of Theorem \ref{th2} taken from \cite{djagbahowell18} was included to illustrate to the readers the eGe algorithm and the motivation of the generailised distributive set.  In contrast to the situation for $D(R)$ by the work of Zemmer in \cite{zemmer1964} (see Theorem \ref{tp}) we found that $D(\alpha, \beta)$ is not always a subnearfield of $R$. It shouldn't be too hard to use the characterization of the $7$ exceptional finite nearfields to determine their generalized distributive sets. Furthermore if $n>2$ and $\alpha, \beta, \alpha + \beta$  belong all to distinct $H$-cosets then  the multiplicative center of $D(\alpha, \beta)$ can't be characterized since $D(\alpha, \beta)$ is not always a multiplicative subgroup of $R$. It seems appropriate to close with further problems on the generalized distributive set and seed number.
\begin{itemize}
\item Let $(q,n)$ be a Dickson pair with $q=p^l$ for some prime $p$ and integers $l,n$  such that $n >2.$  Let $g$ be a generator of $\mathbb{F}_{q^n}^*$ and  $R$ the nearfield constructed with $H = \big < g^n \big >.$ Let  $ (\alpha, \beta , \alpha + \beta )\in g^{\frac{q^r-1}{q-1}}H \times g^{\frac{q^s-1}{q-1}}H \times g^{\frac{q^t-1}{q-1}}H$ such that $r,s, t$ are all distinct. Can we find a condition on  $(q,n)$ such that $D(\alpha,\beta)$ is  always a subfield of $\mathbb{F}_{q^n}$?

\item  Can we find a necessary and sufficient condition on the Dickson pair $(q,n)$ such that $D(\alpha,\beta)$ is always  a subnearfield of $DN_g(q,n)?$

\item  Let $T$ be an $R$-subgroup of $R^m.$ In the general setting if $s(T)=k$ what are the possible $R$-dimensions for $T$?
\end{itemize}

\section{Acknowledgments}
I thank Dr Gareth Boxall and  Dr Karin-Therese Howell for their advice  on this work. I thank as well Georg Anegg for his collaborations. I  worked on this paper while studying toward my PhD at Stellenbosch University.

\section{Funding}
 I am  grateful  for  funding  by  AIMS (South  Africa) and DAAD. This work is based on the research supported in part by the National Research Foundation of South Africa (Grant Numbers $93050,96234$). 
\section{Appendix}
Let $R $ be a finite Dickson nearfield that arises from the Dickson pair $(q,n)$. In this section we give more details about the pseudo-code that we have implemented in Sage for some tests on $D(\alpha,\beta)$ for a given $(\alpha, \beta) \in R^2$. $DSS$ (Distributive Set  Subfields) tests if $D(\alpha, \beta)$, for a given pair $(\alpha, \beta) \in g^{[r]_q}H \times g^{[s]_q}H $ for some positive integers $r,s$, can be  considered  as  a subfield of $\mathbb{F}_{q^n}$. The algorithm is described as follows:

\begin{algo}~\\

\textbf{Step 1: Dickson pair}

\noindent\rule[0.5ex]{\linewidth}{0.5pt}

 We define the \textbf{function isDicksonpair}$(q,n)$. Note that isDicksonpair$(q,n)$ returns True if $(q,n)$ is Dickson pair, false if it is not and list all Dickson pairs $(p^l,n)$ such that $(p,n,l) \in (1,x) \times (1,y) \times (1,z)$ where $x,y$ and $z$  are three positive integers.

\noindent\rule[0.5ex]{\linewidth}{0.5pt}
%
%
%

 Denote by $H$ the subgroup of $\mathbb{F}_{q^n}^*$ generated by $g^n$. Each non-zero element $\alpha $ of $\mathbb{F}_{q^n}^*$ is then in an $H$-coset of the form $g^{\frac{q^j-1}{q-1}}H$. The next function computes the value of $j$ in $\{0,1,...,n-1\}$ for $\alpha \in g^{\frac{q^j-1}{q-1}}H $.

\textbf{Step 2: Index of $H$-coset}

\noindent\rule[0.5ex]{\linewidth}{0.5pt}

\textbf{Input:} An element $\alpha $ in $\mathbb{F}_{q^n}^*.$ 

\textbf{Output:} The index $j$ of $\alpha$ if it exists, false if it doesn't.

\noindent\rule[0.5ex]{\linewidth}{0.5pt}

define \textbf{function isDicksonpair}$(q,n)$
\begin{enumerate}
\item $\mathbb{F}_{q^n}^*= \big <g \big >$
\item define \textbf{function associatedcoset}$(\alpha)$
\item \quad \textbf{if} $\alpha \neq 0$
\item \quad \quad $idx= \log(\alpha,g)$  (return the logarithm of $\alpha$ to the base of $g$).
\item \quad \textbf{else} \textbf{return} False
\item \quad$idx=idx \mod (n)$ (since $H=<g^n> $ we only care about $idx$ modulo $n$).
\item \quad \textbf{for} $j \in \{ 0,\ldots,n \}$ \textbf{do}
\item \quad \quad $k =\frac{q^j-1}{q-1}$
\item \quad \quad $k=k \mod (n)$
\item \quad \quad \textbf{if} $idx=k$ \textbf{then}
\item \quad  \quad \quad \textbf{return} $j$ 
\end{enumerate}

\textbf{Step 3: Matrix basis}

\noindent\rule[0.5ex]{\linewidth}{0.5pt}

\textbf{Input:} $\alpha,\beta \in \mathbb{F}_{q^n}.$

\textbf{Output:} Basis for the space of solutions $\lambda$ to the equation $\phi(\lambda)=0$. The idea is to perform this test on various randomly chosen $\alpha,\beta \in \mathbb{F}_{q^n}^*$.
 
\noindent\rule[0.5ex]{\linewidth}{0.5pt}

\begin{enumerate}
\item define \textbf{function }$\phi(\alpha,\beta,\lambda,r,s,t)$
\item \quad \textbf{return} $ \alpha(\lambda^{q^t}-\lambda^{q^r}) - \beta(\lambda^{q^s}-\lambda^{q^t})$
\item  define  \textbf{function performtest}$(\alpha,\beta)$
\item \quad $r= associatedcoset(\alpha)$
\item \quad $s= associatedcoset(\beta)$
\item \quad $t= associatedcoset(\alpha+\beta)$
\item \quad \textbf{if} $r=s$ or $r=t$ or $s=t$ \textbf{then}
\item \quad \quad \textbf{return} " $D(\alpha,\beta)$ is finite field"
\item \quad \textbf{else}
\item \quad \quad \textbf{for} $j \in \{ 0,\ldots,n-1 \}$ \textbf{do}
\item \quad \quad \quad $M \longleftarrow $ matrix associated to $\phi(\alpha,\beta,g^j,r,s,t)$
\item \quad \quad \quad $K \longleftarrow $ kernel $(M)$
\item \quad \quad \textbf{if} $\dim (K)>1$ \textbf{then}
\item \quad \quad \quad \textbf{return} $K$

\item \textbf{if} isDicksonpair$(q,n)$ \textbf{then}
\item \quad \textbf{for} $r_1 \in \{1,\ldots,q^n -1 \}$  \textbf{do}
\item \quad \quad \textbf{for} $r_2 \in \{1,\ldots,q^n -1 \}$  \textbf{do}
\item \quad \quad \quad $\alpha \leftarrow g^{r_1}$
\item \quad \quad \quad $\beta  \leftarrow g^{r_2}$
\item \quad \quad \quad performtest$(\alpha,\beta)$ 
\end{enumerate}

\textbf{Step 4: Row vector to field element}

\noindent\rule[0.5ex]{\linewidth}{0.5pt}

\textbf{Input:} A row vector from the  matrix basis $K$ of $D(\alpha,\beta)$ over $\mathbb{F}_q.$

\textbf{Output:} Field element of $D(\alpha,\beta)$.
 
\noindent\rule[0.5ex]{\linewidth}{0.5pt}

\begin{enumerate}
\item define \textbf{function rowvectortofieldelement}$(v)$
\item \quad $a \longleftarrow 0$
\item \quad $k \longleftarrow$ number of column in $v$  (length $(v)$).
\item \quad \textbf{for} $i \in \{ 1,\ldots k \}$
\item \quad  \quad $a \longleftarrow \sum_{i=1}^{k} v[i]* g^{i-1}$
\item \quad \textbf{return} a
\end{enumerate}

\textbf{Step 5: Field test}

\begin{enumerate}
\item define \textbf{function isfield}($K$)
\item \quad \textbf{if} $\dim (K)=1 $ \textbf{then}
\item \quad \quad \textbf{return} True
\item \quad \textbf{else}
\item \quad \quad $B \longleftarrow $ set  constituted by rows vectors in $K$.
\item \quad \quad \textbf{for} $i \in B$ \textbf{do}
\item \quad \quad \quad \textbf{for} $j \in B$ \textbf{do}
\item \quad \quad \quad \quad $b_1\leftarrow rowvectortofieldelement(i) $
\item \quad \quad \quad \quad $b_2 \leftarrow rowvectortofieldelement(j) $
\item \quad \quad \quad \quad $b \leftarrow b_1b_2^{-1}$
\item \quad \quad \quad \quad \textbf{if} row vector$(b) \notin K$ \textbf{then}
\item \quad \quad \quad \quad \quad \textbf{return} False
\end{enumerate}

\noindent\rule[0.5ex]{\linewidth}{0.5pt}

\textbf{Input:} $K=Kernel(M).$  

\textbf{Output:}  $D(\alpha,\beta)$ is a finite field or not.
 
\noindent\rule[0.5ex]{\linewidth}{0.5pt}

\begin{enumerate}
\item \textbf{if} isfield(K) \textbf{then}
\item \quad \textbf{write} "$D(\alpha, \beta)$ is a finite field".
\item \textbf{else}
\item  \quad \textbf{write} "$D(\alpha, \beta)$ is not a finite field".
\end{enumerate}
\label{algo}
\end{algo}
%
%
%

\end{document}